\DeclareRobustCommand{\arr}{%
 \mathrel{\mathpalette\short@to\relax}%
}
\newcommand{\short@to}[2]{%
  \mkern2mu
  \clipbox{{.3\width} 0 0 0}{$\m@th#1\vphantom{+}{\shortrightarrow}$}%
  }
\def\wt#1{\tilde{#1}}
\theoremstyle{plain}
    \newtheorem{theorem}{Theorem}[section]
    \newtheorem{proposition}[theorem]{Proposition}
    \newtheorem{lemma}[theorem]{Lemma}
    \newtheorem{corollary}[theorem]{Corollary}
\theoremstyle{definition}
    \newtheorem{definition}[theorem]{Definition}
    \newtheorem{example}[theorem]{Example}
    \newtheorem{remark}[theorem]{Remark}
\def\Alphabet{A,B,C,D,E,F,G,H,I,J,K,L,M,N,O,P,Q,R,S,T,U,V,W,X,Y,Z}
\def\grabet{a,b,c,d,e,f,g,h,i,j,k,l,m,n,o,p,q,r,s,t,u,v,w,x,y,z}
\def\endpiece{xxx}
\def\makeAlphabet[#1]{\expandafter\makeA#1,xxx,}
\def\makealphabet[#1]{\expandafter\makea#1,xxx,}
\def\makeA#1,{\def\temp{#1}\ifx\temp\endpiece\else%
\mkbb{#1}\mkfrak{#1}\mkbf{#1}\mkcal{#1}\mkscr{#1}\mkbs{#1}\expandafter\makeA\fi}%
\def\makea#1,{\def\temp{#1}\ifx\temp\endpiece\else\mkfrak{#1}\mkbf{#1}\mkbs{#1}\expandafter\makea\fi}%
\def\mkbb#1{\expandafter\def\csname bb#1\endcsname{\mathbb{#1}}}
\def\mkfrak#1{\expandafter\def\csname fr#1\endcsname{\mathfrak{#1}}}
\def\mkbf#1{\expandafter\def\csname b#1\endcsname{\mathbf{#1}}}
\def\mkcal#1{\expandafter\def\csname c#1\endcsname{\mathcal{#1}}}
\def\mkscr#1{\expandafter\def\csname s#1\endcsname{\mathscr{#1}}}
\def\mkbs#1{\expandafter\def\csname bs#1\endcsname{{\boldsymbol{#1}}}}
\def\makeop[#1]{\xmakeop#1,xxx,}
\def\mkop#1{\expandafter\def\csname #1\endcsname{{\mathrm{#1}}}} %
\def\xmakeop#1,{\def\temp{#1}\ifx\temp\endpiece\else\mkop{#1}\expandafter\xmakeop\fi}%
\def\makeup[#1]{\xmakeup#1,xxx,}
\def\mkup#1{\expandafter\def\csname #1\endcsname{{\mathrm{#1}\,}}} %
\def\xmakeup#1,{\def\temp{#1}\ifx\temp\endpiece\else\mkup{#1}\expandafter\xmakeup\fi}%
\def\e{\eta}
\def\Z{\bbZ}
\def\R{\bbR}
\def\abs#1{|#1|}
\def\Dabs#1{\bigl|\kern-0.3mm\bigl|#1\bigr|\kern-0.3mm\bigr|}
\def\C{\operatorname{Consv}^\phi}
\def\Cd{\operatorname{Consv}^{\phi'}}
\def\Ch{\operatorname{Consv}^{\hat\phi}}
\def\Cwt{\operatorname{Consv}^{\wt\phi}}
\def\vp{\varphi}
\def\s{{\boldsymbol{s}}}
\def\cp{{c_\phi}}
\def\ms{{\operatorname{ms}}}
\def\LGE{{\operatorname{LGE}}}
\def\EX{{\operatorname{ex}}}
\def\lra{\leftrightarrow}
\def\lrs{\leftrightsquigarrow}
\def\phioEX{\phi_{\text{$1$-$\EX$}}}
\def\phitwEX{\phi_{\text{$2$-$\EX$}}}
\def\phitEX{\phi_{\text{$3$-$\EX$}}}
\def\phikEX{\phi_{\text{$\kappa$-$\EX$}}}
\def\phikLGE{\phi_{\text{$\kappa$-$\LGE$}}}
\def\phitwLGE{\phi_{\text{$2$-$\LGE$}}}
\def\phitLGE{\phi_{\text{$3$-$\LGE$}}}
\def\phikmEX{\phi_{\text{$(\kappa-1)$-$\EX$}}}
\def\phiEX{\phi_\EX}
\begin{document}

\setcounter{tocdepth}{2}

\title[Interactions]{On Interactions for Large Scale Interacting Systems}
\author[Bannai]{Kenichi Bannai}\email{bannai@math.keio.ac.jp}
\author[Koriki]{Jun Koriki}
\author[Sasada]{Makiko Sasada}
\author[Wachi]{Hidetada Wachi}
\author[Yamamoto]{Shuji Yamamoto}
\thanks{This work is supported by JST CREST Grant Number JPMJCR1913 and KAKENHI 18H05233, 24K21515}
\address[Bannai, Koriki, Wachi, Yamamoto]{Department of Mathematics, Faculty of Science and Technology, Keio University, 3-14-1 Hiyoshi, Kouhoku-ku, Yokohama 223-8522, Japan.}
\address[Sasada]{Department of Mathematics, University of Tokyo, 3-8-1 Komaba, Meguro-ku, Tokyo 153-0041, Japan.}
\address[Bannai, Sasada, Wachi, Yamamoto]{Mathematical Science Team, RIKEN Center for Advanced Intelligence Project (AIP),1-4-1 Nihonbashi, Chuo-ku, Tokyo 103-0027, Japan.}

\date{\today}
\begin{abstract}
	Statistical mechanics explains the properties of macroscopic phenomena based on the movements of 
	microscopic particles such as atoms and molecules.
	Movements of microscopic particles can be represented by
	large-scale interacting systems.  In this article, we systematically study combinatorial objects which we call
	\emph{interactions}, given as symmetric directed graphs representing the possible transitions of states
	on adjacent sites of large-scale interacting systems.
	Such interactions underlie various standard stochastic processes 
	such as the \emph{exclusion processes}, 
	\emph{generalized exclusion processes},
	\emph{multi-species exclusion processes},
	\emph{lattice-gas with energy processes}, and the 
	\emph{multi-lane particle processes}.
	We introduce the notion of equivalences of interactions
	using their space of conserved quantities.
	This allows for the classification of interactions
	reflecting the expected macroscopic properties.
	In particular, we prove that when the set of local states 
	consists of \emph{two}, \emph{three} or \emph{four} elements,
	then the number of equivalence classes of separable interactions 
	are respectively \emph{one}, \emph{two} and \emph{five}.
	We also define the wedge sums and box products of interactions,
	 which give systematic methods for constructing new interactions from existing ones.
	Furthermore,  we prove that the irreducibly quantified condition for interactions,
	which implicitly plays an important role in the theory of hydrodynamic limits,
	is preserved by wedge sums and box products.
	Our results provide a systematic method to construct and classify interactions,
	offering abundant examples suitable for considering hydrodynamic limits.
\end{abstract}

\subjclass[2020]{Primary: 82C22, Secondary: 05C63} 
\maketitle

%
%
%
\section{Introduction}\label{sec: introduction}
%
%
%

Statistical mechanics explains the properties of macroscopic phenomena based on the movements of 
microscopic particles such as atoms and molecules. For a derivation of macroscopic evolution equations from microscopic large-scale interacting particle systems, the theory of hydrodynamic limits, which relies on 
limit theorems of probability theory, provides the mathematical foundation. 
That is, if we assume that the movement of the particles are sufficiently ergodic, then 
it should be possible to obtain macroscopic behavior of the system 
via the limit theorems without knowing the exact states of each particle.
In this article, we study \emph{interactions}, which governs the movement of particles in the microscopic interacting particle systems.  We provide systematic methods to construct and classify interactions suitable for the theory of hydrodynamic limits.

In constructing the microscopic model, it is necessary to express the microscopic configuration 
of the particles as well as the possible movement of particles at an instance in time.
Rigorous mathematical framework for interpreting interacting particle systems as Markov processes 
on configuration spaces was pioneered by Spitzer \cite{Spi70} (see also \cites{Lig85,Grif93}).
One of the first examples is given by the \emph{exclusion process}, 
a process in which a site is occupied by at most one particle,
and a particle is allowed to hop to an adjacent site at a certain rate, provided that the adjacent 
site is vacant.  
This rule governing the exclusion process can be expressed rigorously as follows.
In the one dimensional case, we label the sites by the set of integers $\Z$,
and the state at one vertex $x\in\Z$ can be expressed as $0$ or $1$, where $0$ indicates that the site is vacant,
and $1$ indicates that the site is occupied by a single particle.
Then $S=\{0,1\}$ is the set of local states, that is, the possible states at each site,
and then the set of all possible configurations of the microscopic
system can be expressed as elements of the configuration space $S^\Z\coloneqq\prod_{x\in\Z}S$.

\begin{figure}[htbp]
	\begin{center}
		\includegraphics[width=0.45\linewidth]{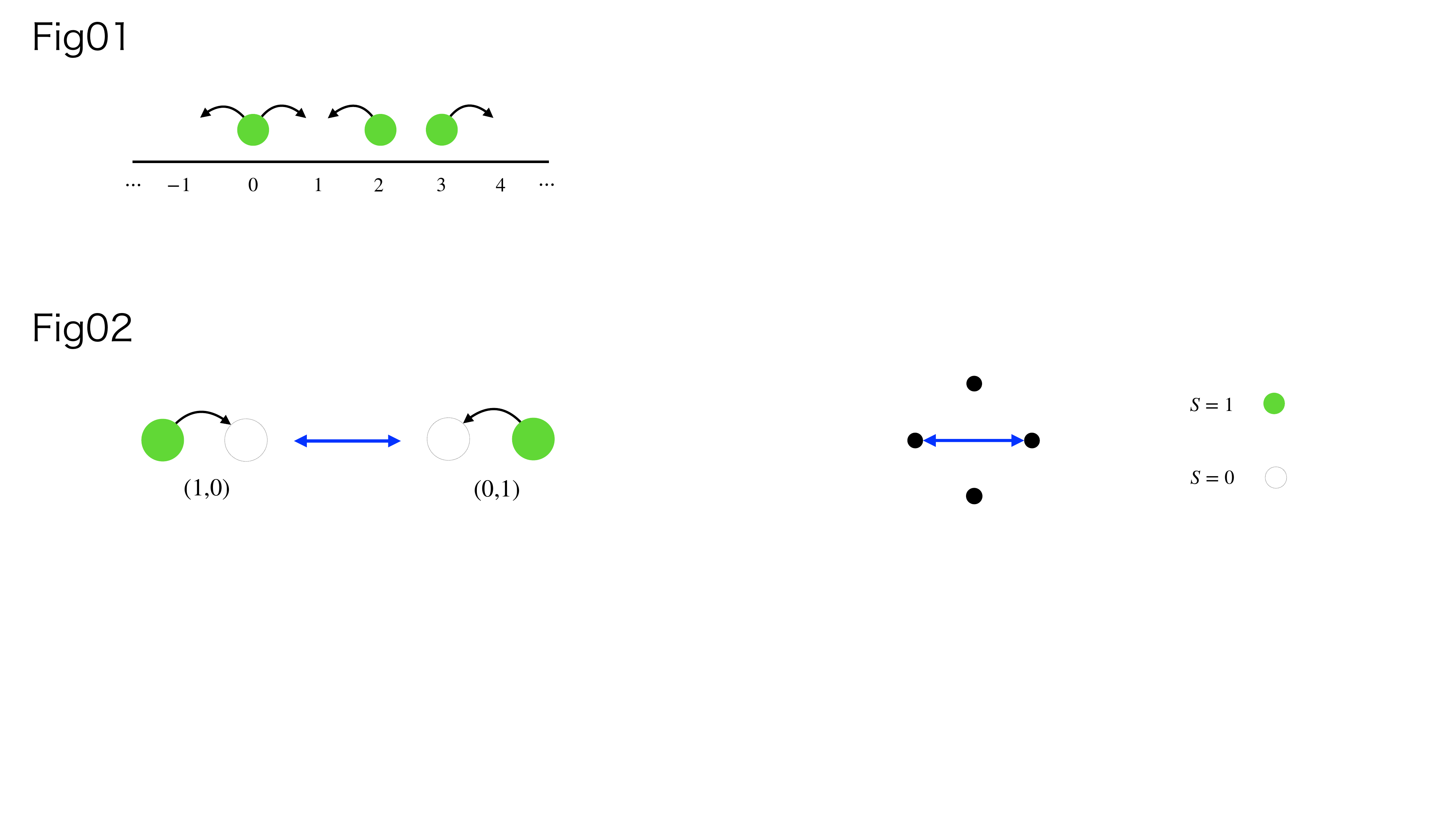}
		\caption{The Exclusion Process on $\Z$.  
		}\label{Fig: 01}
	\end{center}
\end{figure}

Our hopping rule is homogenous, in that it does not depend on the particular site.
The hopping takes place on edges $e=(x,x+1)$ between sites, for $x\in\Z$.
Hence we can express the hopping rule as the set of permitted movement between 
local states on adjacent sites.  More concretely, if we fix an edge $e=(x,x+1)$, 
then the possible configurations on sites $x$ and $x+1$ can be expressed via the set $S\times S=\{(0,0),(1,0),(0,1),(1,1)\}$.
Then $(1,0)\in S\times S$ signifies the state that $x$ is occupied by a particle and $x+1$ is vacant, 
$(1,1)\in S\times S$ signifies the state that $x$ and $x+1$ are both occupied by a particle,
etc.

\begin{figure}[htbp]
	\begin{center}
		\includegraphics[width=1\linewidth]{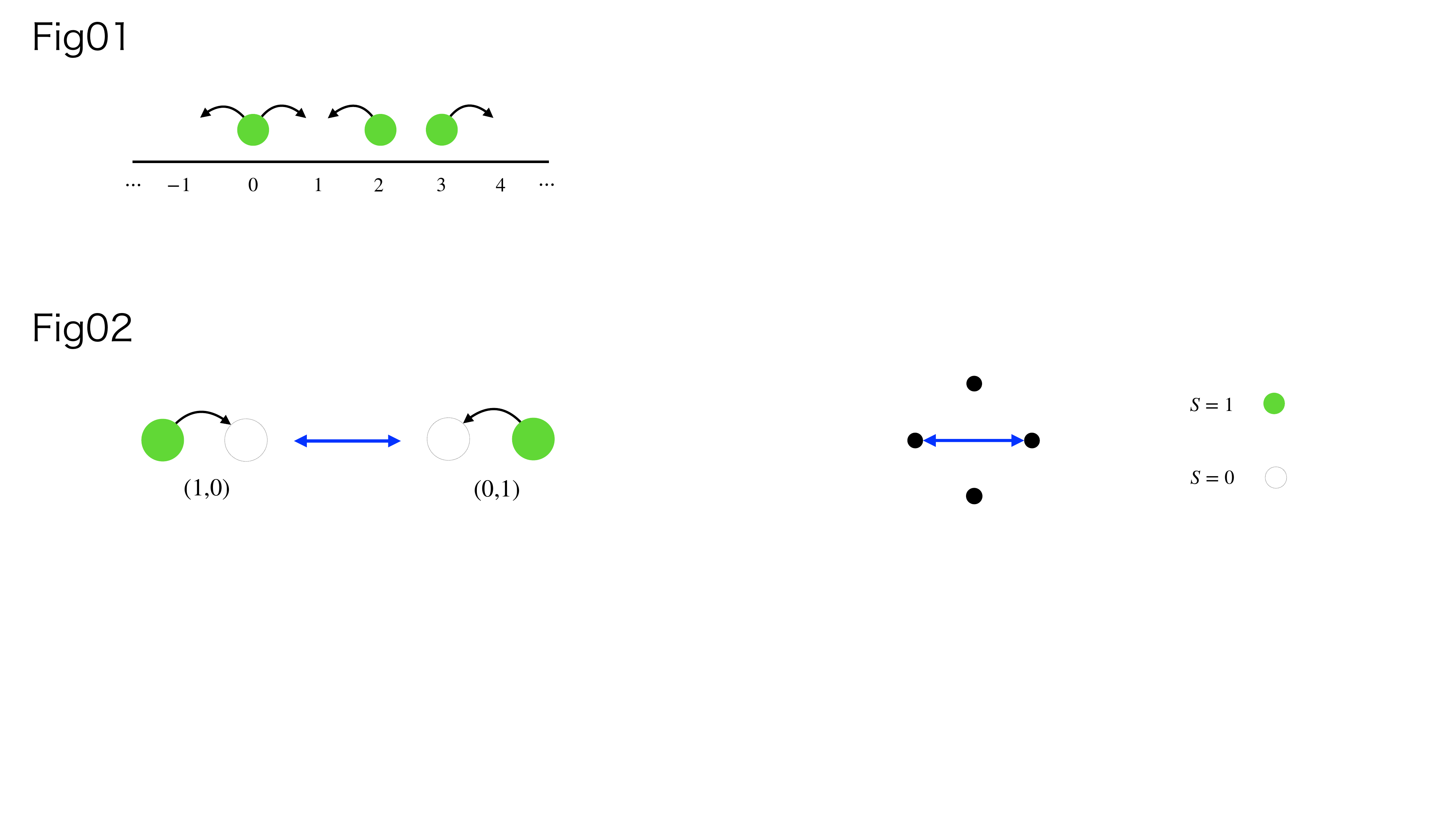}
		\caption{Exclusion Interaction and the Associated Graph}\label{Fig: 02}
	\end{center}
\end{figure}

We focus on the movement of particles on adjacent sites.
The permitted movements can be expressed as edges of a graph with vertices $S\times S$.
The hopping rule for the exclusion process is given by
\[
	\phiEX=\{ ((1,0),(0,1)), ((0,1),(1,0))  \} \subset (S\times S)\times (S\times S),
\]
which we view as the set of edges of a graph with vertices $S\times S$.
This rule is independent of our choice of edge $e$.
The pair $(S\times S,\phiEX)$ form a symmetric graph,
which is described in \cref{Fig: 02}.
We call $\phiEX$ the \emph{exclusion interaction}, and view it as
the mathematical object describing the hopping rule of the exclusion process.
In order to completely describe the stochastic property of the exclusion process, it is necessary to fix
the rate of the hopping.  This can be expressed as weights on the edges of the graph
$(S\times S,\phiEX)$.  However, for this article, we focus on the 
properties related to the underlying graph structure independent of the stochastic data.

The exclusion process is one of the most fundamentals of the interacting particle systems 
and has been studied extensively (see for example \cites{Spi70,Spo91,KL99,KLO94,KLO95,Lig99,FHU91,FUY96,GPV88,VY97}).
In generalizing the exclusion interaction in order to deal with more complicated models, 
we can consider taking the set of local states $S$ to be any non-empty set,
and consider any $\phi\subset (S\times S)\times (S\times S)$ such that the pair
$(S\times S,\phi)$ form a symmetric directed graph.  However, an arbitrary pair $(S,\phi)$ can be too general
to hope for a good theory.  One of our major interests is in determining suitable conditions on $(S,\phi)$
for the theory of hydrodynamic limit to work.
In our previous article \cite{BKS20}, we have identified the notion of \emph{irreducibly quantified interactions},
a condition which we imposed on maps $S\times S\rightarrow S\times S$.
This is a condition
implicitly satisfied by almost all known interacting particle systems with discrete $S$, and plays an important role in the proof of hydrodynamic limits.
In this article, we will extend the notion of interactions and the irreducibly quantified
condition to general symmetric directed graphs $(S\times S,\phi)$ (see \cref{def: IQ}).

Let $S$ be a non-empty set.  We define an \emph{interaction} to be 
a subset $\phi\subset (S\times S)\times(S\times S)$ such that $(S\times S,\phi)$
is a symmetric directed graph.   The exclusion above gives an example of an interaction.
We will classify the interactions using the associated space of conserved quantities.
A \emph{conserved quantity} of an interaction is a function $\xi\colon S\rightarrow\R$, such that the function
\[
	\xi_S\colon S\times S\rightarrow\R, \qquad \xi_S(s_1,s_2)\coloneqq\xi(s_1)+\xi(s_2).
\]
is constant on the connected components of the graph $(S\times S,\phi)$.
The constant function on $S$ is trivially a conserved quantity.
Philosophically, the macroscopic properties of the system should only depend on the value of the
conserved quantities and should be independent of the individual microscopic states.  
Let $\C(S)$ be the space of all conserved quantities of an interaction $(S,\phi)$ 
modulo the space of constant functions.  Then $\C(S)$
is an important invariant 
associated to the interaction.  
The dimension $\cp$ of this space corresponds to the number of independent non-constant
conserved quantities for the interaction.
We define two interactions to be \emph{equivalent},
if there exists a bijection between the sets of local states which induces
an isomorphism between the space of conserved quantities (see Definition \ref{def: equivalence} for a precise definition).
Due to its relation to the space of conserved quantities, this equivalence allows for the classification of
interactions according to the expected macroscopic properties.

Our new framework of formulating the interactions as graphs
allows for the construction of wedge sums and box products of interactions.
This gives a systematic method to construct new interactions from existing ones.
Furthermore, we prove that the irreducibly quantified condition is preserved under wedge sums and box products,
hence our method provides a systematic method to construct new irreducibly quantified interactions.
We give examples of such new interactions.
The study of hydrodynamic limits of stochastic large-scale interacting systems 
given via our interactions will be explored in subsequent research (see \cites{BS24uniform,BS24harmonic}).

The interaction is a very simple mathematical object.
Although our original interest was application to hydrodynamic limits,
the classification of interactions via conserved quantities seems to
be related to interesting computational and combinatorial problems
potentially leading to other interesting mathematical theories.  
For example, article \cite{W24} explores the problem of determining whether a given interaction satisfies the condition of being irreducibly quantified, and establishes that this problem is computationally decidable.
For this reason,
we are hoping that the study of interactions themselves can also be an interesting problem.

Although we build on many of the ideas of our previous work \cite{BKS20},
which dealt with interactions given by a map $S\times S\rightarrow S\times S$,
the current article is logically independent.  The detailed contents of this article is as follows.
In \S\ref{sec: I}, we introduce the notion of interactions and conserved quantities. Then,
we introduce the notion of equivalence of interactions, which will be the basis of the classification of interactions.
In \S\ref{sec: WS}, we introduce the wedge sums and the box products of interactions, and show that examples 
of interactions underlying typical processes such as the multi-species exclusion process,
lattice-gas with energy process and the multi-lane particle process can be obtained as 
wedge sums and box products of other simpler interactions.
In \S\ref{sec: classification}, for $\kappa\geq 1$ and
$S_\kappa=\{0,\ldots,\kappa\}$, we classify 
in \cref{thm: 1} the interactions 
whose dimension of the space of conserved quantities $\cp=\kappa-1$,
and use this fact to completely classify interactions on $S=\{0,1\}$, $S=\{0,1,2\}$ and $S=\{0,1,2,3\}$.
In \S\ref{sec: IQ}, we introduce the notion of irreducibly quantified interactions, and show that
this condition is preserved under wedge sums and box products.
Using these facts, we systematically prove in \cref{thm: main} that typical examples of interactions are 
irreducibly quantified.  In \cref{appendix: A}, we consider an interaction for $S=\{0,1,2,3\}$ 
in our classification which we believe has not previously been studied.  

%
%
%
\section{Conserved Quantities and Interactions}\label{sec: I}
%
%
%

In this section, we will consider the notion of interactions.
In this article, a \emph{graph} refers to a directed combinatorial
graph, i.e.\ a pair $(X,E)$ consisting of a set of vertices $X$ and 
the set of  edges $E\subset X\times X$.
We say that $(X,E)$ is \emph{symmetric}, if for any $e=(oe,te)\in E$, we have $\bar e=(te,oe)\in E$.
Let $S$ be a non-empty set, which we call the \emph{set of local states}.

\begin{definition}\label{def: PI}
	We define an \emph{interaction} on $S$ to be
	a subset $\phi\subset(S\times S)\times (S\times S)$
	such that the pair $(S\times S,\phi)$ form a symmetric directed graph.
	In other words,
	 if $\vp=(\s,\s')\in\phi$ for $\s,\s'\in S\times S$,
	then $\bar\vp\coloneqq(\s',\s)\in\phi$.  We will also
	often refer to the pair $(S,\phi)$ as an interaction,
	and we refer to $(S\times S,\phi)$ as the \emph{associated graph}.
\end{definition}

Note that an interaction 
in \cite{BKS20} was defined as a map
$\phi\colon S\times S\rightarrow S\times S$
satisfying a certain symmetry condition (see \cite{BKS20}*{Definition 2.4}).
The symmetrification $\phi\subset (S\times S)\times (S\times S)$
of the graph of the map $\phi$ gives an interaction in our sense of \cref{def: PI}.

We next define the conserved quantities of an interaction $\phi$ on $S$ as follows.

\begin{definition}
	For an interaction $(S,\phi)$, we define a \emph{conserved quantity}
	to be a function $\xi\colon S\rightarrow\R$ on $S$
	such that for any $\vp=(\s,\s')\in\phi$, if we let $\s=(s_1,s_2), \s'=(s'_1,s'_2)\in S\times S$,
	then
	\begin{equation}\label{eq: 1}
		\xi(s_1)+\xi(s_2)=\xi(s'_1)+\xi(s'_2).
	\end{equation}
	Note that the constant function on $S$ is trivially a conserved quantity.
\end{definition}

For an interaction $(S,\phi)$ and $\s,\s'\in S\times S$,
we write $\s\lra_\phi\s'$ or simply $\s\lra\s'$
to signify that $(\s,\s')\in\phi$, i.e. that 
the vertices $\s$ and $\s'$ are connected by an edge
in $(S\times S,\phi)$,
and we write 
$\s\lrs_\phi\s'$ or simply $\s\lrs\s'$
to signify that $\s$ and $\s'$ are in the same connected component
of $(S\times S,\phi)$.
The conserved quantity is related to the connected components of $(S\times S,\phi)$ as follows.

\begin{lemma}\label{lem: connected}
	Let $(S,\phi)$ be an interaction. 
	For any function $\xi\colon S\rightarrow\R$, 
	we let $\xi_S\colon S\times S\rightarrow\R$ be the function
	defined by 
	\begin{equation}\label{eq: xi_S}	
		\xi_S(\s)\coloneqq\xi(s_1)+\xi(s_2),\qquad\forall\s=(s_1,s_2)\in S\times S.  
	\end{equation}
	Then $\xi$ is a conserved quantity for the interaction $(S,\phi)$ if and only if the function 
	$
		\xi_S
	$
	 is constant on the connected components of the associated graph $(S\times S,\phi)$.
	 In other words, $\xi$ is a conserved quantity if and only if
	 for any $\s,\s'\in S\times S$, if $\s\lrs\s'$, then we have $\xi_S(\s)=\xi_S(\s')$.
\end{lemma}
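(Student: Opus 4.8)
The plan is to recognize this as the standard graph-theoretic fact that a function on the vertices of a graph agrees on the endpoints of every edge if and only if it is constant on each connected component, and to apply it to the vertex function $\xi_S$ on the associated graph $(S\times S,\phi)$. The first step is to unwind the definition of conserved quantity: the defining condition \eqref{eq: 1} says precisely that $\xi_S(\s)=\xi_S(\s')$ holds for every edge $\vp=(\s,\s')\in\phi$, i.e.\ whenever $\s\lra\s'$. Thus the lemma reduces to showing that $\xi_S$ agrees on the two endpoints of every edge if and only if $\xi_S$ is constant on every connected component of $(S\times S,\phi)$.

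The reverse implication is immediate: if $\xi_S$ is constant on connected components, then for any edge $\s\lra\s'$ the endpoints $\s$ and $\s'$ lie in the same component, so $\xi_S(\s)=\xi_S(\s')$, which is exactly the conserved-quantity condition. For the forward implication, I would use the characterization that $\s\lrs\s'$, i.e.\ that $\s$ and $\s'$ lie in the same connected component, means there is a finite path $\s=\s_0\lra\s_1\lra\cdots\lra\s_n=\s'$ of edges connecting them. Here the symmetry of $(S\times S,\phi)$ recorded in \cref{def: PI} guarantees that the notion of connectedness is unambiguous and that each edge of such a path may be traversed in either direction while still lying in $\phi$. Assuming $\xi$ is a conserved quantity, each consecutive step $\s_{i-1}\lra\s_i$ gives $\xi_S(\s_{i-1})=\xi_S(\s_i)$ by \eqref{eq: 1}, and a straightforward induction on the path length $n$ chains these equalities into $\xi_S(\s)=\xi_S(\s')$.

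This lemma is essentially a definitional restatement, so there is no substantial obstacle; the only point requiring care is making the notions of connected component and path precise in the directed-but-symmetric setting, and confirming that the edge condition \eqref{eq: 1} applies irrespective of the orientation in which each edge of the connecting path is traversed, which is exactly what the symmetry of $\phi$ provides.
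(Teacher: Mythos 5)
Your proposal is correct and follows essentially the same argument as the paper: the reverse implication by specializing constancy on components to the endpoints of a single edge, and the forward implication by chaining the edge condition \eqref{eq: 1} along a finite path connecting two vertices in the same component. The extra remark about symmetry of $\phi$ ensuring path edges lie in $\phi$ regardless of orientation is a fine clarification but does not change the substance.
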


\begin{proof}
	If $\xi_S$ is constant on the connected components of $(S\times S,\phi)$, then for any 
	$\s=(s_1,s_2)$ and $\s'=(s'_1,s'_2)$ in $S\times S$ such that
	$(\s,\s')\in\phi$,
	we see that $\xi(s_1)+\xi(s_2)=\xi_S(\s)=\xi_S(\s')=\xi(s'_1)+\xi(s'_2)$.  
	This  shows that $\xi$ is a conserved quantity.
	Conversely, suppose $\xi$ is a conserved quantity,
	and consider $\s,\s'\in S\times S$ in the same connected component of $(S\times S,\phi)$.
	Then there exists a sequence of vertices $\s^1,\ldots,\s^N\in S\times S$
	such that $\s^1=\s$, $\s^N=\s'$, and
	$(\s^i,\s^{i+1})\in\phi$ for integers $1\leq i < N$.
	Then by definition of a conserved quantity, we have
	\[
		\xi_S(\s)=\xi_S(\s^1)=\xi_S(\s^2)=\cdots=\xi_S(\s^N)=\xi_S(\s').
	\]
	This proves that $\xi_S$ is constant on the connected components of $(S\times S,\phi)$ as desired.
\end{proof}

For an interaction $(S,\phi)$,
we denote by $\C(S)$ the $\R$-linear space of conserved quantities modulo 
the space of constant functions, and let $c_{\phi}:=\dim_\R\Consv^{\phi}(S)$ be its dimension.
The exclusion interaction $\phiEX$ described in \S\ref{sec: introduction}
gives the first example of an interaction.

\begin{example}\label{def: EI}
	Let $S=\{0,1\}$.
	The \emph{exclusion interaction} $\phiEX$ is the interaction
	given by $(0,1)\lra (1,0)$.  In other words,
	\[
		\phiEX\coloneqq\{((1,0),(0,1)),((0,1),(1,0))\}\subset (S\times S)\times
		(S\times S).
	\]
	We have $c_{\phiEX}=1$, and $\Consv^{\phiEX}(S)$ is spanned by 
	$\xi\colon S\rightarrow\R$ given by $\xi(j)=j$ for $j=0,1$.
\end{example}

The generalized exclusion interaction with maximal occupancy $\kappa$, or simply the $\kappa$-exclusion
interaction, gives an example of an interaction generalizing the exclusion interaction.

\begin{figure}[htbp]
	\begin{center}
		\includegraphics[width=1\linewidth]{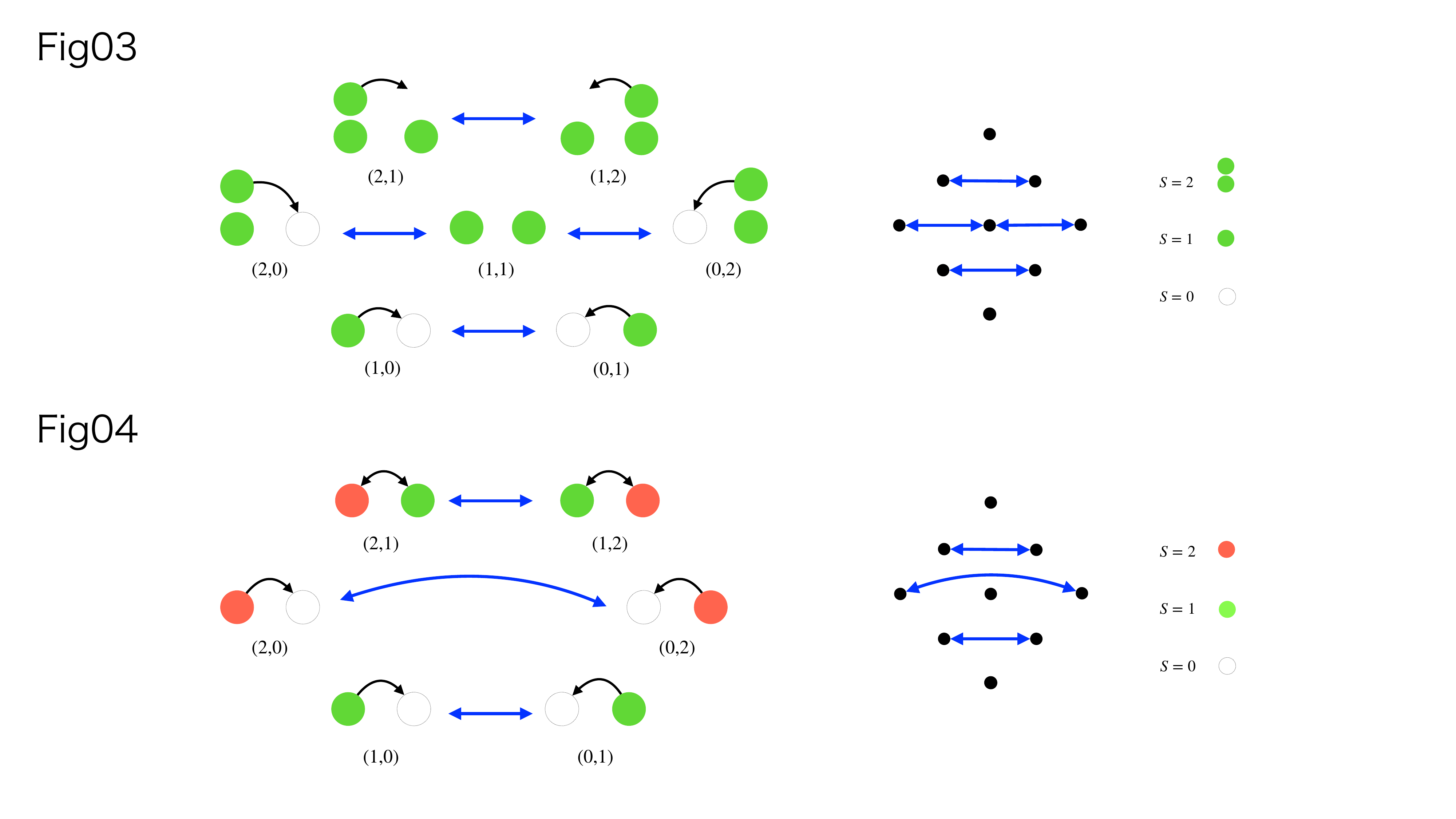}
		\caption{$\kappa$-exclusion interaction for $\kappa=2$, underlying the process
		commonly referred to as the generalized exclusion process with maximal occupancy $\kappa=2$.}\label{Fig: 03}
	\end{center}
\end{figure}

\begin{example}\label{def: GE}
	For an integer $\kappa\geq 1$, 
	let $S_\kappa=\{0,1,\ldots,\kappa\}$.
	The \emph{generalized exclusion interaction with maximal occupancy $\kappa$},
	or simply the \emph{$\kappa$-exclusion interaction}
	is the interaction  $\phikEX$ given by $ (j,k)\lra(j-1,k+1)$ for
	integers $0<j\leq\kappa$ and $0\leq k<\kappa$.
	In other words,
	\[
		\phikEX\coloneqq\{((j,k), (j-1,k+1)), 
		((j-1,k+1), (j,k))\mid 0<j\leq\kappa,\,\,0\leq k<\kappa\}.
	\]
	For any $\xi \in \Consv^{\phikEX}(S_\kappa)$ normalized as $\xi(0)=0$, by definition, $\xi(j)=\xi(j)+\xi(0)=\xi(j-1)+\xi(1)$ holds and so $\xi(j)=j \xi(1)$ for any $j=0,1,\ldots,\kappa$. Hence, we have $c_{\phikEX}=1$, and $\Consv^{\phikEX}(S_\kappa)$ is spanned by 
	$\xi\colon S_\kappa\rightarrow\R$ given by $\xi(j)=j$ for $j=0,1,\ldots,\kappa$.
	This models the movement of particles with at most $\kappa$ particles at each location.
	This interaction underlies the generalized exclusion process 
	studied by Kipnis--Landim--Olla \cites{KLO94,KLO95}, Sepp\"{a}l\"{a}inen \cite{Sep99}, Arita--Krapivsky--Mallick
	\cite{AKM14} and by many others.
	The process is called the $\kappa$-exclusion process in \cite{Sep99}.
	The case $\kappa=1$ coincides with the exclusion interaction, hence $\phioEX=\phiEX$.
\end{example}

The multi-species exclusion interaction gives another generalization of the exclusion interaction.

\begin{figure}[htbp]
	\begin{center}
		\includegraphics[width=1\linewidth]{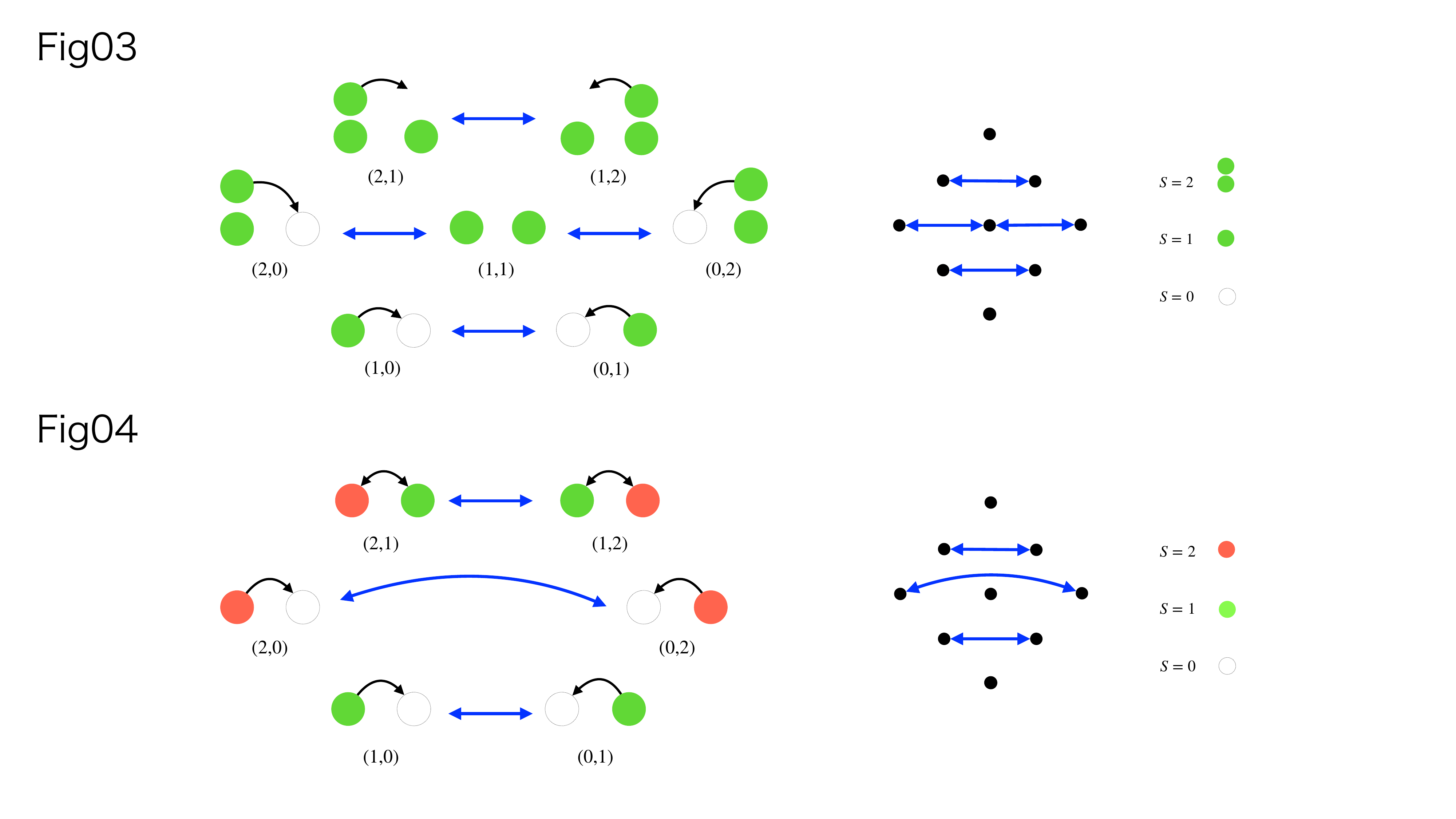}
		\caption{multi-species exclusion interaction for $\kappa=2$}
	\end{center}
\end{figure}

\begin{example}\label{def: MS}
	For an integer $\kappa\geq 1$ and $S_\kappa=\{0,1,\ldots,\kappa\}$,
	we let
	\[
		\phi^\kappa_\ms\coloneqq\{ ((j,k),(k,j))\in S_\kappa\times S_\kappa\mid j,k\in S_\kappa, j\neq k\}.
	\]
	Then the pair $(S_\kappa,\phi^\kappa_\ms)$ is an interaction, which we call the 
	\emph{multi-species exclusion interaction}.
	We have $c_{\phi^\kappa_\ms}=\kappa$, 
	and we have a basis  $\xi^1,\ldots,\xi^\kappa$ 
	of $\Consv^{\phi^\kappa_\ms}(S_\kappa)$ called the \emph{standard basis}
	defined as
	$\xi^i(0)=0$ and
	$\xi^i(j)=\delta_{ij}$ for any integer $i,j=1,\ldots,\kappa$.
	This interaction underlies the multi-color exclusion process studied by Quastel 
	\cite{Qua92} (but without the edge $(1,2) \lra (2,1)$),
	Dermoune-Heinrich \cite{DH08} and Halim--Hac\`ene \cite{HH09}, as well as
	the multi-species exclusion process studied by Nagahata--Sasada \cite{NS11}.
	Again, the case $\kappa=1$ coincides with the exclusion interaction, hence $\phi^1_\ms=\phiEX$.
\end{example}

\begin{example}\label{example: GI}
	For $S=\{-1,1\}$, the Glauber interaction $\phi_{\operatorname{G}}$ is given as in
	\cref{Fig: 05}.	The state $-1\in S$ represents a downward spin 
	and $1\in S$ represents an upward spin of a particle.
	We have $\C(S)=\{0\}$, hence $\cp=0$ in this case.
\end{example}

\begin{figure}[htbp]
	\begin{center}
		\includegraphics[width=0.5\linewidth]{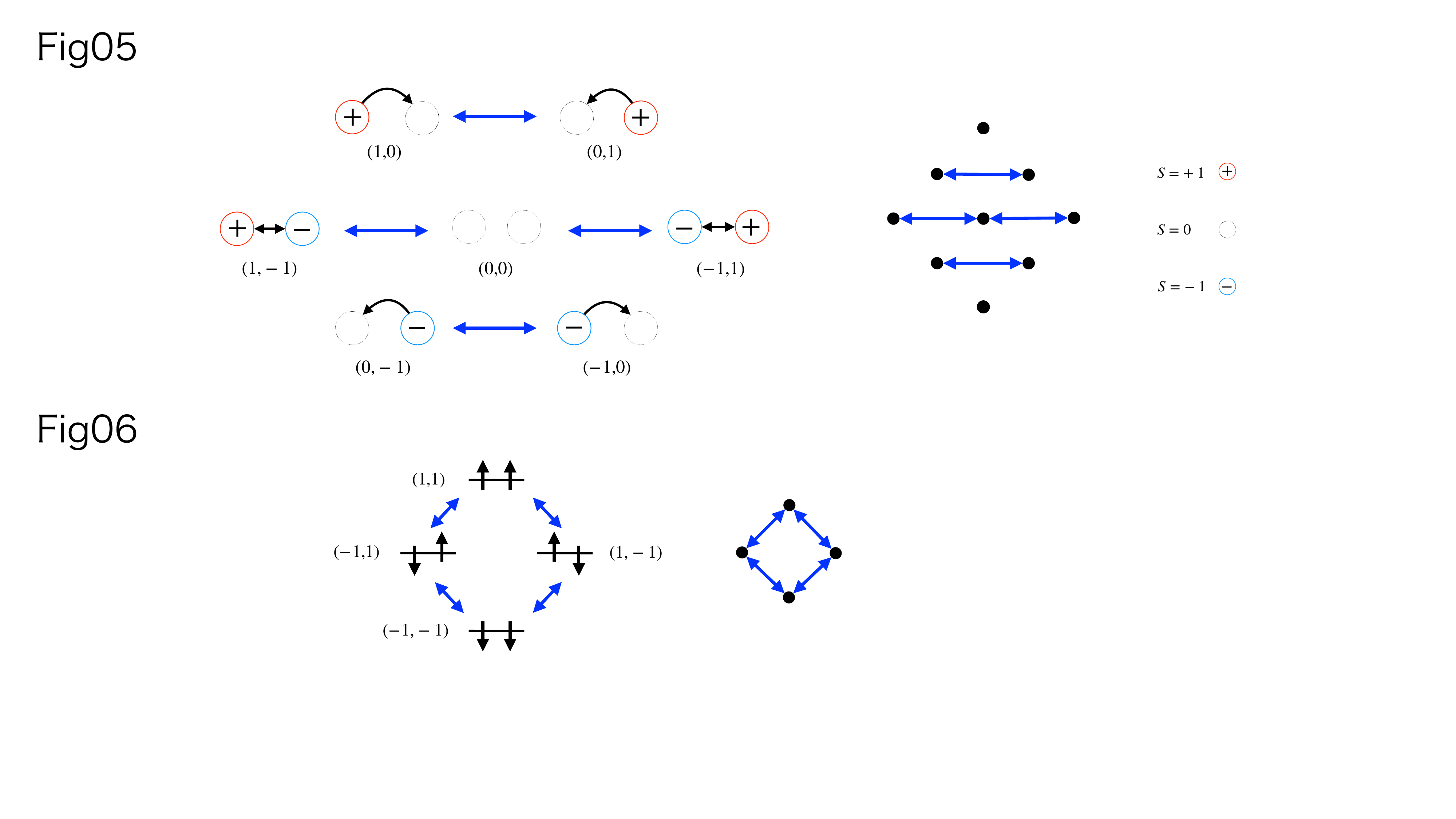}
		\caption{The Glauber interaction $\phi_{\operatorname{G}}$
		gives an example of an interaction such that $\cp=0$.}
		\label{Fig: 05}
	\end{center}
\end{figure}

Next, we define the notion of equivalence and isomorphisms of interactions.
For any set $S$, we denote by $\Map(S,\R)$ be the space of real valued functions on $S$.

\begin{definition}\label{def: equivalence}
	Let $(S,\phi)$ and $(S',\phi')$ be interactions.
	\begin{enumerate}
	\item
	We say that $(S,\phi)$ and $(S',\phi')$ are \emph{equivalent},
	if there exists a bijection $S\cong S'$ such that the induced isomorphism
	$\Map(S',\R)\cong\Map(S,\R)$ induces an $\R$-linear isomorphism 
	$\Cd(S')\cong\C(S)$.  In this case, we denote 
	$(S,\phi)\simeq(S',\phi')$, or simply, $\phi\simeq\phi'$.
	\item
	We say that $(S,\phi)$ and $(S',\phi')$ are \emph{isomorphic},
	if there exists an equivalence $(S,\phi)\simeq(S',\phi')$ mapping $\phi$ bijectively to $\phi'$
	that gives an isomorphism of  graphs $(S\times S,\phi)\cong (S'\times S',\phi')$.
	In this case, we denote 
	$(S,\phi)\cong(S',\phi')$, or simply, $\phi\cong\phi'$.
	\end{enumerate}
\end{definition}

Both equivalence and isomorphisms are equivalence relations of interactions.
The following example is isomorphic to the $2$-exclusion interaction.

\begin{figure}[htbp]
	\begin{center}
		\includegraphics[width=1\linewidth]{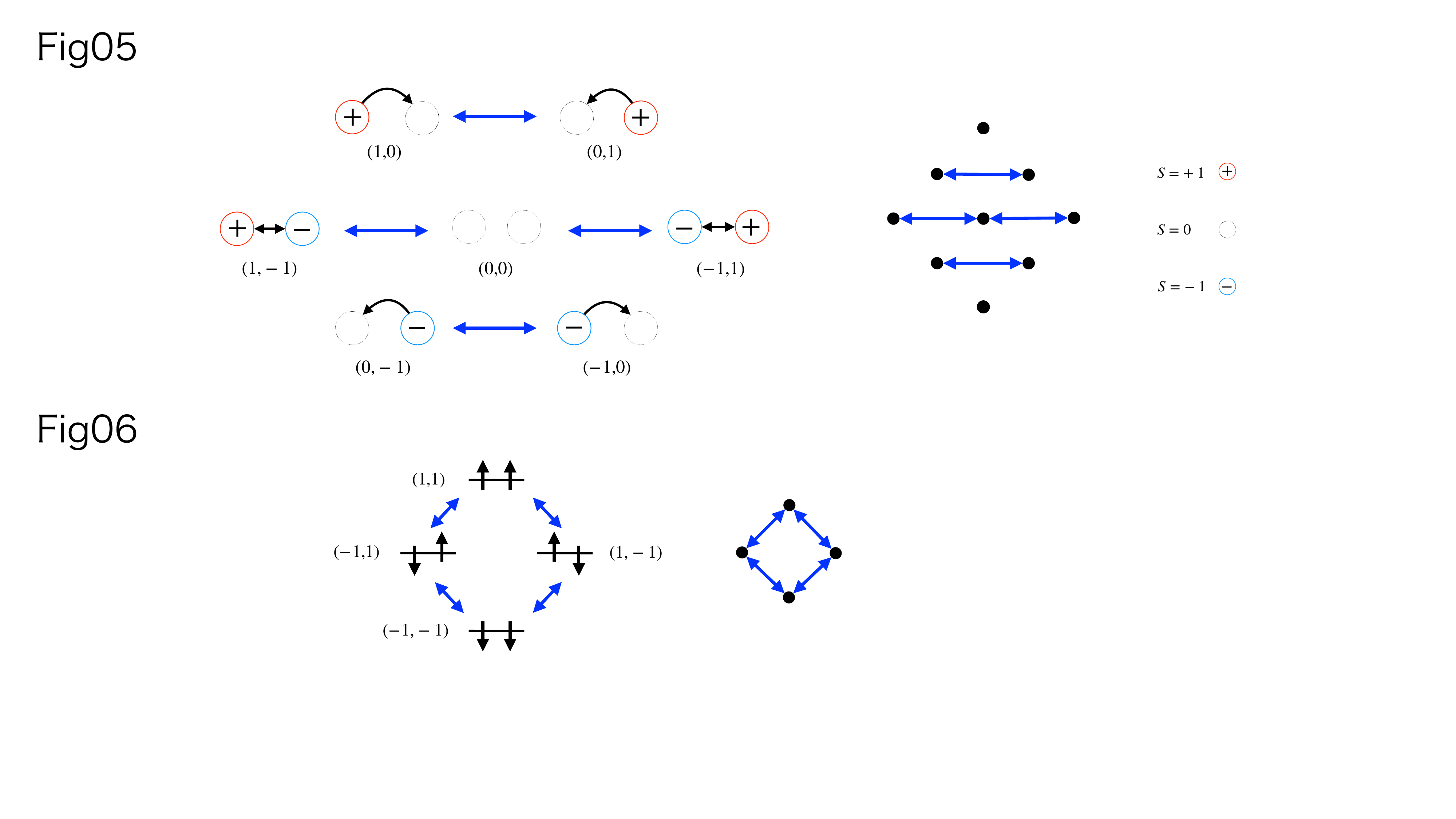}
		\caption{Interaction of \cref{example: 1} underlying the two-species exclusion 
	process with annihilation and creation studied in \cite{Sas10}.  
	Note that the associated graph coincides with that of \cref{Fig: 03}.}
	\end{center}
\end{figure}

\begin{example}\label{example: 1}
	Let $S=\{-1,0,+1\}$, and we let $\phi\subset(S\times S)\times(S\times S)$ 
	be the interaction given by
	\begin{align*}
		(-1,0)&\lra (0,-1), & (1,0)&\lra (0,1), &
		(1,-1)&\lra(0,0)\lra(-1,1).
	\end{align*}
	Then $(S,\phi)$ is an interaction underlying the two-species exclusion 
	process with annihilation and creation studied by Sasada in \cite{Sas10}.	
	Let $S'\coloneqq\{0,1,2\}$ and consider the bijection
	$S'\cong S$ given by $0\mapsto -1, 1\mapsto 0, 2\mapsto 1$.
	Then the interaction $\phi'$ on $S'$ corresponding to $(S,\phi)$ is 
	given by
	\begin{align*}
		(0,1)&\lra (1,0), & (2,1)&\lra (1,2), &
		(2,0)&\lra(1,1)\lra(0,2).
	\end{align*}
	Then we see that $(S',\phi')$ coincides with the $2$-exclusion interaction
	$\phitwEX$ on $\{0,1,2\}$.
	Thus we have $c_{\phi}=1$, and $\Consv^\phi(S)$ is spanned by 
	$\xi\colon S\rightarrow\R$ given by $\xi(j)=j$ for $j=-1,0,1$.
\end{example}

\cref{example: 1} shows that even if the interactions are isomorphic, it can underly seemingly very different stochastic models.
However, the proof of hydrodynamic limit in \cite{Sas10} was possible
specifically because the underlying geometry of the model coincided with the case of the $2$-exclusion interaction.
This is one of the reasons that we are interested in the classification of interactions via isomorphisms and equivalences.

Given an interaction $(S,\phi)$, we can construct an equivalent interaction $\hat\phi$ on $S$
whose connected components are classified by the values of the conserved quantities.

\begin{lemma}\label{lem: completion}
	For any interaction $(S,\phi)$, we define the completion $\hat\phi$ on $S$ by
	\[
		\hat\phi\coloneqq\{ (\s,\s')\in (S\times S)\times(S\times S)\mid \xi_S(\s)=\xi_S(\s')\,\,\forall\xi\in\C(S)\}.
	\]
	Then 
	$\phi\subset\hat\phi$, and $\hat\phi$ is equivalent to $\phi$.
	The connected components of the graph $(S\times S,\hat\phi)$ are classified
	by the values of the conserved quantities.
\end{lemma}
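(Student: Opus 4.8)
The plan is to unwind the definitions in a single logical chain, since each of the three assertions feeds into the next: I would first verify that $\hat\phi$ is genuinely an interaction, then prove $\phi\subset\hat\phi$, then deduce the coincidence of conserved quantities (giving the equivalence), and finally read off the description of the connected components. To begin, note that $(S\times S,\hat\phi)$ is a symmetric directed graph, so that $\hat\phi$ is an interaction in the sense of \cref{def: PI}: the defining condition $\xi_S(\s)=\xi_S(\s')$ for all $\xi\in\C(S)$ is symmetric in $\s$ and $\s'$, hence $(\s,\s')\in\hat\phi$ forces $(\s',\s)\in\hat\phi$. The inclusion $\phi\subset\hat\phi$ then follows directly from the definition of a conserved quantity: if $\vp=(\s,\s')\in\phi$, then \eqref{eq: 1} says exactly that $\xi_S(\s)=\xi_S(\s')$ for every conserved quantity $\xi$, which is the membership condition for $\hat\phi$.

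Next, for the equivalence I would use the identity bijection $S\cong S$, so that the induced map on $\Map(S,\R)$ is the identity and the claim reduces to the equality of subspaces $\C(S)=\Ch(S)$ inside $\Map(S,\R)$ modulo constants. Here one should record the harmless point that the membership condition defining $\hat\phi$ is insensitive to adding a constant to $\xi$, since this merely shifts $\xi_S$ by a global constant, so the condition is well defined on the quotient $\C(S)$. The inclusion $\C(S)\subset\Ch(S)$ holds by construction: if $\xi$ is a $\phi$-conserved quantity and $(\s,\s')\in\hat\phi$, then taking this very $\xi$ in the defining condition of $\hat\phi$ yields $\xi_S(\s)=\xi_S(\s')$, so $\xi$ is $\hat\phi$-conserved. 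The reverse inclusion $\Ch(S)\subset\C(S)$ is a formal consequence of $\phi\subset\hat\phi$: a function whose associated $\xi_S$ agrees across every edge of $\hat\phi$ agrees across every edge of the smaller set $\phi$ as well, so it is $\phi$-conserved. Combining the two gives $\C(S)=\Ch(S)$, and hence $\phi\simeq\hat\phi$ via \cref{def: equivalence}.

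Finally, for the connected components I would observe that the relation ``$(\s,\s')\in\hat\phi$'' is itself an equivalence relation on $S\times S$. Choosing a basis $\xi^1,\ldots,\xi^{\cp}$ of $\C(S)$ and setting $\Xi\coloneqq(\xi^1_S,\ldots,\xi^{\cp}_S)\colon S\times S\rightarrow\R^{\cp}$, membership $(\s,\s')\in\hat\phi$ is precisely the condition $\Xi(\s)=\Xi(\s')$, which is reflexive, symmetric and transitive. Consequently two vertices lie in the same connected component of $(S\times S,\hat\phi)$ if and only if they are already joined by a single edge, i.e.\ if and only if $\Xi(\s)=\Xi(\s')$; equivalently, the connected components are exactly the nonempty fibers of $\Xi$, which is the assertion that they are classified by the values of the conserved quantities.

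I do not expect a genuine obstacle in this proof, as every step is a direct unwinding of the definitions. The only points demanding care are the bookkeeping around the ``modulo constants'' convention in the definition of $\C(S)$, and the use of \cref{lem: connected} to legitimately pass between the edgewise reading of \eqref{eq: 1} and constancy of $\xi_S$ on connected components, which is what makes the monotonicity argument for $\Ch(S)\subset\C(S)$ clean.
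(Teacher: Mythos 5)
Your proof is correct and takes essentially the same route as the paper's, which compresses the whole argument into a citation of \cref{lem: connected} together with the observation that $\Ch(S)=\C(S)$: the inclusion $\phi\subset\hat\phi$ from the definition of a conserved quantity, the equivalence via the identity bijection and double inclusion of conserved-quantity spaces, and the classification of components from the fact that the $\hat\phi$-adjacency relation is an equivalence relation. The only cosmetic difference is your map $\Xi$ built from a finite basis, which tacitly presumes $\dim_\R\C(S)<\infty$ (not assumed at this point in the paper); quantifying directly over all $\xi\in\C(S)$ yields the same equivalence relation and avoids this.
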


\begin{proof}
	Our first assertion follows from the construction
	and \cref{lem: connected}.
	By construction and \cref{lem: connected}, for any $\s,\s'\in S\times S$,
	we have
	$\s\lra\s'$ if and only if $\xi_S(\s)=\xi_S(\s')$
	for any $\xi\in\operatorname{Consv}^{\hat\phi}(S)=\C(S)$.
	This gives our second assertion.
\end{proof}

If we assume that $S$ is finite, then any interaction $(S,\phi)$ is isomorphic to
an interaction on $\{0,\ldots,\kappa\}$, where $\kappa+1=\abs{S}$ 
is the number of elements in $S$.
Due to the following \cref{prop: smallest},  we see that 
any interaction on a finite set of local states is 
equivalent to an interaction obtained by adding edges to the associated graph $(S\times S,\phi^\kappa_\ms)$ of the
multi-species exclusion interaction.

\begin{proposition}\label{prop: smallest}
	Suppose $\phi$ is an interaction on $S=\{0,\ldots,\kappa\}$.  
	Then for the associated completion
	$\hat\phi$ on $S$, we have
	$\phi^\kappa_\ms\subset\hat\phi$ for the multi-species exclusion interaction $\phi^\kappa_\ms$ on $S$.
	In particular, any interaction $\phi$ on $S$ such that $\C(S)=\kappa$ is equivalent to the multi-species exclusion interaction.
\end{proposition}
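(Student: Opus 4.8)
The plan is to derive both assertions directly from the definitions, the only substantive point being an elementary dimension count.

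First I would prove the inclusion $\phi^\kappa_\ms\subset\hat\phi$, which in fact requires no hypothesis on $\phi$. By \cref{lem: completion}, a pair $(\s,\s')$ lies in $\hat\phi$ precisely when $\xi_S(\s)=\xi_S(\s')$ for every $\xi\in\C(S)$. Now for any function $\xi\colon S\to\R$ and any $j,k\in S$, commutativity of addition gives $\xi_S(j,k)=\xi(j)+\xi(k)=\xi(k)+\xi(j)=\xi_S(k,j)$. Since $\phi^\kappa_\ms$ consists exactly of the transposition edges $(j,k)\lra(k,j)$ with $j\neq k$, each of these satisfies the defining condition of $\hat\phi$, so $\phi^\kappa_\ms\subset\hat\phi$.

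Next I would identify $\Consv^{\phi^\kappa_\ms}(S)$ explicitly. The edges of $\phi^\kappa_\ms$ impose on a conserved quantity only the identities $\xi_S(j,k)=\xi_S(k,j)$, and as observed above these hold for \emph{every} function $\xi\colon S\to\R$. Hence every function on $S$ is a conserved quantity for $\phi^\kappa_\ms$, so $\Consv^{\phi^\kappa_\ms}(S)$ is the whole of $\Map(S,\R)$ modulo the constant functions; since $\abs{S}=\kappa+1$, this space has dimension $\kappa$, recovering $c_{\phi^\kappa_\ms}=\kappa$.

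Finally I would conclude by a dimension count. By definition $\C(S)$ is a subspace of $\Map(S,\R)$ modulo constants, hence a subspace of $\Consv^{\phi^\kappa_\ms}(S)$. If $\cp=\kappa$, then $\C(S)$ is a $\kappa$-dimensional subspace of the $\kappa$-dimensional space $\Consv^{\phi^\kappa_\ms}(S)$, forcing $\C(S)=\Consv^{\phi^\kappa_\ms}(S)$. Taking the identity bijection $S\cong S$ in \cref{def: equivalence}, this equality of spaces of conserved quantities is exactly the assertion that $\phi\simeq\phi^\kappa_\ms$. I expect no genuine obstacle here: the whole argument reduces to the symmetry of $\xi_S$ under swapping coordinates together with the fact that a $\kappa$-dimensional subspace of a $\kappa$-dimensional space is everything; the only care needed is in correctly unwinding the definitions of the completion and of equivalence.
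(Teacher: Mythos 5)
Your proposal is correct and follows essentially the same two-step structure as the paper's proof: the symmetry $\xi_S(j,k)=\xi(j)+\xi(k)=\xi(k)+\xi(j)=\xi_S(k,j)$ gives $\phi^\kappa_\ms\subset\hat\phi$, and a dimension count in the $\kappa$-dimensional space $\Consv^{\phi^\kappa_\ms}(S)$ gives the equivalence. The only (harmless) variation is in how you justify the inclusion $\C(S)\subset\Consv^{\phi^\kappa_\ms}(S)$: you observe that $\phi^\kappa_\ms$ imposes no constraints at all, so $\Consv^{\phi^\kappa_\ms}(S)$ is all of $\Map(S,\R)$ modulo constants and the inclusion is automatic, whereas the paper routes it through the completion via $\C(S)=\Ch(S)\subset\Consv^{\phi^\kappa_\ms}(S)$; your route makes the second assertion logically independent of the first.
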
	

\begin{proof}
	Consider any $((j,k),(k,j))\in\phi^\kappa_\ms$, where $j,k\in S$ such that $j\neq k$
	from the definition of the multi-species exclusion interaction of \cref{def: MS}.
	For any $\xi\in\C(S)$, we have
	\[
		\xi_S(j,k)=\xi(j)+\xi(k)=\xi(k)+\xi(j)=\xi_S(k,j).
	\]	
	Hence if we let $\hat\phi$ be the completion of $\phi$
	constructed in \cref{lem: completion}, 
	then we have $((j,k),(k,j))\in\hat\phi$.  
	This shows that $\phi^\kappa_\ms\subset\hat\phi$.
	If $\C(S)=\kappa$, then since $\Consv^{\phi^\kappa_\ms}(S)=\kappa$, 
	the natural inclusion $\C(S)=\Ch(S)\subset \Consv^{\phi^\kappa_\ms}(S)$ 
	induces an isomorphism
	\[
		\C(S)\cong\Consv^{\phi^\kappa_\ms}(S),
	\]
	hence by the definition of equivalence given in \cref{def: equivalence}, 
	we see that the interaction $\phi$ is equivalent to $\phi^\kappa_\ms$ as desired.
\end{proof}

%
%
%
\section{Wedge Sums and Box Products of Interactions}\label{sec: WS}
%
%
%

In this section, we introduce the wedge sums and box products of interactions, which are methods to
construct new interactions from existing ones.
In fact, the multi-species exclusion interaction can be obtained as iterated wedge sums of the exclusion interaction.
We will use such constructions to give various examples of interactions,
such as the lattice-gas with energy and the multi-lane interactions.

Let $S_1$ and $S_2$ be non-empty sets.  For $*_1\in S_1$ and $*_2\in S_2$,
we define the \emph{wedge sum} of $S_1$ and $S_2$ along $*_1\in S_1$ and $*_2\in S_2$ by
\[
	S_1\vee S_2\coloneqq (S_1\amalg S_2)/\sim,
\]
where $\sim$ is the equivalence relation on $S_1\amalg S_2$ generated by $*_1\sim *_2$.
We denote by $*$ the element represented by $*_1$ and $*_2$ in $S_1\vee S_2$.

\begin{definition}\label{def: WS}
	Let $\phi_1$ and $\phi_2$ be interactions on $S_1$ and $S_2$,
	and let $*_1\in S_1$ and $*_2\in S_2$.
	We define the interaction $\phi_1\vee\phi_2$ on $S_1\vee S_2$
	to be the interaction given by
	\begin{align*}
		\bss&\lra\bss', \qquad   \text{$(\bss,\bss')\in \phi_1$ or $(\bss,\bss')\in\phi_2$}\\
		(s,s')&\lra (s',s),\qquad \text{$s\in S_1$ and $s'\in S_2$}.
	\end{align*}
	We call the interaction $(S_1\vee S_2,\phi_1\vee\phi_2)$
	the \emph{wedge sum} of $(S_1,\phi_1)$ and $(S_2,\phi_2)$ 
	along $*_1\in S_1$ and $*_2\in S_2$, and 
	we denote this interaction as $(S_1,\phi_1)\vee(S_2,\phi_2)$.
\end{definition}

\begin{proposition}\label{prop: WS}
	Let $(S_1,\phi_1)$ and $(S_2,\phi_2)$ be interactions.  For $*_1\in S_1$ and $*_2\in S_2$,
	consider the wedge sum $S_1\vee S_2$ along $*_1$ and $*_2$.
 	Then the wedge sum $(S_1,\phi_1)\vee(S_2,\phi_2)=(S_1\vee S_2,\phi_1\vee\phi_2)$
	satisfies 
	\[
		\Consv^{\phi_1\vee\phi_2}(S_1\vee S_2)=\Consv^{\phi_1}(S_1)\oplus\Consv^{\phi_2}(S_2).
	\] 
	In particular, we have $c_{\phi_1\vee\phi_2}=c_{\phi_1}+c_{\phi_2}$.
\end{proposition}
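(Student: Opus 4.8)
The plan is to translate the conservation condition across all edges of the wedge sum into independent conditions on the two factors, the point being that the ``swap'' edges contribute nothing.

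First I would identify $\Map(S_1\vee S_2,\R)$ with the set of pairs $(\xi_1,\xi_2)\in\Map(S_1,\R)\times\Map(S_2,\R)$ satisfying $\xi_1(*_1)=\xi_2(*_2)$, via restriction of a function $\xi$ to each of $S_1$ and $S_2$ viewed inside $S_1\vee S_2$; the matching condition is exactly the requirement that the two values agree at the identified basepoint $*$. By \cref{lem: connected}, a function $\xi$ is a conserved quantity for $\phi_1\vee\phi_2$ precisely when $\xi_S(\s)=\xi_S(\s')$ for every edge $(\s,\s')\in\phi_1\vee\phi_2$, so I would examine the three families of edges of \cref{def: WS} in turn.

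The key observation, which does all the work, is that the swap edges $(s,s')\lra(s',s)$ with $s\in S_1$ and $s'\in S_2$ impose no condition whatsoever: for any function $\xi$ one has $\xi(s)+\xi(s')=\xi(s')+\xi(s)$ automatically. The edges inherited from $\phi_1$ involve only vertices in $S_1\times S_1$, and by \cref{lem: connected} applied to $\phi_1$ they collectively say exactly that $\xi_1=\xi|_{S_1}$ is a conserved quantity for $\phi_1$; symmetrically, the edges from $\phi_2$ say that $\xi_2=\xi|_{S_2}$ is a conserved quantity for $\phi_2$. Hence, before quotienting by constants, the space of conserved quantities of $\phi_1\vee\phi_2$ is the fibre product of those of $\phi_1$ and $\phi_2$ over their common value at the basepoint.

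It then remains to pass to the quotient by constant functions. I would define the map sending the class of $\xi$ to the pair of classes $(\overline{\xi_1},\overline{\xi_2})\in\Consv^{\phi_1}(S_1)\oplus\Consv^{\phi_2}(S_2)$. This descends to the quotient because a constant function on $S_1\vee S_2$ restricts to constants on each factor. For injectivity, if both $\xi_1$ and $\xi_2$ are constant, the matching condition $\xi_1(*_1)=\xi_2(*_2)$ forces these constants to be equal, so $\xi$ is globally constant; for surjectivity, given representatives $\eta_1,\eta_2$ of a target pair I would subtract the constant $\eta_2(*_2)-\eta_1(*_1)$ from $\eta_2$, which leaves its class unchanged and arranges $\eta_1(*_1)=\eta_2(*_2)$, so that $\eta_1$ and $\eta_2$ glue to a conserved quantity on the wedge mapping to the prescribed pair. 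This yields the asserted isomorphism, and the dimension formula $c_{\phi_1\vee\phi_2}=c_{\phi_1}+c_{\phi_2}$ follows immediately. The only real subtlety — and hence the ``hard part'', though it is mild — is this bookkeeping with constant functions and the single linear relation coming from the shared basepoint; the genuine geometric content lies entirely in the triviality of the swap edges.
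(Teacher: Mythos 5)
Your proof is correct and takes essentially the same approach as the paper's: both reduce conservation on the wedge to conservation of the restrictions to $S_1$ and $S_2$ (the swap edges $(s,s')\lra(s',s)$ being vacuous) together with the bookkeeping of constants at the shared basepoint. The only difference is one of direction and packaging: you define the restriction map $\Consv^{\phi_1\vee\phi_2}(S_1\vee S_2)\to\Consv^{\phi_1}(S_1)\oplus\Consv^{\phi_2}(S_2)$ and check bijectivity via the fibre-product description, whereas the paper constructs its inverse (extension by zero of representatives normalized to vanish at the basepoint) and then proves surjectivity by restricting a normalized conserved quantity.
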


\begin{proof}
	For any conserved quantity $\xi_1\in\Consv^{\phi_1}(S_1)$,
	take a representative so that $\xi_1(*_1)=0$.
	Consider the function $\xi$ on $S\coloneqq S_1\vee S_2$ given by
	$\xi(s)\coloneqq\xi_1(s)$ if $s\in S_1$ and $\xi(s)\coloneqq 0$ if $s\in S_2$.
	Note that $\xi$ is well-defined since $\xi(*)=\xi_1(*_1)=0$.
	Then for $\bss,\s'\in S_1\times S_1\subset S\times S$ 
	such that $(\s,\s')\in\phi_1$, we have $\xi_{S}(\s)=(\xi_1)_{S_1}(\s)=(\xi_1)_{S_1}(\s')=\xi_S(\s')$,
	and for $\s,\s'\in S_2\times S_2$, we have $\xi_S(\s)=0=\xi_S(\s')$.   Moreover, $\xi_S(s_1,s_2)=
	\xi_1(s_1)=\xi_S(s_2,s_1)$ for any $s_1\in S_1$ and $s_2\in S_2$.
	This shows that $\xi\in\C(S)$, and the correspondence $\xi_1\mapsto\xi$ gives an injection
	$\Consv^{\phi_1}(S_1)\hookrightarrow\C(S)$.
	Similar construction for $\xi_2\in\Consv^{\phi_2}(S_2)$ gives an injection
	\begin{equation}\label{eq: injection}
		\Consv^{\phi_1}(S_1)\oplus\Consv^{\phi_2}(S_2)\hookrightarrow\C(S).
	\end{equation}
	Next, consider any conserved quantity  $\xi\in\C(S)$  normalized so that $\xi(*)=0$.
	The restriction of $\xi$ to $S_1$ and $S_2$ give
	conserved quantities $\xi_1\in\Consv^{\phi_1}(S_1)$ and $\xi_2\in\Consv^{\phi_2}(S_2)$
	such that $\xi=\xi_1+\xi_2$ with respect to the inclusion \eqref{eq: injection}.
	This proves that \eqref{eq: injection} is an isomorphism as desired.
\end{proof}

\begin{remark}
	For any integer $\kappa\geq 1$ and $S_\kappa=\{0,1,\ldots,\kappa\}$, let $(S_\kappa,\phi^\kappa_\ms)$
	be the multi-species exclusion interaction.  
	For integers $\kappa,\kappa'\geq 1$, consider the
	wedge sum $S_{\kappa}\vee S_{\kappa'}$ along
	$0\in S_\kappa$ and $0\in S_{\kappa'}$.
	Then the bijection $S_{\kappa}\vee S_{\kappa'}\cong S_{\kappa+\kappa'}$
	given by mapping
	$j\in S_\kappa$ to $j\in S_{\kappa+\kappa'}$ and
	 $j\in S_{\kappa'}$ such that $j\geq 1$
	to $j+\kappa\in S_{\kappa+\kappa'}$
	induces an isomorphism of interactions
	\[
		 \phi^{\kappa+\kappa'}_\ms
		\cong\phi_\ms^\kappa\vee\phi^{\kappa'}_\ms.
	\]
	In particular, noting that $(S_1,\phi^1_\ms)$ coincides with the exclusion interaction $(S_1,\phiEX)$, we have
	\[
		\phi^{\kappa}_\ms\cong\vee^\kappa\phiEX\coloneqq\phiEX\vee\cdots\vee\phiEX
	\]
	for any integer $\kappa\geq 1$, where the right hand side is the $\kappa$-fold
	wedge sum along $0\in S_1$ of the exclusion interaction $(S_1,\phiEX)$.
	In this case, 
	the bijection $S_1\vee\cdots\vee S_1\cong S_\kappa$ is given by mapping
	$1\in S_1$ in the $j$-th component of $S_1\vee\cdots\vee S_1$ 
	to $j\in S_\kappa$.
\end{remark}

\begin{figure}[htbp]
	\begin{center}
		\includegraphics[width=0.8\linewidth]{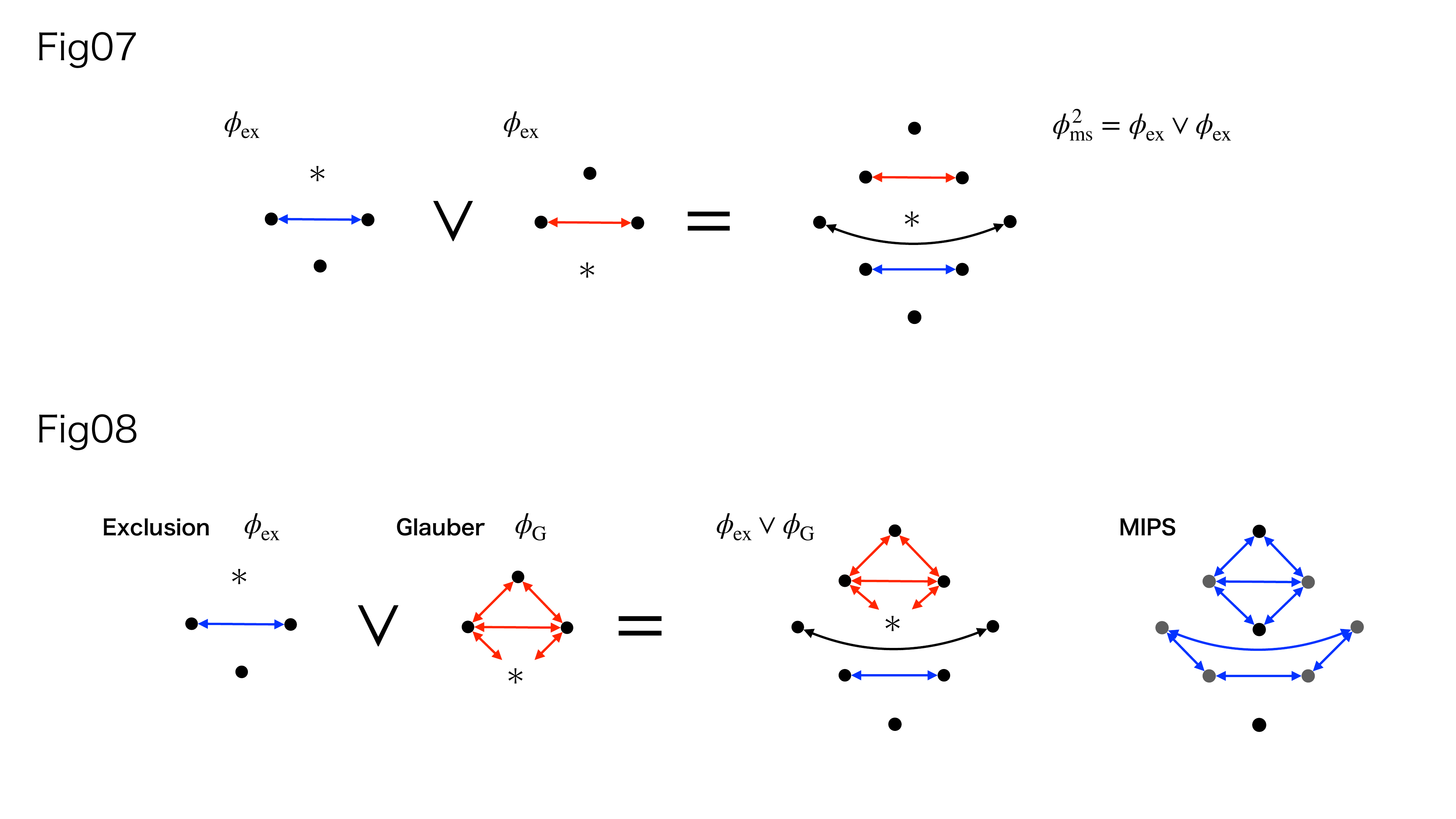}
		\caption{The wedge sum $\phiEX\vee\phiEX$ of two exclusion interactions 
		along $*$ gives the multi-species interaction $\phi^2_\ms$. The blue arrow indicates the
		edge coming from the first exclusion, the red arrow indicates the
		edge coming from the second exclusion and the black arrow indicates
		the new edge added via definition of the wedge sum.}
		\label{Fig: 7}
	\end{center}
\end{figure}

We next consider the lattice gas with energy interaction, which is obtained as the wedge sum
of the exclusion interaction and the $\kappa$-exclusion interaction.
This interaction underlies the process studied by Nagahata in \cite{N03}.

\begin{example}\label{def: LGE}
	For any integer $\kappa\geq 1$, let $S_\kappa=\{0,1,\ldots,\kappa\}$.
	We let $S_1\vee S_{\kappa-1}$ be the wedge sum of $S_1$ and $S_{\kappa-1}$ along $1\in S_1$ and $0\in S_{\kappa-1}$.
	We define the \emph{lattice gas with energy}
	to be the interaction
	\[
		\phikLGE\coloneqq\phiEX\vee\phikmEX
	\]
	on $S_1\vee S_{\kappa-1}$.
\end{example}

The lattice gas with energy has the following interpretation.

\begin{lemma}\label{lem: LGE}
	For $\kappa\geq 1$, let $\phikLGE$ be the lattice gas with energy interaction
	defined in \cref{def: LGE}.
	Then $(S_1\vee S_{\kappa-1},\phikLGE)$ is isomorphic to the interaction 
	$(S_{\kappa},\phi)$ for $S_{\kappa}=\{0,1,\ldots,\kappa\}$
	and $\phi\subset (S_{\kappa}\times S_{\kappa})\times (S_{\kappa}\times S_{\kappa})$
	given by 
	\begin{align*}
			(j, k)&\lra(j-1,k+1) \qquad \text{$j\geq 2$ and $k< \kappa$},\\
			(j,k)&\lra(k,j)  \hspace{2.1cm} j=0 \ \text{or}  \ 1, k \in S_{\kappa}, j\neq k 
	\end{align*}
	Moreover, we have $\cp=2$, and
	 $\C(S_{\kappa})$ is spanned by
	$\xi_P$ and $\xi_E$,	
	where
	\begin{align*}
		\xi_P(j)&\coloneqq 
		\begin{cases}
			1 & j\geq 1\\
			0 &\text{otherwise},
		\end{cases}
		&
		\xi_E(j)\coloneqq j
	\end{align*}
	for any $j\in S_\kappa$.
	In particular,  $\phikLGE$ 
	models the movement of particles with maximal energy $\kappa$.
	Then the conserved quantity $\xi_P$ gives the number of particles,
	and $\xi_E$ gives the energy.
\end{lemma}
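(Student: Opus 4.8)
The plan is to exhibit an explicit bijection $b\colon S_1\vee S_{\kappa-1}\to S_\kappa$, check that $b\times b$ carries $\phikLGE$ to $\phi$ edge-for-edge, and then read off the conserved quantities from \cref{prop: WS}. Recall the wedge is taken along $1\in S_1$ and $0\in S_{\kappa-1}$, so the distinguished point $*$ is the common class of $1\in S_1$ and $0\in S_{\kappa-1}$. I would define $b$ by sending $0\in S_1$ to $0$, the wedge point $*$ to $1$, and $m\in S_{\kappa-1}$ with $m\geq 1$ to $m+1$; equivalently $b$ restricts to $m\mapsto m+1$ on all of $S_{\kappa-1}$ (consistent at $*$, since $b(*)=1=0+1$) and sends the remaining vertex $0\in S_1$ to $0$. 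This is visibly a bijection of sets, so $b\times b$ is a bijection of $(S_1\vee S_{\kappa-1})\times(S_1\vee S_{\kappa-1})$ onto $S_\kappa\times S_\kappa$.

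For the isomorphism, I would match the three families of edges of $\phikLGE=\phiEX\vee\phikmEX$ from \cref{def: WS} against the two families defining $\phi$. Applying $b\times b$ to the cross edges $(0,s')\lra(s',0)$ with $s'\in S_{\kappa-1}$ yields precisely $(0,k)\lra(k,0)$ for all $k\in\{1,\dots,\kappa\}$ (the $\phiEX$-edge $(0,*)\lra(*,0)$ being the subsumed case $k=1$), which is the swap family of $\phi$ with first coordinate $0$; likewise the reverse $\phiEX$-edge together with the cross edges $(*,s')\lra(s',*)$ produce $(1,k)\lra(k,1)$ for $k\in\{0,2,\dots,\kappa\}$, the swap family with first coordinate $1$, while the degenerate cross pair $s=s'=*$ gives only the trivial loop at $(1,1)$, which I would discard. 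Applying $b\times b$ to the $\phikmEX$-edges $(j,k)\lra(j-1,k+1)$ of \cref{def: GE} shifts both coordinates by one and yields exactly $(J,K)\lra(J-1,K+1)$ with $J\in\{2,\dots,\kappa\}$ and $K\in\{1,\dots,\kappa-1\}$, the energy-exchange family of $\phi$. The one point demanding care is that these energy exchanges occur only when the receiving site is already occupied, i.e. for $k\geq 1$: the move of a particle into an empty neighbour, $(j,0)\lra(0,j)$, is \emph{not} an instance of the first family but is already supplied by the swap family with first coordinate $0$, and correspondingly the first family of $\phi$ must be read with $1\le k<\kappa$, since $(j,0)\lra(j-1,1)$ would fail to conserve particle number. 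As every edge of $\phikLGE$ maps to an edge of $\phi$ and conversely, $b$ is an isomorphism of the associated graphs, hence an isomorphism of interactions in the sense of \cref{def: equivalence}.

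For the conserved quantities I would invoke \cref{prop: WS}, which gives $\Consv^{\phikLGE}(S_1\vee S_{\kappa-1})=\Consv^{\phiEX}(S_1)\oplus\Consv^{\phikmEX}(S_{\kappa-1})$; since each summand is one-dimensional by \cref{def: EI} and \cref{def: GE}, this yields $\cp=2$ for $\kappa\geq 2$. Rather than transporting the two summands through $b$ by hand, I would verify directly on $S_\kappa$ that $\xi_P$ and $\xi_E$ are conserved: on the swap family $\xi(j)+\xi(k)=\xi(k)+\xi(j)$ trivially for both, and on the energy-exchange family $\xi_E(j)+\xi_E(k)=j+k=(j-1)+(k+1)=\xi_E(j-1)+\xi_E(k+1)$, while $\xi_P$ reads $1+1=1+1$ because all four of $j,k,j-1,k+1$ lie in $\{1,\dots,\kappa\}$ (using $j\geq 2$ and $k\geq 1$). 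Evaluating on $0,1,2$ shows $\xi_P$ and $\xi_E$ are linearly independent modulo constants, so together with $\cp=2$ they form a basis of $\C(S_\kappa)$, whence the interpretations of $\xi_P$ as particle number and $\xi_E$ as energy.

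I would flag that $\kappa=1$ is genuinely degenerate: there $\phikmEX$ is the interaction on the one-point set $S_0=\{0\}$, contributing nothing, and $\xi_P=\xi_E$ on $\{0,1\}$, so $\cp=1$; thus the assertion $\cp=2$ is the statement for $\kappa\geq 2$. The only place real care is needed is the edge bookkeeping of the second paragraph, in particular reconciling the index ranges coming out of the wedge—where the cross edges furnish the hops into empty sites and the $\phikmEX$-factor furnishes the energy exchanges with $k\geq 1$—with the two families as written; everything else is routine.
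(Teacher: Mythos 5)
Your proof is correct and takes essentially the same route as the paper's: the identical bijection ($0\mapsto 0$, $*\mapsto 1$, $m\mapsto m+1$ on $S_{\kappa-1}$) followed by \cref{prop: WS} for $\cp=2$, the only differences being that the paper asserts the edge correspondence without the detailed bookkeeping you carry out, and that it identifies the basis by transporting the generators of $\Consv^{\phiEX}(S_1)$ and $\Consv^{\phikmEX}(S_{\kappa-1})$ through the identification instead of your direct verification of $\xi_P,\xi_E$ plus a dimension count. Both of your flags are legitimate and are not addressed by the paper's proof: the first family in the statement must indeed be read with $1\leq k<\kappa$ (otherwise the edge $(j,0)\lra(j-1,1)$ would violate conservation of $\xi_P$), and for $\kappa=1$ the summand $\Consv^{\phikmEX}(S_{\kappa-1})$ is zero, so $\cp=1$ rather than $2$.
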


\begin{proof}
	We have a bijection $S_1\vee S_{\kappa-1}\cong S _{\kappa}$, 
	given by mapping $0,1\in S_1$ respectively to $0,1\in S_{\kappa}$,
	and $j\in S_{\kappa-1}$ to $j+1\in S_{\kappa}$.
	Then $\phikLGE$ corresponds to $\phi$ through this identification.
	The fact that $\cp=2$ follows from the isomorphism
	\[
		\Consv^{\phikLGE}(S_{\kappa})\cong\Consv^{\phiEX}(S_1)\oplus\Consv^{\phikmEX}(S_{\kappa-1})
	\]
	given in  \cref{prop: WS}.
	The conserved quantity $\xi\in\Consv^{\phiEX}(S_1)$
	given by $\xi(0)=-1$ and $\xi(1)=0$ maps to the conserved quantity
	$\xi'$ on $S_\kappa$ such that $\xi'(0)=-1$ and $\xi'(j)=0$ for $j\geq 1$,
	which by addition of $1$ is equivalent to
	$\xi_P\in\Consv^{\phikLGE}(S_{\kappa})$.
	Furthermore,
	the conserved quantity $\xi\in\Consv^{\phikmEX}(S_{\kappa-1})$
	given by $\xi(j)=j$ for $j\in S_{\kappa-1}$ maps to a conserved quantity
	 equivalent via addition by $1$ to 
	$\xi_E\in\Consv^{\phikLGE}(S_{\kappa})$.
\end{proof}

\begin{remark}
	Due to \cref{lem: LGE}, we will often denote 
	the lattice gas with energy interaction
	$(S_1\vee S_{\kappa-1},\phikLGE)$ as $(S_\kappa,\phikLGE)$.
	The difference between $\phikLGE$ and $\phikEX$ on $S_\kappa$
	is that 
	\begin{align*}
		(1,k)&\lra_{\phikEX} (0,k+1),&
		(1,k)&\nleftrightarrow_{\phikLGE} (0,k+1) 
	\end{align*}
	 for any $0<k< \kappa$.  Note that we have an isomorphism $(S_2,\phitwLGE)\cong (S_2,\phi^2_\ms)$.
	 In other words, lattice gas with energy interaction coincides with the multi-species exclusion interaction
	 if $\kappa=2$.
\end{remark}

\begin{figure}[htbp]
	\begin{center}
		\includegraphics[width=0.8\linewidth]{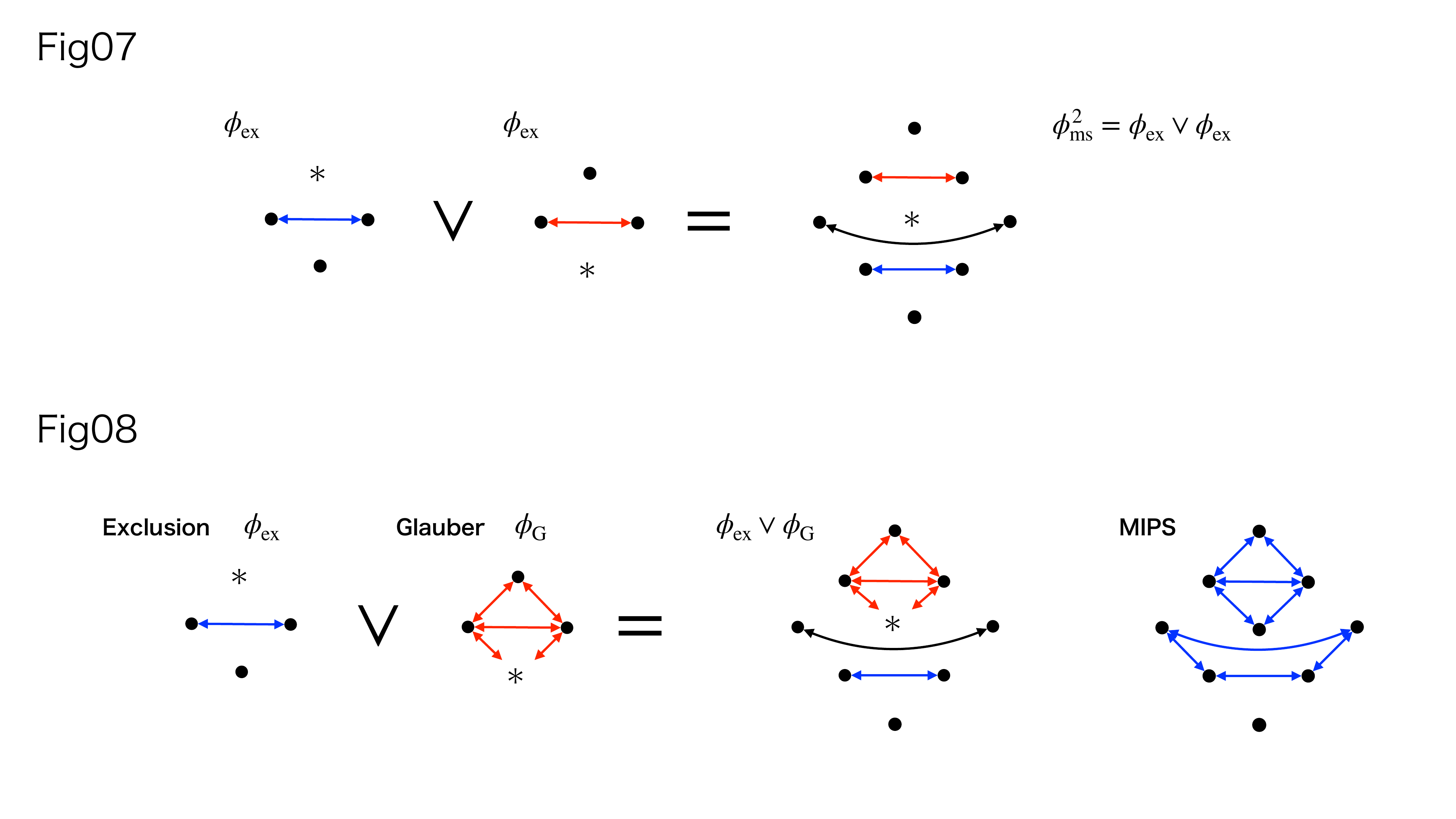}
		\caption{
		The wedge sum of the exclusion interaction $\phiEX$
		and the Glauber interaction $\phi_{\operatorname{G}}$
		is easily checked to be equivalent to the MIPS interaction described in the figure
		underlying the MIPS process studied in \cites{ATK22,KEBT18}.}
		\label{Fig: 8}
	\end{center}
\end{figure}

We next define the notion of box products of interactions to construct new interactions.

\begin{definition}\label{def: product}
	Let $(S_1,\phi_1)$ and $(S_2,\phi_2)$ be interactions.  We let 
	$S_1\times S_2$ be the product of sets, and we define the \emph{box product}
	$\phi_1\square \phi_2$ to be the interaction on $S_1\times S_2$
	given by
	\[
		\phi_1\square\phi_2\coloneqq\{((\s_1,\s_2),(\s'_1,\s'_2)) \mid 
		\text{$\s_1=\s'_1$ and $(\s_2,\s'_2)\in\phi_2$ or $\s_2=\s'_2$ and $(\s_1,\s'_1)\in\phi_1$}\}.
	\]
	We denote $(S_1\times S_2,\phi_1\square\phi_2)=(S_1,\phi_1)\square (S_2,\phi_2)$.
\end{definition}

The box product is well-behaved for exchangeable interactions, defined as follows.

\begin{definition}\label{def: exchangeable}
	We say that an interaction $(S,\phi)$ is \emph{exchangeable},
	if $(s_1,s_2)\lrs(s_2,s_1)$ for any $(s_1,s_2)\in S\times S$.
\end{definition}

The $\kappa$-exclusion and the multi-species interactions give examples of exchangeable interactions.
Moreover, we have the following:

\begin{lemma}
	Any interaction is equivalent to an interaction which is exchangeable.
\end{lemma}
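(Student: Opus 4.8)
The plan is to show that, given any interaction $(S,\phi)$, its completion $\hat\phi$ constructed in \cref{lem: completion} is automatically exchangeable, and since $\hat\phi$ is equivalent to $\phi$ by that same lemma, this will suffice. The key observation is that exchangeability is a condition detectable purely through the values of the conserved quantities: for any $(s_1,s_2)\in S\times S$ and any conserved quantity $\xi$, we have $\xi_S(s_1,s_2)=\xi(s_1)+\xi(s_2)=\xi(s_2)+\xi(s_1)=\xi_S(s_2,s_1)$, by commutativity of addition in $\R$. This is precisely the same computation already used in the proof of \cref{prop: smallest} to show $\phi^\kappa_\ms\subset\hat\phi$.

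First I would invoke \cref{lem: completion} to pass from $\phi$ to the equivalent interaction $\hat\phi$. Next, I would note that by the defining formula of the completion,
\[
	\hat\phi=\{(\s,\s')\in(S\times S)\times(S\times S)\mid \xi_S(\s)=\xi_S(\s')\,\,\forall\xi\in\C(S)\},
\]
together with the fact recorded in \cref{lem: completion} that the connected components of $(S\times S,\hat\phi)$ are classified exactly by the values of the conserved quantities. Then for any $(s_1,s_2)\in S\times S$, the computation above shows $\xi_S(s_1,s_2)=\xi_S(s_2,s_1)$ for every $\xi\in\C(S)=\Consv^{\hat\phi}(S)$, so $(s_1,s_2)$ and $(s_2,s_1)$ lie in the same connected component of the associated graph, i.e.\ $(s_1,s_2)\lrs_{\hat\phi}(s_2,s_1)$. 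By \cref{def: exchangeable} this is precisely the statement that $\hat\phi$ is exchangeable.

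To conclude, I would combine the two facts: $\phi\simeq\hat\phi$ by \cref{lem: completion}, and $\hat\phi$ is exchangeable by the argument just given. Hence every interaction is equivalent to an exchangeable one, which is the claim.

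There is no serious obstacle here; the whole content is that exchangeability depends only on the connected-component structure as recorded by conserved quantities, and the completion $\hat\phi$ already merges any two vertices agreeing on all conserved quantities. The only point requiring a small amount of care is to make sure the equivalence one uses is the one supplied by \cref{lem: completion} on the \emph{same} set $S$ (so the bijection $S\cong S$ is the identity), rather than constructing a genuinely new set of local states; this keeps the logic clean and lets one read off exchangeability of $\hat\phi$ directly from its definition.
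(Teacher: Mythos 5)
Your proposal is correct and matches the paper's own proof essentially verbatim: both pass to the completion $\hat\phi$ of \cref{lem: completion}, use commutativity of addition to get $\xi_S(s_1,s_2)=\xi_S(s_2,s_1)$ for all conserved quantities, and conclude exchangeability plus equivalence. The only cosmetic difference is that the paper reads off the stronger conclusion $(s_1,s_2)\lra_{\hat\phi}(s_2,s_1)$ (an actual edge) directly from the defining formula of $\hat\phi$, whereas you settle for $(s_1,s_2)\lrs_{\hat\phi}(s_2,s_1)$, which is all that \cref{def: exchangeable} requires.
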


\begin{proof}
	For any interaction $(S,\phi)$, consider the completion $(S,\hat\phi)$ given in \cref{lem: completion}.
	Then for any $(s_1,s_2)\in S\times S$, we have $\xi_S(s_1,s_2)=
	\xi(s_1)+\xi(s_2)=
	\xi_S(s_2,s_1)$ for any $\xi\in\C(S)$.
	Then by the definition of the completion, we have $(s_1,s_2)\lra_{\hat\phi}(s_2,s_1)$.
	This shows that $(S,\hat\phi)$ is exchangeable.
	Our assertion follows from the fact that $(S,\hat\phi)$ is equivalent to $(S,\phi)$.
\end{proof}

We prove that a box product of exchangeable interactions
is an interaction such that the space of conserved quantities 
is the direct sum of the space of conserved quantities of each of the components.

\begin{figure}[htbp]
	\begin{center}
		\includegraphics[width=1\linewidth]{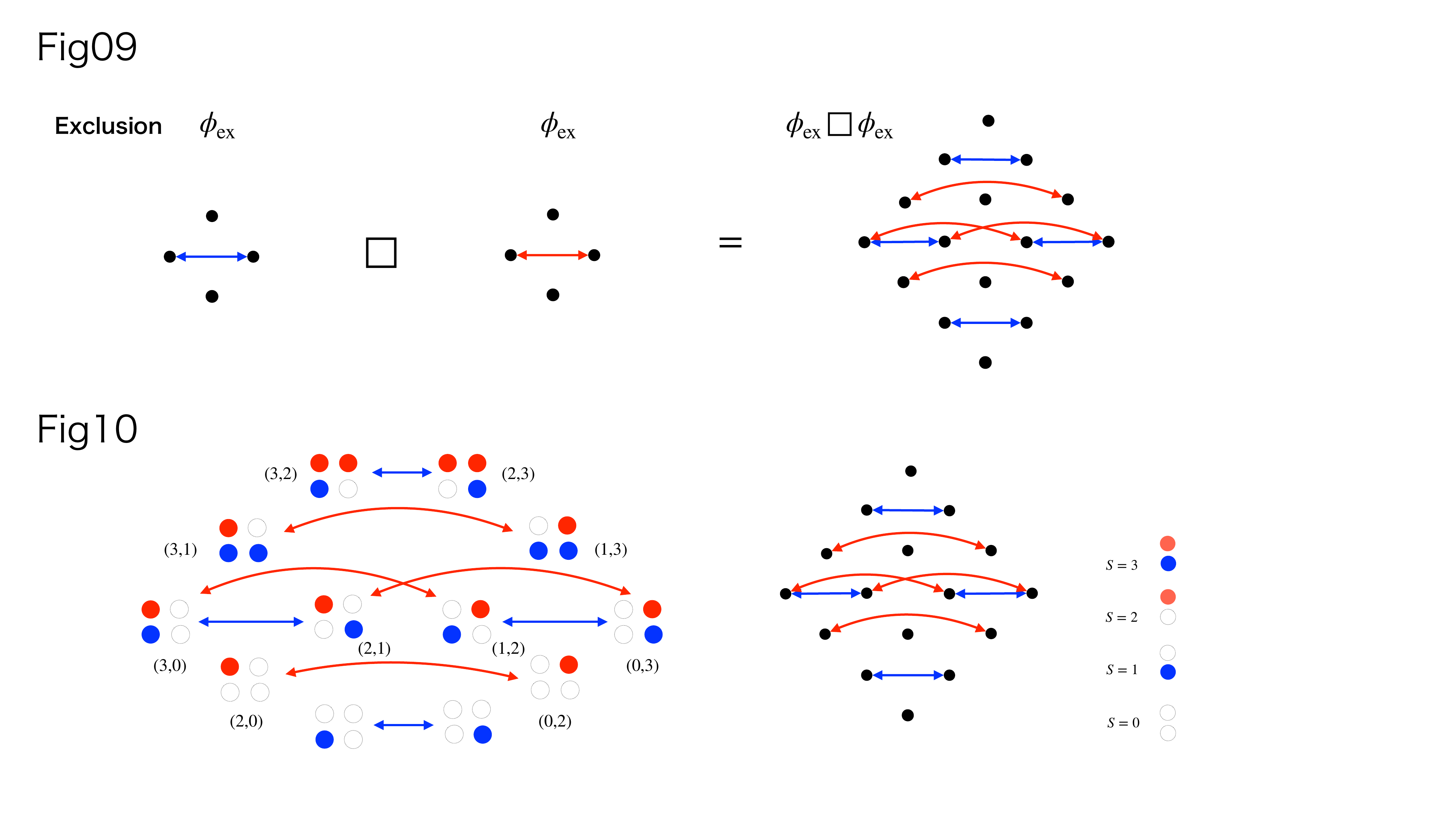}
		\caption{The figure represents the box product $\phiEX\square\phiEX$.  The blue arrow indicates the
		edges coming from the first exclusion and the red arrow indicates the edges coming from the
		second exclusion.}\label{{Fig: 07}}
	\end{center}
\end{figure}

\begin{proposition}\label{prop: dimension} 
	Let $(S_1,\phi_1)$ and $(S_2,\phi_2)$ be interactions.
	 If either $\phi_1$ or $\phi_2$ is exchangeable, then
	 we have a canonical isomorphism
		\[
			\Consv^{\phi_1\square\phi_2}(S_1\times S_2)
			\cong\Consv^{\phi_1}(S_1)\oplus \Consv^{\phi_2}(S_2)
		\]
	for the box product 
	$(S_1\times S_2,\phi_1\square\phi_2)=(S_1,\phi_1)\square(S_2,\phi_2)$ 
	of \cref{def: product}.
	In particular, we have $c_{\phi_1\square\phi_2}=c_{\phi_1}+c_{\phi_2}$.
	Furthermore, if 	both $\phi_1$ and $\phi_2$ are exchangeable, then 
	the box product
	$\phi_1\square\phi_2$ is also exchangeable.
\end{proposition}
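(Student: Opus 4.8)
The plan is to write down the evident candidate for the isomorphism and verify that exchangeability makes it surjective. Define the $\R$-linear map
\[
	\Psi\colon\Consv^{\phi_1}(S_1)\oplus\Consv^{\phi_2}(S_2)\longrightarrow\Consv^{\phi_1\square\phi_2}(S_1\times S_2),\qquad (\xi_1,\xi_2)\longmapsto\xi_1\oplus\xi_2,
\]
where $(\xi_1\oplus\xi_2)(a,b)\coloneqq\xi_1(a)+\xi_2(b)$ for $(a,b)\in S_1\times S_2$. First I would check that $\xi_1\oplus\xi_2$ really is a conserved quantity of the box product by testing it against the two edge types of \cref{def: product}: on an edge coming from $\phi_1$ the $S_2$-coordinates of both sites are frozen, so the $\xi_2$-contribution to $(\xi_1\oplus\xi_2)_{S_1\times S_2}$ is unchanged while the $\xi_1$-contribution is preserved because $\xi_1\in\Consv^{\phi_1}(S_1)$, and symmetrically on an edge coming from $\phi_2$. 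Since replacing $\xi_1$ or $\xi_2$ by a representative differing by a constant alters $\xi_1\oplus\xi_2$ only by a constant, $\Psi$ descends to the quotients by constants. Injectivity is then immediate: if $\xi_1\oplus\xi_2$ is constant, then freezing $b$ and varying $a$ forces $\xi_1$ to be constant, and likewise $\xi_2$.

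The heart of the matter is surjectivity, and this is where exchangeability enters; assume $\phi_1$ is exchangeable, the case of $\phi_2$ being symmetric. Given $\xi\in\Consv^{\phi_1\square\phi_2}(S_1\times S_2)$, I would unwind \cref{lem: connected} on the $\phi_1$-type edges to record that for every $\phi_1$-edge $(a_1,a_2)\lra(a_1',a_2')$ and all $b_1,b_2\in S_2$,
\begin{equation}\label{eq: box-conservation}
	\xi(a_1,b_1)+\xi(a_2,b_2)=\xi(a_1',b_1)+\xi(a_2',b_2),
\end{equation}
and analogously for $\phi_2$-type edges. In particular, for each fixed pair $b_1,b_2$ the function $(a_1,a_2)\mapsto\xi(a_1,b_1)+\xi(a_2,b_2)$ on $S_1\times S_1$ is constant on $\phi_1$-connected components. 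Applying the exchangeability relation $(a_1,a_2)\lrs_{\phi_1}(a_2,a_1)$ of \cref{def: exchangeable} to this function yields $\xi(a_1,b_1)+\xi(a_2,b_2)=\xi(a_2,b_1)+\xi(a_1,b_2)$, which rearranges to the statement that $a\mapsto\xi(a,b_1)-\xi(a,b_2)$ is independent of $a$.

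Fixing basepoints $a_0\in S_1$, $b_0\in S_2$ and taking $b_2=b_0$ then gives the separation $\xi(a,b)=\xi(a,b_0)+\xi(a_0,b)-\xi(a_0,b_0)$ for all $(a,b)$. Setting $\xi_1(a)\coloneqq\xi(a,b_0)$ and $\xi_2(b)\coloneqq\xi(a_0,b)-\xi(a_0,b_0)$, the diagonal case $b_1=b_2=b_0$ of \eqref{eq: box-conservation} shows $\xi_1\in\Consv^{\phi_1}(S_1)$, and the diagonal case $a_1=a_2=a_0$ of the $\phi_2$-counterpart shows $\xi_2\in\Consv^{\phi_2}(S_2)$; by construction $\Psi(\xi_1,\xi_2)=\xi$ on the nose. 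Hence $\Psi$ is an isomorphism and the dimension formula $c_{\phi_1\square\phi_2}=c_{\phi_1}+c_{\phi_2}$ follows. The main obstacle is exactly this separation step: the diagonal conditions ($b_1=b_2$) alone only say that each slice of $\xi$ is conserved, which permits non-separable ``mixed'' solutions, and it is the full off-diagonal force of \eqref{eq: box-conservation} fed through the exchangeability symmetry that eliminates them.

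For the final assertion I would use that the associated graph of $\phi_1\square\phi_2$ is the Cartesian product of the graphs of $\phi_1$ and $\phi_2$, whose connected components are products of components, so that $(\s_1,\s_2)\lrs(\s_1',\s_2')$ holds in the box product if and only if $\s_1\lrs_{\phi_1}\s_1'$ and $\s_2\lrs_{\phi_2}\s_2'$. For $(a,b),(a',b')\in S_1\times S_2$ the desired relation $((a,b),(a',b'))\lrs((a',b'),(a,b))$ thus reduces to $(a,a')\lrs_{\phi_1}(a',a)$ and $(b,b')\lrs_{\phi_2}(b',b)$, which hold by the exchangeability of $\phi_1$ and $\phi_2$ respectively; hence $\phi_1\square\phi_2$ is exchangeable.
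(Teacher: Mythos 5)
Your proposal is correct and follows essentially the same route as the paper: both establish injectivity of the obvious sum map directly, and both prove surjectivity by using exchangeability of one factor inside the box-product graph to force the separation $\xi(a,b)=\xi(a,b_0)+\xi(a_0,b)-\xi(a_0,b_0)$, then checking the basepoint slices are conserved. Your final step likewise matches the paper's, which performs exactly the two coordinate swaps your Cartesian-product-of-components observation encodes.
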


\begin{proof}
	For any $\xi_1\in\Consv^{\phi_1}(S_1)$ and $\xi_2\in\Consv^{\phi_2}(S_2)$,
	from the definition of the box product
	$\phi_1\square\phi_2$, the sum
	$\xi\coloneqq\xi_1+\xi_2\colon S_1\times S_2\rightarrow\R$ defined by
	$\xi(s_1,s_2)\coloneqq\xi_1(s_1)+\xi_2(s_2)$ for $(s_1,s_2)\in S_1\times S_2$
	gives a conserved quantity for $\phi_1\square\phi_2$ on $S\coloneqq S_1\times S_2$.
	This gives a linear injection
	\begin{equation}\label{eq: inclusion}
			\Consv^{\phi_1}(S_1)\oplus \Consv^{\phi_2}(S_2)
			\hookrightarrow\Consv^{\phi_1\square\phi_2}(S).
	\end{equation}
	Next, assume without loss of generality that $\phi_2$ is exchangeable.
	In order to prove that \cref{eq: inclusion} is surjective,
	consider any $\xi\in\Consv^{\phi_1\square\phi_2}(S)$.
	We fix $*_1\in S_1$ and $*_2\in S_2$, and by replacing $\xi$ with $\xi-\xi(*_1,*_2)$,
	we can assume that we have $\xi(*_1,*_2)=0$.
	Let $\xi_1(s_1)\coloneqq\xi(s_1,*_2)$ and $\xi_2(s_2)\coloneqq\xi(*_1,s_2)$
	for any $(s_1,s_2)\in S_1\times S_2$.  Since $\xi$ is a conserved quantity for 
	$\phi_1\square\phi_2$, the definition of the box product implies that $\xi_1$ and
	$\xi_2$ are conserved quantities for $\phi_1$ and $\phi_2$.
	Since we have assumed that $\phi_2$ is exchangeable, 
	we have $(s_2,*_2)\lrs_{\phi_2}(*_2,s_2)$ for any $s_2\in S_2$.
	Then the definition of the box product gives 
	$((s_1,s_2),(*_1,*_2))\lrs_{\phi_1\square\phi_2}((s_1,*_2),(*_1,s_2))$.
	Since $\xi$ is a conserved quantity for $\phi_1\square\phi_2$, this shows that
	\[
		\xi(s_1,s_2)=\xi(s_1,s_2)+\xi(*_1,*_2)=\xi(s_1,*_2)+\xi(*_1,s_2)=\xi_1(s_1)+\xi_2(s_2)
	\]
	for any $(s_1,s_2)\in S_1\times S_2$.  This implies that $\xi=\xi_1+\xi_2$,
	hence that $\xi$ is in the image of the map \cref{eq: inclusion}.
	This proves that \cref{eq: inclusion} is an isomorphism as desired.
	
	Next, assume that $\phi_1$ and $\phi_2$ are exchangeable,
	and consider $((s_1,s_2),(s'_1,s'_2))\in S\times S$.
	Since $\phi_1$ and $\phi_2$ are exchangeable, 
	we have $(s_1,s'_1)\lrs_{\phi_1}(s'_1,s_1)$
	and $(s_2,s'_2)\lrs_{\phi_2}(s'_2,s_2)$. 
	 By definition
	of the box product $\phi_1\square\phi_2$, this implies that 
	$((s_1,s_2),(s'_1,s'_2))\lrs_{\phi_1\square\phi_2}
	((s'_1,s_2),(s_1,s'_2))$ and $((s'_1,s_2),(s_1,s'_2))\lrs_{\phi_1\square\phi_2}
	((s'_1,s'_2),(s_1,s_2))$,
	hence $((s_1,s_2),(s'_1,s'_2))\lrs_{\phi_1\square\phi_2}((s'_1,s'_2),(s_1,s_2))$.
	This proves that $(S,\phi_1\square\phi_2)$ is exchangeable as desired.
\end{proof}

Using the box product, we can define
the $N$-lane particle interactions.

\begin{example}\label{def: RP}
	For any integer $\kappa\geq 1$ and $S_\kappa=\{0,1,\ldots,\kappa\}$, let 
	$(S_\kappa,\phikEX)$ be the $\kappa$-exclusion interaction of \cref{def: GE}.
	For an integer $N\geq 1$,
	we define the \emph{$N$-lane $\kappa$-exclusion interaction} 
	to be the interaction
	\[
		\phi^{\square N}_{\text{$\kappa$-$\EX$}}\coloneqq\phikEX\square\cdots\square\phikEX,
	\]
	where the right hand side of the above equality is the $N$-fold box product
	of the $\kappa$-exclusion interaction $\phikEX$.
	In particular, we have $\phi^{\square 1}_{\text{$\kappa$-$\EX$}}=\phikEX$.
\end{example}

For the $N$-lane $\kappa$-exclusion interaction,
the state $(m_1,\ldots,m_N)\in \{0,1,\ldots,\kappa\}^N$ signifies the state with $m_i$ particles in lane $i$ for $i=1,\ldots, N$.

\begin{figure}[htbp]
	\begin{center}
		\includegraphics[width=1\linewidth]{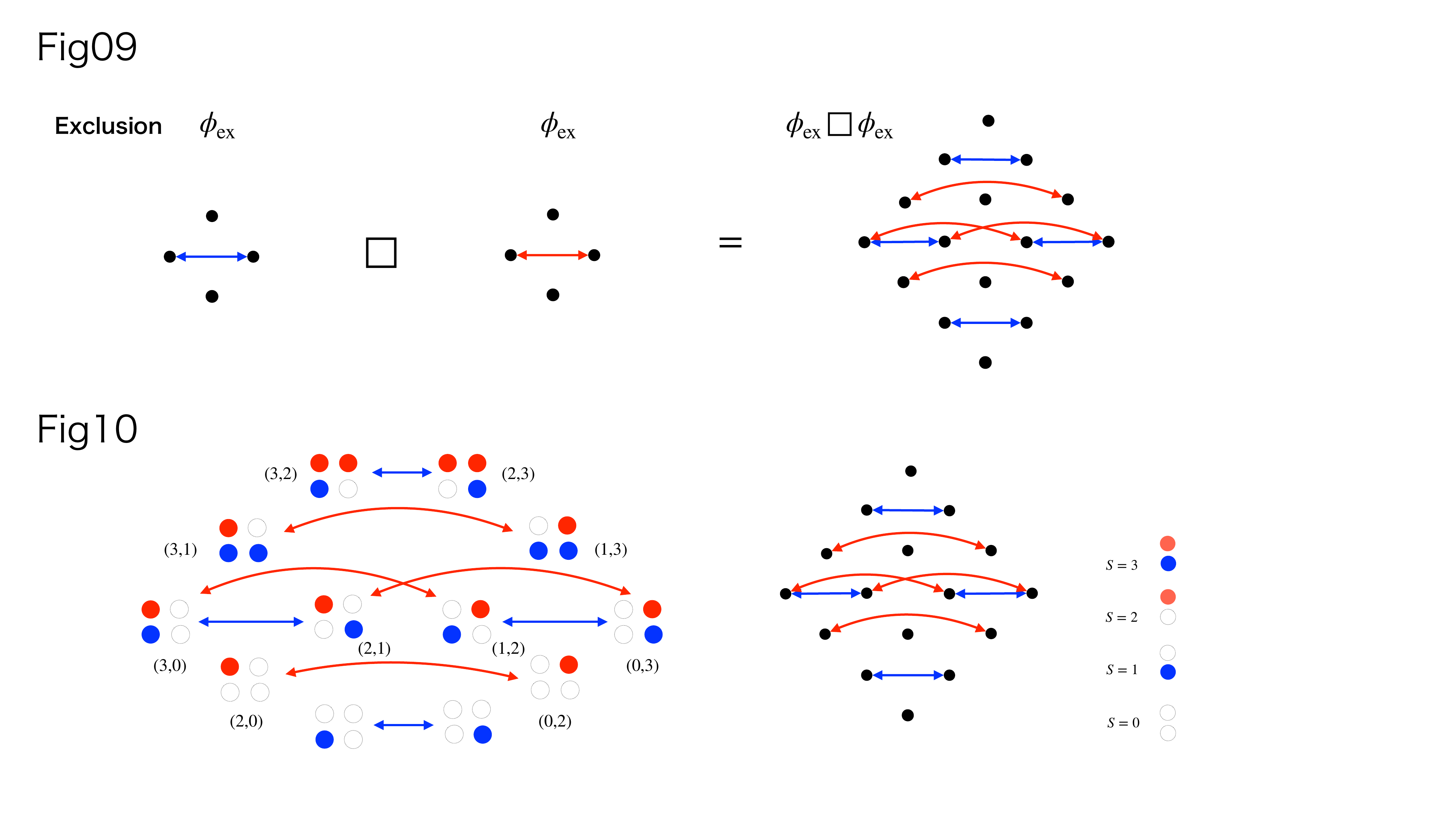}
		\caption{The figure describes the two-lane $1$-exclusion interaction $\phi^{\square 2}_\EX=\phiEX\square\phiEX$.
		The blue particles are in the first lane and the red particles are in the second lane.
		The particles move in their respective lanes.
		The movements in each lane coincides with the exclusion interaction.}\label{{Fig: 08}}
	\end{center}
\end{figure}

\begin{lemma}\label{lem: RL}
	The $N$-lane particle  interaction
	$\phi^{\square N}_{\text{$\kappa$-$\EX$}}\subset S_\kappa^N\times S_\kappa^N$ for $S_\kappa=\{0,1,\ldots,\kappa\}$ is given by 
	\begin{align*}
		((m_1,\ldots,m_i,\ldots,m_N),& (n_1,\ldots,n_i,\ldots,n_N))\\
		&\lra
		((m_1,\ldots,m_i-1,\ldots,m_N), (n_1,\ldots,n_i+1,\ldots,n_N))
	\end{align*}
	for $(m_1,\ldots,m_N)$ and $(n_1,\ldots,n_N)$ in $S_\kappa^N$ such that $m_i\geq 1$ and $n_i<\kappa$.
	We have $\cp=N$, and $\Consv^{\phi^{\square N}_{\text{$\kappa$-$\EX$}}}(S_\kappa^N)$ is spanned by
	$\xi^i$ for $i=1,\ldots,N$,
	where 
	\begin{align*}
		\xi^i((m_1,\ldots,m_N))&\coloneqq m_i.
	\end{align*}
	The conserved quantity $\xi^i$ gives the number of particles in lane $i$ for $i=1,\ldots,N$.
\end{lemma}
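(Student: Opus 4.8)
The plan is to make the edge set explicit by induction and then obtain the conserved quantities by iterating \cref{prop: dimension}. First I would unwind \cref{def: product} to observe that the associated graph of a box product $\phi_1\square\phi_2$ is the graph-theoretic Cartesian product of the associated graphs of $\phi_1$ and $\phi_2$: an edge alters exactly one of the two factors by an edge of the corresponding interaction while fixing the other. I would identify the vertex set $(S_\kappa\times S_\kappa)^N$ with $S_\kappa^N\times S_\kappa^N$ by regrouping coordinates so that $((m_1,n_1),\ldots,(m_N,n_N))$ corresponds to $((m_1,\ldots,m_N),(n_1,\ldots,n_N))$, and then argue by induction on $N$, writing $\phi^{\square N}_{\text{$\kappa$-$\EX$}}=\phi^{\square (N-1)}_{\text{$\kappa$-$\EX$}}\square\phikEX$. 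Each edge therefore either modifies lane $N$ by a $\kappa$-exclusion move while fixing lanes $1,\ldots,N-1$, or modifies one of lanes $1,\ldots,N-1$ as described by the inductive hypothesis; in all cases exactly one lane $i$ changes by a move $(m_i,n_i)\lra(m_i-1,n_i+1)$, which by \cref{def: GE} requires $m_i\geq 1$ and $n_i<\kappa$. This is precisely the stated edge set.

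Next I would compute the conserved quantities by iterating \cref{prop: dimension}. For this I first record that $\phikEX$ is exchangeable in the sense of \cref{def: exchangeable}: starting from $(j,k)$ the $\kappa$-exclusion moves reach every pair in $S_\kappa\times S_\kappa$ with the same coordinate sum, so in particular $(j,k)\lrs(k,j)$. Applying \cref{prop: dimension} to $\phi^{\square N}_{\text{$\kappa$-$\EX$}}=\phikEX\square\phi^{\square (N-1)}_{\text{$\kappa$-$\EX$}}$, the factor $\phikEX$ is exchangeable, so the direct-sum decomposition holds; moreover both factors are exchangeable, the second by the inductive hypothesis, so the box product is again exchangeable, which sustains the induction. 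This yields $\Consv^{\phi^{\square N}_{\text{$\kappa$-$\EX$}}}(S_\kappa^N)\cong\bigoplus_{i=1}^N\Consv^{\phikEX}(S_\kappa)$, and since each summand is one-dimensional by \cref{def: GE}, we obtain $\cp=N$. To pin down the basis, I would track through the iterated isomorphism the generator $j\mapsto j$ of $\Consv^{\phikEX}(S_\kappa)$ placed in the $i$-th factor and zero elsewhere; since the isomorphism of \cref{prop: dimension} sends $(\xi_1,\xi_2)$ to the function $(s_1,s_2)\mapsto\xi_1(s_1)+\xi_2(s_2)$, this generator maps to $(m_1,\ldots,m_N)\mapsto m_i$, which is exactly $\xi^i$. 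Hence $\xi^1,\ldots,\xi^N$ is the image of a basis and therefore spans $\Consv^{\phi^{\square N}_{\text{$\kappa$-$\EX$}}}(S_\kappa^N)$.

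The argument is essentially two clean inductions, so I do not expect a genuine obstacle; the only point requiring care is the bookkeeping. Specifically, I must fix once and for all the coordinate regrouping identifying $(S_\kappa\times S_\kappa)^N$ with $S_\kappa^N\times S_\kappa^N$ so that the edge description reads correctly, and I must carry the exchangeability of the partial products $\phi^{\square(N-1)}_{\text{$\kappa$-$\EX$}}$ along the induction so that \cref{prop: dimension} is legitimately applicable at every stage. Once the Cartesian-product structure of the box product in \cref{def: product} is made explicit, both the edge enumeration and the conserved-quantity computation reduce to routine verifications.
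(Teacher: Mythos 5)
Your proof is correct and follows the same route the paper intends: the edge description is obtained by unwinding \cref{def: product} (with the coordinate regrouping), and the conserved quantities come from iterating \cref{prop: dimension}, using that $\phikEX$ is exchangeable so the hypothesis of that proposition is met at every stage. The paper's own proof is a one-line appeal to these definitions; you have simply supplied the bookkeeping it leaves implicit, including the harmless extra observation that exchangeability propagates along the induction.
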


\begin{proof}
	This follows from the definition of the boxed product and the $N$-lane $\kappa$-exclusion interaction.
\end{proof}

%
%
\section{Classification of Interactions}\label{sec: classification}
%
%
%

In this section, we give methods to classify interactions for finite $S$.
In what follows, assume that $S$ is finite.
Then $\Consv^{\phi}(S)$ is of finite dimension
for any interaction $(S,\phi)$.
We first prove that
by connecting two distinct connected components of $(S\times S,\phi)$, we can construct a new 
interaction $\wt\phi$ on $S$ containing $\phi$ whose dimension of the space of conserved quantities 
$\dim_\R\Cwt(S)$ is
\emph{one less} than that of $\phi$.

\begin{lemma}\label{prop: less}
	Let $(S,\phi)$ be an interaction.
	For $\s,\s'\in S\times S$, suppose there exists $\xi\in\Consv^{\phi}(S)$ such that $\xi_S(\s)\neq\xi_S(\s')$.
	Then there exists an interaction $\wt\phi$ on $S$ unique up to equivalence such that
	$\phi\subset\wt\phi$, 
	\begin{equation}\label{eq: dim}
		\dim_\R\Cwt(S)=\dim_\R\C(S)-1,
	\end{equation}
	and $\s\lra_{\wt\phi}\s'$. 
	Moreover, if $\xi^1,\ldots,\xi^\cp$ 
	for $\cp\coloneqq\dim_\R\C(S)$ is a basis of
	$\C(S)$, and if we let $k$ be such that
	$\xi^{k}_S(\s)\neq\xi^{k}_S(\s')$, then
	\begin{equation}\label{eq: xi_tilde}
		\wt\xi^i\coloneqq\xi^i-\frac{\xi^i_S(\s')-\xi^i_S(\s)}{\xi^{k}_S(\s')-\xi^{k}_S(\s)}\xi^{k},
		\quad i=1,\ldots,\cp,\quad i\neq k
	\end{equation}
	form a basis of $\Cwt(S)$.
\end{lemma}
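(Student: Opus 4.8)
The plan is to build $\wt\phi$ by adjoining to $\phi$ the single symmetric edge joining $\s$ and $\s'$, that is, $\wt\phi\coloneqq\phi\cup\{(\s,\s'),(\s',\s)\}$. This is a symmetric subset of $(S\times S)\times(S\times S)$ containing $\phi$, so $(S,\wt\phi)$ is an interaction with $\phi\subset\wt\phi$ and $\s\lra_{\wt\phi}\s'$, as required. The first substantive step is to pin down its conserved quantities. Since the edges of $\wt\phi$ are precisely those of $\phi$ together with the new pair $(\s,\s')$, a function $\xi$ satisfies \eqref{eq: 1} on every edge of $\wt\phi$ if and only if it does so on every edge of $\phi$ and in addition $\xi_S(\s)=\xi_S(\s')$. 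Hence
\begin{equation*}
\Cwt(S)=\{\xi\in\C(S)\mid\xi_S(\s)=\xi_S(\s')\}.
\end{equation*}

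To obtain the dimension formula \eqref{eq: dim}, I would introduce the linear functional $L\colon\C(S)\to\R$ given by $L(\xi)\coloneqq\xi_S(\s')-\xi_S(\s)$. It is well defined on the quotient by constants, since replacing $\xi$ by $\xi+c$ shifts both $\xi_S(\s)$ and $\xi_S(\s')$ by $2c$ and hence leaves $L$ unchanged. The hypothesis provides some $\xi$ with $L(\xi)\neq 0$, so $L$ is a nonzero functional whose kernel is exactly $\Cwt(S)$ by the displayed characterization. A nonzero linear functional has kernel of codimension one, which gives $\dim_\R\Cwt(S)=\dim_\R\C(S)-1$.

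For the explicit basis, fix $k$ with $L(\xi^k)=\xi^k_S(\s')-\xi^k_S(\s)\neq 0$. The functions in \eqref{eq: xi_tilde} are precisely $\wt\xi^i=\xi^i-\frac{L(\xi^i)}{L(\xi^k)}\xi^k$, so $L(\wt\xi^i)=0$ and each $\wt\xi^i$ lies in $\Cwt(S)$. There are $\cp-1$ of them, and they are linearly independent: in a vanishing combination $\sum_{i\neq k}a_i\wt\xi^i$, the coefficient of $\xi^i$ relative to the basis $\xi^1,\ldots,\xi^\cp$ is $a_i$, so all $a_i$ must vanish. Since $\dim_\R\Cwt(S)=\cp-1$, these functions form a basis.

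It remains to establish uniqueness up to equivalence. Given any interaction $\wt\phi'$ with $\phi\subset\wt\phi'$, $\s\lra_{\wt\phi'}\s'$, and $\dim_\R\Consv^{\wt\phi'}(S)=\cp-1$, I would compare conserved-quantity spaces through the identity bijection $S\to S$. From $\phi\subset\wt\phi'$ we get $\Consv^{\wt\phi'}(S)\subset\C(S)$, and from $\s\lra_{\wt\phi'}\s'$ every $\xi\in\Consv^{\wt\phi'}(S)$ satisfies $\xi_S(\s)=\xi_S(\s')$, i.e.\ lies in $\ker L=\Cwt(S)$. As both spaces have dimension $\cp-1$, they coincide, and the identity on $S$ yields the equivalence $\wt\phi'\simeq\wt\phi$. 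The steps are all elementary linear algebra; the points demanding the most care are the well-definedness of $L$ on the quotient by constants, and, more subtly, the observation in the uniqueness argument that the two defining properties of any competitor $\wt\phi'$ force its conserved quantities into $\ker L$, which is what upgrades the inclusion to an equivalence via the identity bijection.
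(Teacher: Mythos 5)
Your proof is correct, and its core is the same as the paper's: adjoin the symmetric pair $\{(\s,\s'),(\s',\s)\}$ to $\phi$ and show that the functions in \eqref{eq: xi_tilde} form a basis of $\Cwt(S)$. The presentational difference is that you obtain the dimension drop by identifying $\Cwt(S)$ with the kernel of the nonzero functional $L(\xi)=\xi_S(\s')-\xi_S(\s)$ (correctly checked to be well defined modulo constants), whereas the paper argues in the other direction: it exhibits the $\cp-1$ linearly independent conserved quantities $\wt\xi^i$ inside the proper subspace $\Cwt(S)\subsetneq\C(S)$ and sandwiches the dimension; these are equivalent pieces of elementary linear algebra. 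The more substantive difference is that the paper's written proof stops at \eqref{eq: dim} and the basis claim, leaving the ``unique up to equivalence'' assertion implicit, while you prove it: for any competitor $\wt\phi'$, the inclusion $\phi\subset\wt\phi'$ gives $\Consv^{\wt\phi'}(S)\subset\C(S)$, the edge $\s\lra_{\wt\phi'}\s'$ forces $\Consv^{\wt\phi'}(S)\subset\ker L=\Cwt(S)$, and equality of dimensions upgrades this to equality of subspaces, so the identity bijection on $S$ is an equivalence. Your write-up is therefore complete where the paper's is terse, and there is no gap in your argument.
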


\begin{proof}
	For $\vp\coloneqq(\s,\s')$, the set $\wt\phi\coloneqq\phi\cup\{\vp,\bar\vp\}\subset(S\times S)\times(S\times S)$ is an interaction on 
	$S$ containing $\phi$.   For a basis $\xi^1,\ldots,\xi^\cp$
	of $\C(S)$, 
	there exists $k \in \{1,\ldots,\cp\}$ such that
	$\xi^k_S(\s)-\xi^k_S(\s')\neq 0$.
	We define $\wt\xi^i$ for $i\neq k$ as in \cref{eq: xi_tilde}.
	Then by construction, 
	we have $\wt\xi^i\in\C(S)$, and
	$\wt\xi^i_S(\s)=\wt\xi^i_S(\s')$ for any $i\neq k$.
	This shows that $\wt\xi^i$ are conserved quantities for the 
	interaction $\wt\phi=\phi\cup\{\vp,\bar\vp\}$.
	Moreover, since $\xi^i$ for $i=1,\ldots,\cp$ are linearly independent in $\C(S)$,
	this implies that 
	$\wt\xi^i$ for 
	$i\neq k$ are also linearly independent in $\C(S)$.
	Since $\Cwt(S)\subsetneq\C(S)$ and 
	$\wt\xi^i$ for $i\neq k$ are linearly independent,
	we see that $\wt\xi^i$ for $i=1,\ldots,\cp$, 
	$i\neq k$
	form a basis of $\Cwt(S)$.  This shows \cref{eq: dim} as desired.	
\end{proof}

We next introduce the notion of separability and weak equivalence of interactions.
Weak equivalence will be used to compare interactions
on local states with different cardinalities.

\begin{definition}
	We say that an interaction $(S,\phi)$ is \emph{separable}, if for any $s,s'\in S$ such that $s\neq s'$, 
	there exists a conserved
	quantity $\xi\in\C(S)$ such that $\xi(s)\neq\xi(s')$.
	In other words, for any $s,s'\in S$, if $\xi(s)=\xi(s')$ for any $\xi\in\C(S)$, then we have $s=s'$.
\end{definition}

Note that the multi-species exclusion interactions and the $\kappa$-exclusion interactions are both separable interactions.  
The Glauber interaction is an example of an interaction which is not separable.

\begin{definition}\label{def: we}
	We say that interactions $(S,\phi)$ and $(S',\phi')$ are \emph{weakly equivalent}
	for the map $\iota\colon S\rightarrow S'$,
	if the map $\Map(S',\R)\rightarrow\Map(S,\R)$ given by composition with $\iota$
	induces an $\R$-linear isomorphism $\iota^*\colon\C(S')\cong\C(S)$, and there 
	exists a map $\iota'\colon S'\rightarrow S$ inducing 
	an $\R$-linear isomorphism ${\iota'}^*\colon\C(S)\cong\C(S')$ that is
	the inverse of $\iota^*$.  In this case, we denote $(S,\phi)\sim(S',\phi')$, or simply
	$\phi\sim\phi'$.
\end{definition}

Note that weak equivalence is an equivalence relation among interactions. Moreover if $(S,\phi)$ and $(S',\phi')$ are equivalent, then they are weakly equivalent.

For the multi-species exclusion interaction $\phi^\kappa_\ms$ for $S=\{0,\ldots,\kappa\}$
of \cref{def: MS}, the space of conserved quantities satisfies
$\dim_\R\Consv^{\phi^\kappa_\ms}(S)=\kappa$ and is spanned by the standard basis
$\xi^1,\ldots,\xi^\kappa$ such that
$\xi^i(0)=0$ and $\xi^i(j)=\delta_{ij}$ for $i,j=1,\ldots,\kappa$.
Distinct diagonal vertices in 
$S\times S$ are in distinct connected components of the graph $(S\times S,\phi^\kappa_\ms)$.

\begin{lemma}\label{lem: diagonal}
	Let $\phi^\kappa_\ms$ be the multi-species exclusion interaction for $S=\{0,\ldots,\kappa\}$.
	Let $j,k\in S$, $j,k\neq0$ such that $j\neq k$.
	Let $\phi$ be an interaction
	containing $\phi^\kappa_\ms$
	such that $(j,j)\lra_{\phi}(k,k)$ and $\C(S)=\kappa-1$.
	Note such interaction is unique up to equivalence by \cref{prop: less}.
	Then 
	$\xi^i$ for $i\neq j,k$ and
	\[
		\xi^{j}+\xi^k
	\]
	give an $\R$-linear basis of $\C(S)$.
	Moreover, the interaction $\phi$ is not separable.
\end{lemma}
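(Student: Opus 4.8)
The plan is to realize $\phi$, up to equivalence, as the interaction $\wt\phi$ furnished by \cref{prop: less} applied to the multi-species exclusion interaction $(S,\phi^\kappa_\ms)$ at the two diagonal vertices $\s\coloneqq(j,j)$ and $\s'\coloneqq(k,k)$, and then to read the asserted basis straight off formula \eqref{eq: xi_tilde}. First I would record the elementary evaluations $\xi^i_S(\s)=\xi^i(j)+\xi^i(j)=2\delta_{ij}$ and $\xi^i_S(\s')=2\delta_{ik}$ for the standard basis $\xi^1,\dots,\xi^\kappa$ of \cref{def: MS}. Since $j,k\neq 0$ and $j\neq k$, these give $\xi^j_S(\s)=2\neq 0=\xi^j_S(\s')$, which both verifies the hypothesis of \cref{prop: less} (some conserved quantity separates $\s$ and $\s'$) and singles out $\xi^j$ as an admissible pivot, i.e.\ the index playing the role of ``$k$'' in the statement of \cref{prop: less}.

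Next I would identify the conserved quantities of $\phi$ with those of $\wt\phi\coloneqq\phi^\kappa_\ms\cup\{\vp,\bar\vp\}$, where $\vp=(\s,\s')$. Because $\phi$ contains $\phi^\kappa_\ms$ and satisfies $\s\lra_\phi\s'$, it contains $\wt\phi$, so $\C(S)\subseteq\Cwt(S)$; by \cref{prop: less} the latter has dimension $\kappa-1$, which by hypothesis equals $\dim_\R\C(S)$, forcing the inclusion to be an equality. (Equivalently, one may quote the uniqueness up to equivalence already noted in the statement.) It then remains to evaluate \eqref{eq: xi_tilde} with pivot $\xi^j$. The denominator is $\xi^j_S(\s')-\xi^j_S(\s)=-2$, and for $i\neq j$ the numerator is $\xi^i_S(\s')-\xi^i_S(\s)=2\delta_{ik}$; hence $\wt\xi^i=\xi^i$ for $i\neq j,k$, while $\wt\xi^k=\xi^k-\tfrac{2}{-2}\xi^j=\xi^j+\xi^k$. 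This is exactly the list $\{\xi^i\mid i\neq j,k\}\cup\{\xi^j+\xi^k\}$, proving the basis claim.

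Finally, non-separability is immediate once the basis is in hand: I would evaluate each basis vector at the two distinct states $j$ and $k$. For $i\neq j,k$ one has $\xi^i(j)=\delta_{ij}=0=\delta_{ik}=\xi^i(k)$, and $(\xi^j+\xi^k)(j)=1=(\xi^j+\xi^k)(k)$. Thus every basis vector, and hence every $\xi\in\C(S)$, takes the same value at $j$ and at $k$, even though $j\neq k$; by definition $\phi$ is not separable.

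I anticipate no serious obstacle: the argument is essentially a bookkeeping exercise driving \cref{prop: less}. The only point requiring care is the notational clash between the distinguished states $j,k$ of the present lemma and the pivot index called ``$k$'' in \cref{prop: less}; choosing the pivot to be $\xi^j$ (rather than $\xi^k$) keeps the two basis vectors $\xi^j,\xi^k$ on an equal footing and makes the symmetric combination $\xi^j+\xi^k$ appear automatically. One should also be sure to invoke the hypothesis $\dim_\R\C(S)=\kappa-1$ precisely at the step identifying $\C(S)$ with $\Cwt(S)$, since without it $\phi$ could a priori carry extra edges cutting the dimension further.
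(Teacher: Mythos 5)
Your proposal is correct and follows essentially the same route as the paper's own proof: evaluate the standard basis on the diagonal vertices $(j,j)$ and $(k,k)$, invoke \cref{prop: less} and formula \eqref{eq: xi_tilde} to obtain the basis $\{\xi^i\mid i\neq j,k\}\cup\{\xi^j+\xi^k\}$, and conclude non-separability from the equal values at $j$ and $k$. The only differences are immaterial: you pivot on $\xi^j$ where the paper pivots on $\xi^k$ (yielding the same combination by symmetry), and you spell out the identification $\C(S)=\Cwt(S)$ via the dimension hypothesis, a step the paper leaves implicit in the uniqueness remark.
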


\begin{proof}
	We have $\xi^i_S(j,j)=\xi^i_S(k,k)=0$ for $i\neq j,k$,
	and we have $\xi^{j}_S(j,j)=\xi^k_S(k,k)=2$
	and $\xi^{j}_S(k,k)=\xi^{k}_S(j,j)=0$.
	This shows in particular that $\xi^k_S(j,j)\neq\xi^k_S(k,k)$.
	Our result now follows from \cref{prop: less},
	noting that \cref{eq: xi_tilde} gives $\wt\xi^i=\xi^i$ for $i\neq j,k$ and 
	$\wt\xi^j=\xi^j+\xi^k$ in our case.
	Since $\wt\xi(j)=\wt\xi(k)$ for any $\wt\xi\in\C(S)$, we see that $\phi$ is not separable.
\end{proof}

We next show that the interaction $(S_\kappa,\wt\phi)$ constructed in \cref{lem: diagonal}
for $S_\kappa=\{0,1,\ldots,\kappa\}$
is weakly equivalent to the multi-species exclusion interaction on $S_{\kappa-1}=\{0,\ldots,\kappa-1\}$.

\begin{proposition}\label{prop: diagonal}
	Let $(S_\kappa,\phi^\kappa_\ms)$ be the multi-species exclusion interaction on 
	$S_\kappa=\{0,\ldots,\kappa\}$.
	Let $j,k\in S_\kappa$ such that $j\neq k$,
	and let $\wt\phi$ be the interaction constructed in \cref{lem: diagonal}
	containing $\phi^\kappa_{\ms}$
	such that $(j,j)\lra_{\tilde\phi}(k,k)$ and $\Cwt(S)=\kappa-1$.
	Then $(S_\kappa,\wt\phi)$ is weakly equivalent
	to the multi-species exclusion interaction $(S_{\kappa-1},\phi^{\kappa-1}_\ms)$ 
	via the injection $\iota\colon S_{\kappa-1}\rightarrow S_\kappa$ given by 
	$\iota(i)=i$ for $i<k$ and $\iota(i)=i+1$ for $i\geq k$.
\end{proposition}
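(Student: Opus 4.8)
The plan is to read the weak equivalence here as the statement that the two local states $j$ and $k$, which $\wt\phi$ renders indistinguishable by conserved quantities, may be merged into a single state, with the injection $\iota$ selecting $S_\kappa\setminus\{k\}$ as a set of representatives for the merged system. Two observations drive everything. First, since the only edges of $\phi^{\kappa-1}_\ms$ are transpositions $(a,b)\lra(b,a)$, \emph{every} function on $S_{\kappa-1}$ is a conserved quantity for $\phi^{\kappa-1}_\ms$, so $\Consv^{\phi^{\kappa-1}_\ms}(S_{\kappa-1})$ is the entire space $\Map(S_{\kappa-1},\R)$ modulo constants. Second, the extra edge $(j,j)\lra_{\wt\phi}(k,k)$ forces $2\xi(j)=2\xi(k)$, hence $\xi(j)=\xi(k)$ for every $\xi\in\Cwt(S_\kappa)$. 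I would use these to show that $\iota^*$ is an isomorphism and then exhibit an explicit inverse ${\iota'}^*$.

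For the map $\iota^*\colon\Cwt(S_\kappa)\to\Consv^{\phi^{\kappa-1}_\ms}(S_{\kappa-1})$, well-definedness is automatic by the first observation, since the target is all of $\Map(S_{\kappa-1},\R)$ modulo constants (and $\iota^*$ sends constants to constants, so it descends to the quotients). For injectivity, suppose $\xi\in\Cwt(S_\kappa)$ satisfies $\iota^*\xi=\xi\circ\iota\equiv c$ for a constant $c$. As $\iota(S_{\kappa-1})=S_\kappa\setminus\{k\}$, this means $\xi\equiv c$ off $k$; in particular $\xi(j)=c$ since $j\neq k$, and then $\xi(k)=\xi(j)=c$ by the second observation, so $\xi$ is trivial in $\Cwt(S_\kappa)$. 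Since $\dim_\R\Cwt(S_\kappa)=\kappa-1=\dim_\R\Consv^{\phi^{\kappa-1}_\ms}(S_{\kappa-1})$, the injection $\iota^*$ is then an isomorphism.

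To produce the inverse, I would define $\iota'\colon S_\kappa\to S_{\kappa-1}$ by $\iota'(s)=s$ for $s<k$, $\iota'(s)=s-1$ for $s>k$, and $\iota'(k)=\iota'(j)$ (so $\iota'(k)=j$ when $j<k$ and $\iota'(k)=j-1$ when $j>k$). By construction $\iota'\circ\iota=\id_{S_{\kappa-1}}$, and $\iota'$ collapses exactly the pair $\{j,k\}$, i.e.\ $\iota'(j)=\iota'(k)$. I would then check that ${\iota'}^*$ carries $\Consv^{\phi^{\kappa-1}_\ms}(S_{\kappa-1})$ into $\Cwt(S_\kappa)$: for any $\xi$ the pullback $\xi\circ\iota'$ is conserved along every transposition edge of $\phi^\kappa_\ms\subset\wt\phi$ trivially, while along the new edge $(j,j)\lra(k,k)$ conservation reads $\xi(\iota'(j))=\xi(\iota'(k))$, which holds because $\iota'(j)=\iota'(k)$. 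Finally, $\iota'\circ\iota=\id$ gives $\iota^*\circ{\iota'}^*=\id$, so ${\iota'}^*$ is a right inverse of the isomorphism $\iota^*$ and hence its two-sided inverse; this is exactly the data required by \cref{def: we}.

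The only delicate point I anticipate is the index bookkeeping around $k$ and the case distinction $j<k$ versus $j>k$ in the definition of $\iota'$, but this is routine once one observes that $\iota'$ should send $k$ to wherever $j$ lands. Alternatively, one can bypass the dimension count by computing $\iota^*$ directly on the basis $\{\xi^i:i\neq j,k\}\cup\{\xi^j+\xi^k\}$ of $\Cwt(S_\kappa)$ furnished by \cref{lem: diagonal} and verifying that its image is the standard basis of $\Consv^{\phi^{\kappa-1}_\ms}(S_{\kappa-1})$.
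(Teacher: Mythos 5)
Your proof is correct, and it supplies exactly the data demanded by \cref{def: we}: that $\iota^*$ is an isomorphism and that the collapsing map $\iota'$ induces its inverse. The overall skeleton coincides with the paper's, but your verification that $\iota^*$ is an isomorphism takes a genuinely different route. The paper relabels so that $j=\kappa-1$, $k=\kappa$, invokes the explicit basis $\{\xi^i : i\neq j,k\}\cup\{\xi^{j}+\xi^{k}\}$ of $\Cwt(S_\kappa)$ from \cref{lem: diagonal}, and checks by hand that restriction along $\iota$ carries it to the standard basis of $\Consv^{\phi^{\kappa-1}_\ms}(S_{\kappa-1})$; your main argument is basis-free, resting on the observations that every function on $S_{\kappa-1}$ is conserved for $\phi^{\kappa-1}_\ms$ (so that $\iota^*$ lands in the right space automatically), that the added edge forces $\xi(j)=\xi(k)$ (giving injectivity), and on the dimension count $\dim_\R\Cwt(S_\kappa)=\kappa-1=\dim_\R\Consv^{\phi^{\kappa-1}_\ms}(S_{\kappa-1})$. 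What the paper's explicit computation buys is concrete formulas for how conserved quantities correspond under the weak equivalence, which are reused immediately in the proof of \cref{cor: diagonal}; what your argument buys is freedom from the reordering and index bookkeeping you flagged as the delicate point, with the functorial identity $\iota^*\circ{\iota'}^*=(\iota'\circ\iota)^*=\id$ packaging the inverse cleanly. Your closing alternative, computing $\iota^*$ on the basis furnished by \cref{lem: diagonal}, is precisely the paper's proof.
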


\begin{proof}
	Let $(S_\kappa,\wt\phi)$ be the interaction constructed in \cref{prop: less}.	
	By reordering $S_\kappa$ if necessary, we can assume that $j=\kappa-1$ and $k=\kappa$.
	By \cref{lem: diagonal}, $\Cwt(S)$ is spanned by $\wt\xi^i=\xi^i$ for $i < \kappa-1$ and 
	$\wt\xi^{\kappa-1}=\xi^{\kappa-1}+\xi^\kappa$.
	The natural inclusion $S_{\kappa-1}\hookrightarrow S_\kappa$ induces 
	the restriction map $\Map(S_\kappa,\R)\rightarrow\Map(S_{\kappa-1},\R)$,
	which maps the basis $\wt\xi^1,\ldots,\wt\xi^{\kappa-1}$
	of $\Cwt(S)$ to the standard basis $\xi^1,\ldots,\xi^{\kappa-1}$ of $\Consv^{\phi^{\kappa-1}_\ms}(S_{\kappa-1})$. 
	If we let $S_\kappa\rightarrow S_{\kappa-1}$ be the surjective map given by mapping $i<\kappa$ to $i$
	 and $\kappa$ to $\kappa-1$, then this induces an isomorphism between 
	 $\Consv^{\phi^{\kappa-1}_\ms}(S_{\kappa-1})$
	 and $\Cwt(S_\kappa)$.
	Hence by \cref{def: we}, we see that the inclusion 
	$S_{\kappa-1}\hookrightarrow S_\kappa$ gives a weak equivalence 
	 between $(S_{\kappa-1},\phi^{\kappa-1}_\ms)$ and $(S_\kappa,\wt\phi)$ as desired.
\end{proof}

\begin{corollary}\label{cor: diagonal}
	Let $(S_\kappa,\phi^\kappa_\ms)$ be the multi-species exclusion interaction on $S_\kappa=\{0,\ldots,\kappa\}$.
	Let $j,k,l\in S_\kappa$ such that $j\neq k$.	
	Then the interaction $(S_\kappa,\phi')$ constructed in \cref{prop: less}
	such that $(j,l)\lra_{\phi'}(k,l)$
	is equivalent to the interaction $(S_\kappa,\wt\phi)$ constructed in \cref{prop: diagonal}
	such that $(j,j)\lra_{\wt\phi}(k,k)$.
	In particular,  the interaction $(S_\kappa,\phi')$ is again weakly equivalent
	to the multi-species exclusion interaction 
	$(S_{\kappa-1},\phi^{\kappa-1}_\ms)$ for $S_{\kappa-1}=\{0,\ldots,\kappa-1\}$.
\end{corollary}

\begin{figure}[htbp]
	\begin{center}
		\includegraphics[width=1\linewidth]{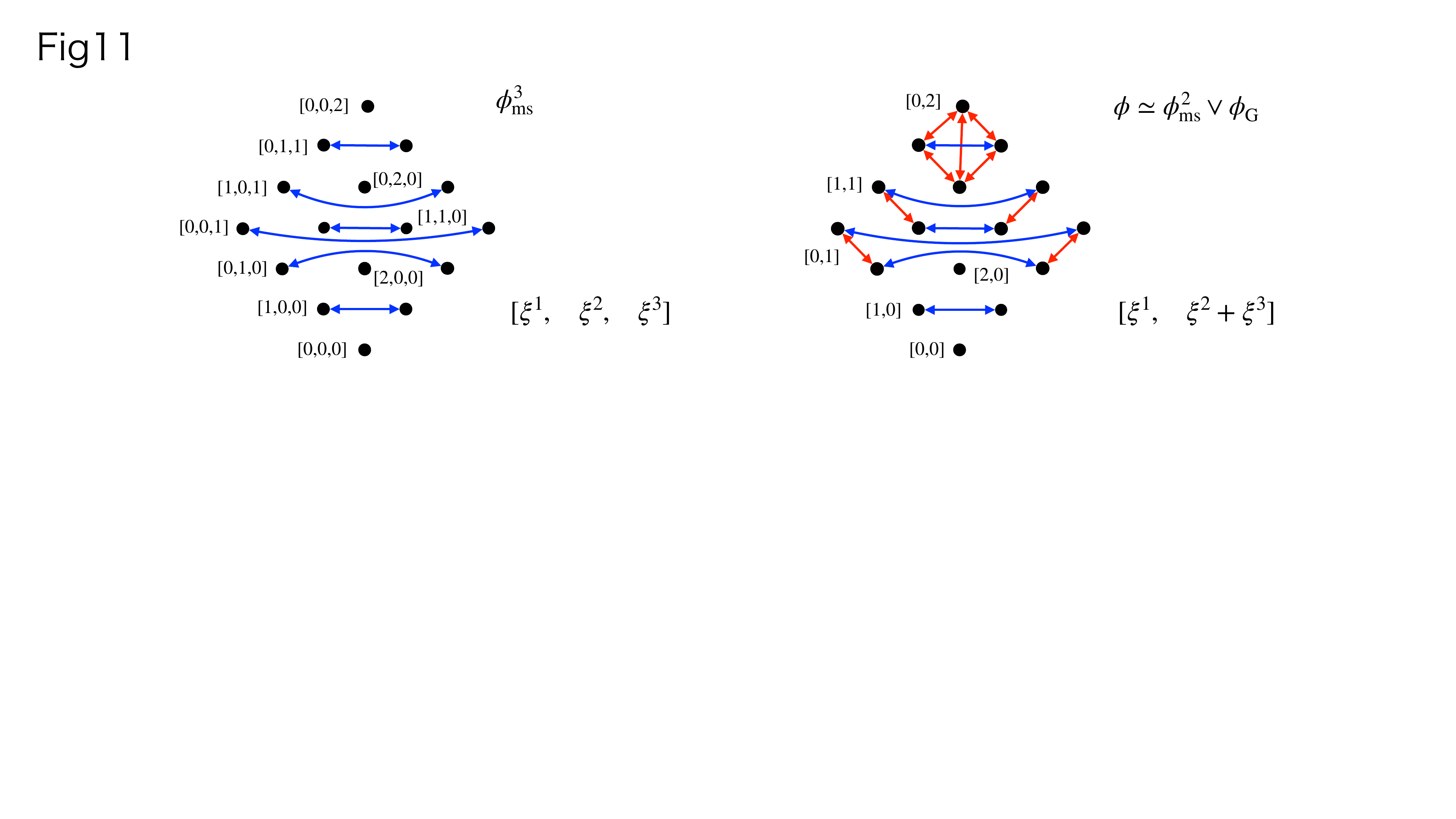}
		\caption{For the multi-species 
		exclusion interaction $\phi^3_\ms$, \cref{prop: diagonal} and \cref{cor: diagonal} 
		shows that any interaction connecting one or more of the red edges 
		on the graph on the right are all equivalent.
		The numbers next to each connected component of the graphs are the values of the conserved quantities.}
		\label{Fig: 11}
	\end{center}
\end{figure}

\begin{proof}
	By reordering the elements of $S=S_\kappa$ and taking equivalent interactions if necessary,
	we can assume that $j=\kappa-1$ and $k=\kappa$.  
	By \cref{lem: diagonal},
	$\Cwt(S)$ is spanned by the conserved quantities 
	$\xi^i$ for $i=1,\ldots,\kappa-2$ and
	$
		\wt\xi^{\kappa-1}\coloneqq\xi^{\kappa-1}+\xi^\kappa,
	$
	where $\xi^1,\ldots,\xi^\kappa$ is the standard basis of $\Consv^{\phi^\kappa_\ms}(S)$.
	We have
	\[
		\xi^i_S(\kappa-1,l)=\xi^i_S(\kappa,l),\quad i=1,\ldots,\kappa-2
	\]
	and
	\[
		\wt\xi^{\kappa-1}_S(\kappa-1,l)=\wt\xi^{\kappa-1}_S(\kappa,l).
	\]
	This implies in particular that $\Cwt(S)\subset\Cd(S)$.
	Since the dimensions of $\Cwt(S)$ and $\Cd(S)$ are both $\kappa-1$,
	we see that  $\Cwt(S)=\Cd(S)$,
	hence
	the interactions $(S,\wt\phi)$ and $(S,\phi')$ are equivalent as desired.
	The last statement of our assertion is a consequence of \cref{prop: diagonal}.
\end{proof}

\begin{remark}\label{rem: collapse}
	\cref{prop: diagonal} and \cref{cor: diagonal} shows that for $(S,\phi^\kappa_\ms)$
	and distinct $a,b\in S$,
	the interaction obtained by connecting diagonal vertices $(a,a)\lra(b,b)$
	and connecting the vertices $(a,c)\lra(b,c)$
	for some $c\in S$ are all equivalent and \emph{not} separable.
\end{remark}

In what follows,
let $\kappa\geq 0$ be an integer and let $S=\{0,\ldots,\kappa\}$.
By \cref{prop: smallest}, we see that any interaction $(S,\phi)$ such that $\cp=\kappa$
is equivalent to the multi-species interaction $(S,\phi^\kappa_\ms)$.
Using \cref{prop: diagonal} and \cref{cor: diagonal}, we can classify equivalence classes
of interactions on $S$ satisfying $\cp=\kappa-1$ for $\kappa\geq 1$ as follows.

\begin{theorem}\label{thm: 1}
	Let $\kappa\geq 1$ be an integer and let $S=\{0,\ldots,\kappa\}$.
	For any interaction $(S,\phi)$, let $c_\phi=\dim_\R\C(S)$.
	Then the equivalence class of interactions on $S$ such that $c_\phi=\kappa-1$ can be 
	classified as follows:
	\begin{enumerate}
		\item If $\kappa=1$, then there exists a \emph{unique} equivalence class of interaction $(S,\phi)$ such that $c_\phi=\kappa-1=0$, which contains the Glauber interaction.
		\item If $\kappa=2$, then there exists exactly \emph{two} equivalence classes of interactions $(S,\phi)$ such that $c_\phi=\kappa-1=1$. 
		One is the class of interactions that is weakly equivalent to $(\{0,1\},\phi^1_\ms)$
		and the other is the equivalence class of the $\kappa$-exclusion interaction 
		$(\{0,1,2\},\phitwEX)$ (see \cref{def: GE}).
		\item If $\kappa\geq 3$, then there exists exactly \emph{three} equivalence classes of interactions $(S,\phi)$ such that $c_\phi=\kappa-1$. 
		For the particular case that $\kappa=3$ so that $S=\{0,1,2,3\}$, the first
		 is the class of interactions that is weakly equivalent to $(\{0,1,2\},\phi^2_\ms)$,
		the second is the equivalence class of the
		 lattice gas with energy $\phitLGE$ on $S$
		 (see \cref{def: LGE}),		
		and the third is the equivalence class of the two-lane particle
		 interaction $\phi^{\square 2}_{\EX}$ on $S$ with the identification $S=\{0,1\}\times\{0,1\}$
		 given by $0\mapsto(0,0)$, $1\mapsto (1,0)$, $2\mapsto (0,1)$ and $3\mapsto(1,1)$
		  (see \cref{def: RP}).
	\end{enumerate}
\end{theorem}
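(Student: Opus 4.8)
The plan is to attach to each interaction with $c_\phi=\kappa-1$ a single numerical invariant read off from its conserved quantities, and to show that this invariant takes at most three values, each realized by one of the named examples. First I would reduce to completions: by \cref{lem: completion} every $(S,\phi)$ is equivalent to its completion $(S,\hat\phi)$, so I may assume $\phi=\hat\phi$, and by \cref{prop: smallest} then $\phi^\kappa_\ms\subseteq\phi$. Since passing to a larger edge set only shrinks the space of conserved quantities, $\C(S)$ is a subspace of $\Consv^{\phi^\kappa_\ms}(S)$, which is all of $\Map(S,\R)$ modulo constants and has dimension $\kappa$. Writing $\mathbbm 1_s\colon S\to\R$ for the indicator of $s\in S$ and assigning to $\s=(s_1,s_2)$ its \emph{type} $\mathbbm 1_{s_1}+\mathbbm 1_{s_2}$, one has $\xi_S(\s)-\xi_S(\s')=\langle\xi,\,\mathrm{type}(\s)-\mathrm{type}(\s')\rangle$. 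As $c_\phi=\kappa-1$, the hyperplane $\C(S)$ is the annihilator of a single nonzero $v=(v_s)_{s\in S}$ with $\sum_s v_s=0$, and the completion joins $\s\lra\s'$ exactly when $\mathrm{type}(\s)-\mathrm{type}(\s')$ is a multiple of $v$.

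Next I would pin down the possible $v$. Because $\dim_\R\C(S)=\kappa-1<\kappa$, the completion must join at least one pair $\s\lra\s'$ lying in distinct components of $\phi^\kappa_\ms$; for such a pair $\mathrm{type}(\s)-\mathrm{type}(\s')$ is a nonzero multiple of $v$, so $v\propto\mathbbm 1_a+\mathbbm 1_b-\mathbbm 1_c-\mathbbm 1_d$ for some $a,b,c,d\in S$ with $\{a,b\}\neq\{c,d\}$. Two such completions are equivalent if and only if a bijection of $S$ carries one relation vector to a scalar multiple of the other, so the equivalence class is governed by the \emph{shape} of $v$, and in particular by its support size $\#\{s:v_s\neq0\}$, which is an equivalence invariant. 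Simplifying $\mathbbm 1_a+\mathbbm 1_b-\mathbbm 1_c-\mathbbm 1_d$ according to the coincidences among $a,b,c,d$ leaves exactly three shapes: support $2$, with $v\propto\mathbbm 1_a-\mathbbm 1_b$; support $3$, with $v\propto 2\,\mathbbm 1_a-\mathbbm 1_c-\mathbbm 1_d$; and support $4$, with $v=\mathbbm 1_a+\mathbbm 1_b-\mathbbm 1_c-\mathbbm 1_d$ ($a,b,c,d$ distinct). These are pairwise inequivalent as their support sizes differ. Moreover $\phi$ fails to be separable iff some two states are forced equal by $v$, i.e. iff $v\propto\mathbbm 1_s-\mathbbm 1_{s'}$; thus the support-$2$ class is precisely the non-separable one, while support-$3$ and support-$4$ are separable.

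Finally I would count and identify representatives. Support size $2$ needs two states ($\kappa\geq1$), size $3$ needs three ($\kappa\geq2$), size $4$ needs four ($\kappa\geq3$), giving one class for $\kappa=1$, two for $\kappa=2$, and three for $\kappa\geq3$, as claimed. The support-$2$ class is obtained by a merge $\mathbbm 1_a-\mathbbm 1_b$, realized by connecting $(a,a)\lra(b,b)$ (equivalently $(a,l)\lra(b,l)$ by \cref{cor: diagonal}); by \cref{prop: diagonal} it is non-separable and weakly equivalent to $\phi^{\kappa-1}_\ms$, which for $\kappa=1$ has $c_\phi=0$ and contains the Glauber interaction, for $\kappa=2$ is the class weakly equivalent to $\phi^1_\ms$, and for $\kappa=3$ the class weakly equivalent to $\phi^2_\ms$. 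For the separable classes I would check the single relation satisfied by the conserved quantities of each named model: $\phitwEX$ gives $\xi(0)+\xi(2)=2\xi(1)$ (support $3$), so it is the support-$3$ class for $\kappa=2$; for $\kappa=3$, the quantities $\xi_P,\xi_E$ of \cref{lem: LGE} satisfy $\xi(1)+\xi(3)=2\xi(2)$ (support $3$), placing $\phitLGE$ in the support-$3$ class, while the quantities $\xi^1,\xi^2$ of \cref{lem: RL} satisfy $\xi(0)+\xi(3)=\xi(1)+\xi(2)$ (support $4$) under the stated identification $S\cong\{0,1\}^2$, placing $\phi^{\square 2}_{\EX}$ in the support-$4$ class.

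The hard part is the pinning step: showing that a dimension drop of exactly one forces the relation vector $v$ into the combinatorial merge form $\mathbbm 1_a+\mathbbm 1_b-\mathbbm 1_c-\mathbbm 1_d$ rather than being an arbitrary vector with vanishing coordinate sum; everything after that (the three-way coincidence bookkeeping, the invariance of the support size, and the relation computations identifying the representatives) is routine. I would cite \cref{prop: diagonal} and \cref{cor: diagonal} for the non-separable case so as not to reprove weak equivalence to $\phi^{\kappa-1}_\ms$.
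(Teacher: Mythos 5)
Your proposal is correct, and its skeleton is the same as the paper's: reduce by \cref{lem: completion} and \cref{prop: smallest} to a completion containing $\phi^\kappa_\ms$, note that the dimension drop forces an edge joining two distinct components of $(S\times S,\phi^\kappa_\ms)$, and split into three cases according to the combinatorial type of that edge. Indeed, your support-$2$, support-$3$ and support-$4$ shapes of the relation vector $v$ are exactly the paper's cases (i)/(i)$'$ (two diagonals, resp.\ two vertices with a common component), (ii) (a diagonal joined to a disjoint non-diagonal) and (iii) (two disjoint non-diagonals), and your ``pinning step'' is precisely the paper's observation that the added edge must connect distinct components, whence the merge form $\mathbbm{1}_a+\mathbbm{1}_b-\mathbbm{1}_c-\mathbbm{1}_d$. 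Where you genuinely add something is the dual formulation itself: the criterion that two completions are equivalent if and only if a permutation of $S$ carries the line $\R v$ to $\R w$, and the resulting invariance of the support size. The paper's proof enumerates the cases and identifies representatives (via \cref{prop: less}, \cref{prop: diagonal}, \cref{cor: diagonal}) but never explicitly argues that the three classes are pairwise \emph{in}equivalent --- for instance that $\phitLGE$ and $\phi^{\square 2}_\EX$, both separable with $c_\phi=2$, are inequivalent when $\kappa=3$; your support-size invariant settles exactly this point, since separability alone only distinguishes the support-$2$ class from the other two. One small item to make your count airtight for general $\kappa\geq 3$: the claim that each admissible shape is actually realized by an interaction with $c_\phi=\kappa-1$ (not smaller) is \cref{prop: less} applied to $\phi^\kappa_\ms$ plus a single edge of the corresponding type, which you should cite there just as you cite \cref{prop: diagonal} for the non-separable class. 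In short, yours is not a different route so much as a rigorization of the same route: the paper's edge-level case split supplies exhaustiveness and the representatives, while your dual invariant supplies the distinctness that the word ``exactly'' requires.
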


\begin{proof}
	By \cref{prop: smallest}, any equivalence class of interactions
	contains an interaction $(S,\phi)$ such that $\phi^\kappa_\ms\subset\phi$
	for the multi-species exclusion interaction $(S,\phi^\kappa_\ms)$.
	Since $\dim_\R\Consv^{\phi^\kappa_\ms}(S)=\kappa$, 
	if $\dim_\R\C(S)=\kappa-1$, then $(S,\phi)$ is obtained from $(S,\phi^\kappa_\ms)$ by
	connecting two vertices in $(S\times S,\phi^\kappa_\ms)$.
	By reordering the elements of $S$, the connection
	of vertices can be classified into the following cases:
	 (i) the connection of two diagonals $(a,a)\lra(b,b)$ for distinct $a,b\in S$,
	(i)${}'$ the connection of vertices with common component 
	$(a,c)\lra(b,c)$ for distinct $a,b\in S$ and $c\in S$,
	(ii) the connection of a diagonal and a non-diagonal without common components
	$(a,a)\lra(b,c)$ for distinct $a,b,c\in S$, and (iii) 	
	the connection of two non-diagonal vertices without common components
	$(a,b)\lra(c,d)$ for distinct $a,b,c,d\in S$.
	By \cref{rem: collapse}, cases (i) and (i)${}'$ give equivalent interactions.
	Hence there are at most three cases (i)(ii)(iii).
	
	(1) If $\kappa=1$ so that $S=\{0,1\}$, 
	then since $S$ contains only two distinct elements, 
	only case (i) is possible. This gives the unique equivalence class up to isomorphism
	such that $\cp=\kappa-1=0$, which includes the Glauber interaction.

	(2) If $\kappa=2$ so that $S=\{0,1,2\}$,
	then since $S$ has three distinct elements,
	the equivalence class of interactions such that $\cp=\kappa-1=1$
	corresponds to cases (i) or (ii).
	(i) gives an interaction equivalent to $(\{0,1\},\phi^1_\ms)$
	and (ii) gives an interaction equivalent to the $2$-exclusion $\phitwEX$,
	obtained by $(1,1)\lra(0,2)$.
	
	(3) If $\kappa\geq 3$, then we have all three cases (i)(ii)(iii).
	Note that
	(i) gives an interaction weakly equivalent to $(\hat{S},\phi^{\kappa-1}_\ms)$
	for $\hat{S}=\{0,\ldots,\kappa-1\}$.
	In the special case that $\kappa=3$, then by connecting $(2,2)$ with $(1,3)$,
	we see that this case is the equivalence class containing the lattice gas with energy
	given in \cref{def: LGE} (see also \cref{Fig: 10}).
	Furthermore, by connecting $(0,3)$ with $(1,2)$,
	we see that this case is the
	equivalence class containg the two-lane particle interaction $\phi^{\square 2}_\EX=\phiEX\square\phiEX$. 
	This completes the proof of our assertion.
\end{proof}

We can now classify the equivalence classes of interactions on $S=\{0,1\}$, $S=\{0,1,2\}$ and 
$S=\{0,1,2,3\}$. 
 The following is an immediate consequence of \cref{thm: 1}.

\begin{corollary}\label{cor: 1}
	Let $S=\{0,1\}$.
	Then there are exactly 2 equivalence classes of interactions on $S$,
	which are equivalent to the following interactions:
	\begin{enumerate}
		\item The  exclusion interaction given in \cref{def: EI}.
		We have $\cp=1$, and $\C(S)$ is spanned by $\xi$ satisfying $\xi(s)=s$ for $s=0,1\in S$.
		\item The complete graph on $S\times S$,
		which satisfies $\cp=0$ since there is exactly one connected component.
		This is equivalent to the Glauber interaction \cref{example: GI}, and is
		weakly equivalent to the unique interaction with one element
		in the set of local states.
	\end{enumerate}
	Note that (1) is separable whereas (2) is not.
\end{corollary}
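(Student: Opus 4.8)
The plan is to derive this directly from \cref{thm: 1} together with \cref{prop: smallest}, after first observing that on a two-element set the invariant $\cp$ can only take the values $0$ or $1$. Since $S=\{0,1\}$ has exactly $\kappa+1=2$ elements, the quotient of $\Map(S,\R)$ by the constants is one-dimensional, so $\cp=\dim_\R\C(S)\leq 1$ for every interaction $(S,\phi)$. Hence there are precisely two cases to examine, $\cp=1=\kappa$ and $\cp=0=\kappa-1$, and these will give the two claimed classes.

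First I would handle the case $\cp=1$. By \cref{prop: smallest}, any interaction on $S$ with $\cp=\kappa=1$ is equivalent to the multi-species exclusion interaction $\phi^1_\ms$, which coincides with the exclusion interaction $\phiEX$ of \cref{def: EI}. This yields class (1); the description of $\C(S)$ as spanned by $\xi(s)=s$ is exactly the content of \cref{def: EI}, and separability is immediate since this single nonconstant $\xi$ already distinguishes $0$ from $1$.

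Next I would treat $\cp=0$. By the case $\kappa=1$ of \cref{thm: 1}, there is a \emph{unique} equivalence class of interactions on $\{0,1\}$ with $\cp=\kappa-1=0$, and it contains the Glauber interaction. To identify its canonical representative with the complete graph on $S\times S$, I would invoke \cref{lem: completion}: the completion $\hat\phi$ connects $\s$ and $\s'$ precisely when $\xi_S(\s)=\xi_S(\s')$ for all $\xi\in\C(S)$; since $\C(S)=\{0\}$ this condition is vacuous, so $\hat\phi$ is the complete graph, with a single connected component. Non-separability is then automatic, as there is no nonconstant conserved quantity to distinguish $0$ from $1$. Finally, the weak equivalence to the one-element interaction follows from \cref{def: we}, since both spaces of conserved quantities are trivial, so the inclusion of a one-point set and its retraction each induce the (trivial) isomorphism between the two copies of $\{0\}$.

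The main obstacle here is not a difficulty of proof but of bookkeeping: one must confirm that the two cases give genuinely distinct equivalence classes—which they do, because $\cp$ is an invariant of equivalence and takes the different values $1$ and $0$—and that they exhaust all interactions, which is precisely the dimension bound $\cp\leq 1$ established at the outset. Everything else reduces to unwinding definitions already in place, so the corollary follows as an immediate consequence of \cref{thm: 1}.
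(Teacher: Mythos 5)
Your proposal is correct and follows essentially the same route as the paper's proof: \cref{prop: smallest} handles the case $\cp=\kappa=1$, \cref{thm: 1} (with $\kappa=1$) handles $\cp=0$, and the complete-graph representative is identified via the completion. You merely make explicit some steps the paper leaves implicit, such as the dimension bound $\cp\leq\abs{S}-1=1$ and the use of \cref{lem: completion} to realize the $\cp=0$ class as the complete graph.
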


\begin{proof}
	The classification follows from \cref{prop: smallest} and
	\cref{thm: 1}, noting that the multi-species interaction for $\kappa=1$
	coincides with the exclusion interaction (see \cref{def: MS}), and
	any interaction $\phi$
	such that $c_\phi=0$, the graph $(S\times S,\phi)$ is equivalent to the complete graph.
\end{proof}

For $S=\{0,1,2\}$, we have the following.

\begin{corollary}\label{cor: 2}
	Let $S=\{0,1,2\}$.
	Then there are exactly 4 equivalence classes of interactions on $S$,
	which are equivalent to the following interactions:
	\begin{enumerate}
		\item The multi-species exclusion interaction defined in \cref{def: MS}.
		We have $\cp=2$, and $\C(S)$ is spanned by the standard basis $\xi^1,\xi^2$.
		\item The interaction weakly equivalent to the multi-species exclusion interactions for $S=\{0,1\}$.  We have $\cp=1$, and $\C(S)$ is spanned by $\xi^1+\xi^2$.
		\item The $\kappa$-exclusion interaction (see \cref{def: GE}).  We have $\cp=1$,
		and $\C(S)$ is spanned by $\xi^1+2\xi^2$. 
		\item The complete graph on $S\times S$,
		which satisfies $\cp=0$ since there is exactly one connected component.
		This is equivalent to the generalized Glauber interaction (see \cref{Fig: 09}), and 
		is weakly equivalent to the unique interaction with one element
		in the set of local states.
	\end{enumerate}
	Note that (1) and (3) are separable, whereas (2) and (4) are not.
\end{corollary}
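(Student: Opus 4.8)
The plan is to organize the classification entirely by the numerical invariant $\cp=\dim_\R\C(S)$, which for $S=\{0,1,2\}$ (the case $\kappa=2$) can only take the values $0$, $1$, or $2$. Since both $\cp$ and the separability property are invariants of an equivalence class, it suffices to determine, for each admissible value of $\cp$, how many equivalence classes occur, exhibit a representative, and then read off a basis of $\C(S)$ together with the separability assertion. The substantive work is already carried out in \cref{prop: smallest} and \cref{thm: 1}; the only value of $\cp$ not directly handled by those results is $\cp=0$.

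First I would treat $\cp=2$. By \cref{prop: smallest}, any interaction with $\cp=\kappa=2$ is equivalent to the multi-species exclusion interaction $\phi^2_\ms$, which is class (1). Its space $\C(S)$ is spanned by the standard basis $\xi^1,\xi^2$, and since the pair $(\xi^1,\xi^2)$ sends $0,1,2$ respectively to the distinct values $(0,0),(1,0),(0,1)$, this class is separable. Next I would treat $\cp=1$ by invoking \cref{thm: 1}(2), which gives exactly two equivalence classes: the one weakly equivalent to $(\{0,1\},\phi^1_\ms)$, arising (via \cref{rem: collapse}) from connecting a diagonal pair, and the $2$-exclusion interaction $\phitwEX$ obtained from $(1,1)\lra(0,2)$. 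For the former, \cref{lem: diagonal} shows $\C(S)$ is spanned by $\xi^1+\xi^2$, which takes the value $1$ at both $1$ and $2$, so this class is not separable; this is class (2). For the latter, \cref{def: GE} gives that $\C(S)$ is spanned by $\xi$ with $\xi(j)=j$, i.e.\ $\xi^1+2\xi^2$, which separates all of $S$, yielding the separable class (3).

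Finally I would treat $\cp=0$. If $\C(S)=\{0\}$, then by \cref{lem: completion} the connected components of the completion $(S\times S,\hat\phi)$ are classified by the conserved quantities, which are now only the constants; hence the graph has a single connected component and $\hat\phi$ is the complete graph on $S\times S$. Thus every such $\phi$ is equivalent to the complete graph, which is manifestly not separable and is weakly equivalent to the one-element interaction (both having trivial spaces of conserved quantities). This is class (4). The four classes are genuinely distinct, being separated either by the value of $\cp$ or, within $\cp=1$, by separability. I do not expect a real obstacle here: granting \cref{thm: 1}, the corollary reduces to selecting representatives and evaluating explicit bases, and the only mild point requiring care is that $\cp=0$ forces the complete graph, which follows cleanly from \cref{lem: completion}.
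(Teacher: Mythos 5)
Your proposal is correct and follows essentially the same route as the paper: classification by $\cp\in\{0,1,2\}$ using \cref{prop: smallest} for $\cp=2$, \cref{thm: 1}(2) for $\cp=1$, and the observation that $\cp=0$ forces equivalence to the complete graph, with the bases of conserved quantities read off from the explicit edge-connections. The only cosmetic differences are that you justify the $\cp=0$ case via \cref{lem: completion} (which the paper merely asserts) and cite \cref{lem: diagonal} rather than \cref{prop: less} for the basis $\xi^1+\xi^2$ of class (2), both of which amount to the same computation.
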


\begin{proof}
	The classification follows from \cref{prop: smallest} and
	\cref{thm: 1}, noting that any interaction $\phi$
	such that $c_\phi=0$, the graph $(S\times S,\phi)$ is equivalent to the complete graph.
	The calculation of conserved quantities follows from \cref{prop: less}, noting that
	the interaction
	(2) is obtained by connecting $(1,1)$ to $(2,2)$ and
	interaction (3) is obtained by connecting 
	$(1,1)$ with $(0,2)$ and $(2,0)$.
\end{proof}

The equivalence classes of interactions for the case $S=\{0,1,2\}$ given in \cref{cor: 2} is represented by
the interactions in \cref{Fig: 09}.

\begin{figure}[htbp]
	\begin{center}
		\includegraphics[width=0.8\linewidth]{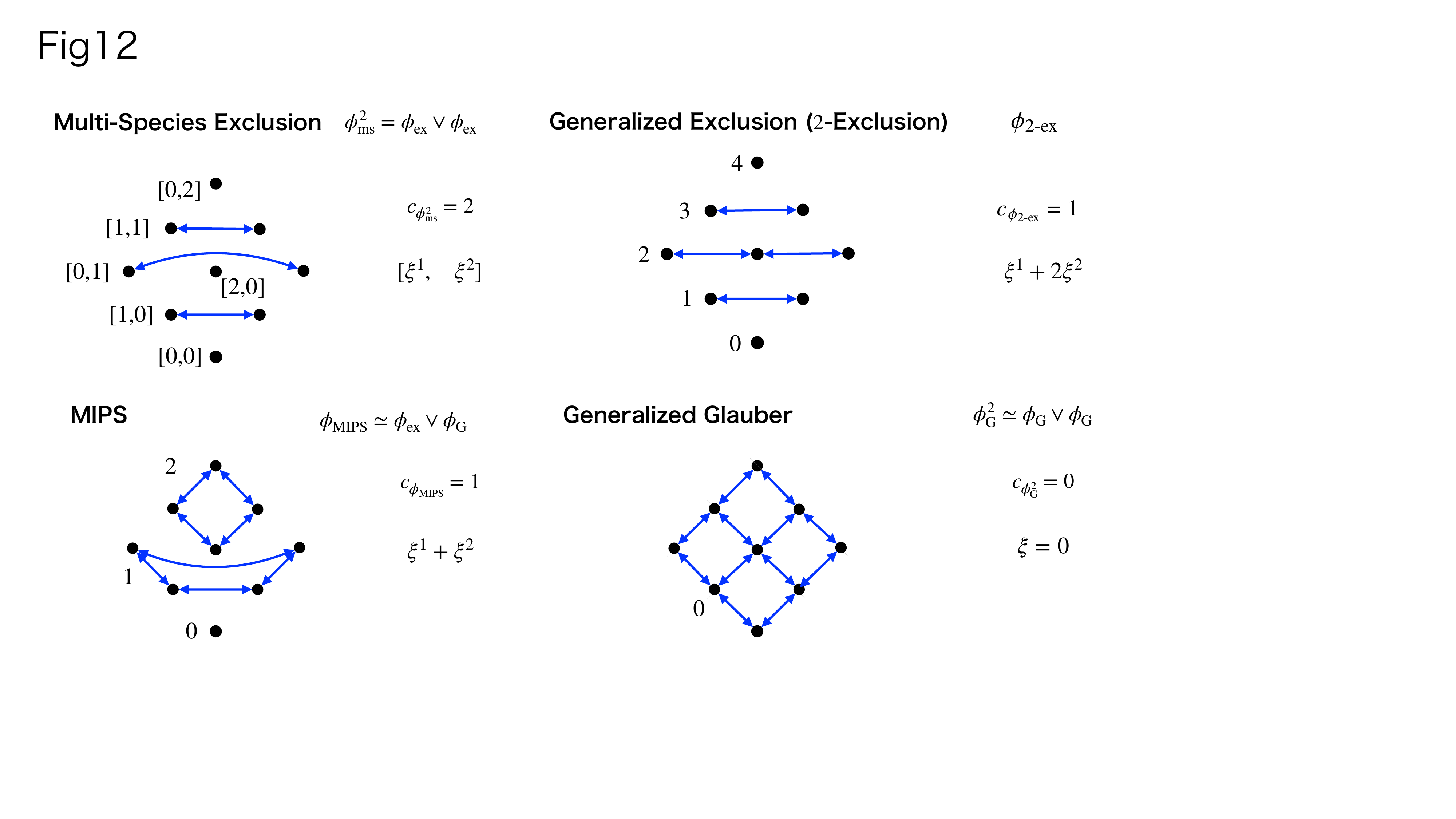}
		\caption{The Classifications of Interactions for $S=\{0,1,2\}$.
		The numbers next to each connected components of the 
		graphs are the values of the conserved quantities.
		The Generalized Glauber Interaction $\phi^2_{\operatorname{G}}$ 
		given via the graph in the above figure can be viewed as the generalization 
		of the Glauber Interaction with spin $\{-1,0,1\}$.}\label{Fig: 09}
	\end{center}
\end{figure}

We next consider the case $S=\{0,1,2,3\}$.

\begin{corollary}
	Let  $S=\{0,1,2,3\}$.  Any interaction on $S$ which is not separable is weakly equivalent to an interaction
	on $\{0,1,2\}$.  Hence any interaction on $S$ which is not
	separable corresponds via weak equivalent to one of the interactions of \cref{cor: 2}.
\end{corollary}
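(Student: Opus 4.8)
The plan is to establish a general collapsing principle and then specialize: every interaction on a finite set of local states is weakly equivalent to a \emph{separable} interaction, obtained by identifying the local states that no conserved quantity can distinguish. Granting this, the corollary is immediate, since identifying any two indistinguishable states in $S=\{0,1,2,3\}$ leaves at most three states, and any interaction on at most three states is accounted for by \cref{cor: 2}.

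First I would replace $\phi$ by its completion $\hat\phi$ from \cref{lem: completion}. Since $\hat\phi$ is equivalent, hence weakly equivalent, to $\phi$ and satisfies $\Ch(S)=\C(S)$, this changes neither separability nor the space of conserved quantities, and it gives access to the defining feature of a completion: by \cref{lem: connected,lem: completion}, for $\s,\s'\in S\times S$ one has $\s\lrs_{\hat\phi}\s'$ if and only if $\xi_S(\s)=\xi_S(\s')$ for every $\xi\in\C(S)$. Next I introduce the relation $\approx$ on $S$ by declaring $s\approx s'$ exactly when $\xi(s)=\xi(s')$ for all $\xi\in\C(S)$; this is an equivalence relation, and non-separability of $\phi$ says precisely that $\approx$ identifies some pair of distinct elements, so the quotient $\bar S\coloneqq S/{\approx}$ with projection $\pi\colon S\to\bar S$ satisfies $\abs{\bar S}\le 3$. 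Every $\xi\in\C(S)$ is constant on $\approx$-classes, so it descends uniquely to $\bar\xi\colon\bar S\to\R$; setting $\bar V\coloneqq\{\bar\xi\mid\xi\in\C(S)\}$, I define $\bar\phi$ on $\bar S$ to be the completion cut out by $\bar V$, that is, $\bar\s\lra_{\bar\phi}\bar\s'$ whenever $\bar\xi_{\bar S}(\bar\s)=\bar\xi_{\bar S}(\bar\s')$ for all $\bar\xi\in\bar V$.

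The key step, which I expect to be the main obstacle, is to prove $\Consv^{\bar\phi}(\bar S)=\bar V$, i.e.\ that completing on the quotient creates no new conserved quantities. The inclusion $\bar V\subseteq\Consv^{\bar\phi}(\bar S)$ holds by construction. For the converse I would use the identity $\xi_S(\s)=\bar\xi_{\bar S}\bigl((\pi\times\pi)(\s)\bigr)$, which together with the completion description above shows that $\pi\times\pi$ sends any pair of $\hat\phi$-equivalent vertices to a pair of $\bar\phi$-equivalent vertices. Hence, given $\bar\eta\in\Consv^{\bar\phi}(\bar S)$, its pullback $\eta\coloneqq\bar\eta\circ\pi$ satisfies: $\eta_S$ is constant on the connected components of $\hat\phi$, so $\eta\in\Ch(S)=\C(S)$ by \cref{lem: completion}, and therefore $\bar\eta\in\bar V$. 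I would stress that this step genuinely relies on $\bar V$ being the descent of the conserved quantities of an honest interaction: for an arbitrary subspace the completion it cuts out can acquire extra conserved quantities, and it is exactly the equality $\Ch(S)=\C(S)$ that rules this out.

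Finally I assemble the weak equivalence. Fixing any section $\sigma\colon\bar S\to S$ of $\pi$, composition with $\pi$ gives an isomorphism $\pi^*\colon\Consv^{\bar\phi}(\bar S)=\bar V\to\C(S)$, $\bar\xi\mapsto\bar\xi\circ\pi$, whose inverse is composition with $\sigma$, because each $\xi\in\C(S)$ is $\approx$-invariant and hence satisfies $\xi\circ\sigma\circ\pi=\xi$. By \cref{def: we} this exhibits a weak equivalence $(S,\phi)\sim(\bar S,\bar\phi)$ with $\abs{\bar S}\le 3$. This already proves the first assertion when $\abs{\bar S}=3$, after identifying $\bar S$ with $\{0,1,2\}$; when $\abs{\bar S}\le 2$, \cref{cor: 1} together with \cref{cor: 2} provides an interaction on $\{0,1,2\}$ weakly equivalent to $(\bar S,\bar\phi)$, and transitivity of weak equivalence finishes the argument. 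The final statement then follows from the classification of interactions on $\{0,1,2\}$ given in \cref{cor: 2}.
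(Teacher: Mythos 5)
Your proof is correct, but it takes a noticeably different route from the paper's, so a comparison is worthwhile. The paper's proof is more economical: it picks a \emph{single} pair of states that no conserved quantity separates (after relabeling, $2$ and $3$), collapses only that pair via the map $\iota\colon S\to S'=\{0,1,2\}$ sending $3\mapsto 2$, and takes $\phi'$ to be the \emph{image} of $\phi$ under $\iota$; it then asserts that $\iota$ together with the inclusion $\iota'\colon S'\hookrightarrow S$ gives the weak equivalence of \cref{def: we}. Because only one pair is collapsed, the construction lands exactly on a three-element set, so no case analysis is needed, and the paper closes with a pre-image construction showing conversely that every interaction on $\{0,1,2\}$ arises this way (a point your proof does not need for the statement as given). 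You instead quotient by the \emph{full} indistinguishability relation, which may produce a quotient of size $1$, $2$, or $3$, forcing the extra detour through \cref{cor: 1}, \cref{cor: 2} and transitivity of weak equivalence; and you realize the quotient interaction as the completion cut out by the descended conserved quantities rather than as an image. What your approach buys is generality and explicitness: it establishes the stronger collapsing principle that every interaction on a finite set of local states is weakly equivalent to a separable one, and it isolates and actually proves the key verification---that the quotient acquires no new conserved quantities, i.e.\ $\Consv^{\bar\phi}(\bar S)=\bar V$---which in the paper is left implicit in the assertion that $\iota$ and $\iota'$ induce mutually inverse isomorphisms. Your cautionary remark is also well taken: for an arbitrary subspace of functions the completion it cuts out can have strictly more conserved quantities, so the equality $\Ch(S)=\C(S)$ from \cref{lem: completion} is doing genuine work in your argument.
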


\begin{proof}
	Suppose the interaction $(S,\phi)$ is \emph{not} separable.  
	By definition, there exists $j,k\in S$ such that $j\neq k$  and $\xi(j)=\xi(k)$ for any $\xi\in\C(S)$.
	By rearranging the local states, we can assume that $j=2$ and $k=3$.
	Then for any $l\in S$, we have $\xi_S(2,l)=\xi_S(3,l)$ for any $\xi\in\C(S)$.
	Consider the map $\iota\colon S\rightarrow S'\coloneqq\{0,1,2\}$ by $j\mapsto j$ for $j=0,1,2$ and $3\mapsto 2$
	and let $\phi'\subset (S'\times S')\times (S'\times S')$ be the image of $\phi$ with respect to this map.
	Then map $\iota$ and the inclusion $\iota'\colon S'\hookrightarrow S$ gives a weak equivalence
	$(S,\phi)\sim(S',\phi')$.   Hence $(S,\phi)$ is weakly equivalent to an interaction on $S'$.
	On the other hand, given an interaction $(S',\phi')$, let $\phi\subset (S\times S)\times (S\times S)$ be the pre-image of $\phi'$ via $\iota$.
	 Then we obtain an interaction $\phi$ on $S$ such that $\xi_S(2)=\xi_S(3)$ for any $\xi\in\C(S)$.
	 This proves our assertion.
\end{proof}

For separable interactions on $S=\{0,1,2,3\}$, we have the following.

\begin{figure}[htbp]
	\begin{center}
		\includegraphics[width=1\linewidth]{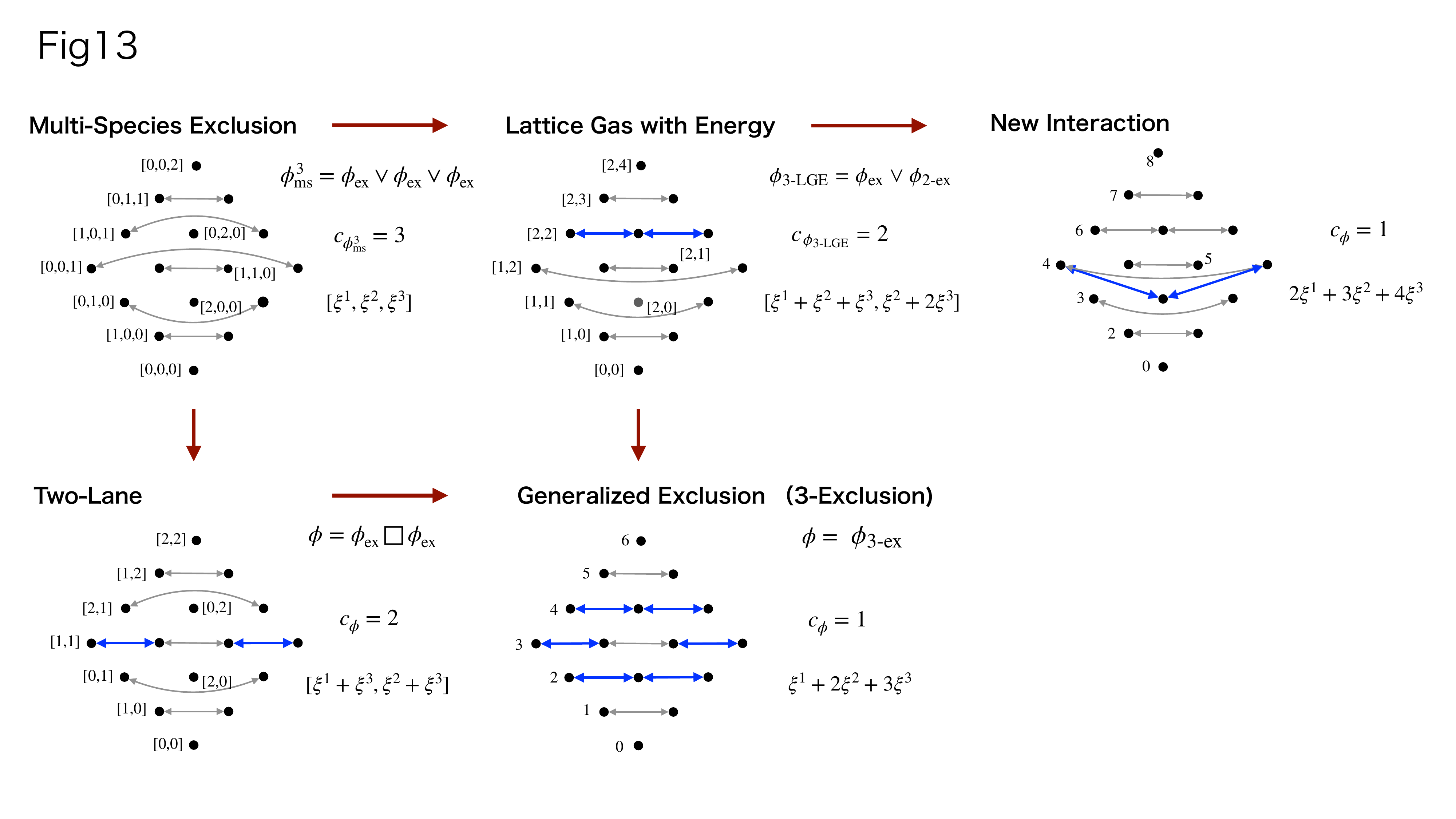}
		\caption{Classification of Equivalence Classes of Separable Interactions for $S=\{0,1,2,3\}$ with $\cp\geq 1$.
		The numbers next to the connected components are the values of the conserved quantities of that component.
		The red arrows indicate addition of an extra edge connecting two distinct connected components of the associated
		graph of the interaction.}
		\label{Fig: 10}
	\end{center}
\end{figure}

\begin{corollary}\label{cor: 3}
	Let $S=\{0,1,2,3\}$.  Then there are exactly 5 equivalence classes of 
	separable interactions on $S$, which are equivalent to the following interactions (1)--(5).
	See \cref{Fig: 10}.
	\begin{enumerate}
		\item The multi-species exclusion interaction $(S,\phi^3_\ms)$ defined in \cref{def: MS}.
		We have $\cp=3$, and $\C(S)$ is spanned by the standard basis $\xi^1,\xi^2,\xi^3$.
		\item  The lattice gas with energy $\phitLGE$ on $S$ (see \cref{def: LGE}).  We have $\cp=2$,
		and $\C(S)$ is spanned by $\xi^1+\xi^2+\xi^3$ and $\xi^2+2\xi^3$. 
		\item The two-lane exclusion interaction on $S$ 
		 (see \cref{def: RP},
		 with $N=2$),
		with the identification $S=\{0,1\}\times\{0,1\}$ mapping $0\mapsto (0,0)$,
		$1\mapsto (1,0)$, $2\mapsto (0,1)$ and $3\mapsto (1,1)$.
		We have $\cp=2$, and $\C(S)$ is spanned by $\xi^1+\xi^3$ and $\xi^2+\xi^3$.
		\item The $3$-exclusion interaction $\phitEX$ (see \cref{def: GE}).  
		We have $\cp=1$,
		and $\C(S)$ is spanned by $\xi^1+2\xi^2+3\xi^3$. 
		\item A new interaction, 
		obtained from (2) by connecting the components $(1,1)$ with $(0,3)$.   
		We have $\cp=1$, and $\C(S)$ is spanned by $2\xi^1+3\xi^2+4\xi^3$. 
	\end{enumerate}
\end{corollary}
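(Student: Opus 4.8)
The plan is to stratify the separable interactions on $S=\{0,1,2,3\}$ by the dimension $\cp=\dim_\R\C(S)\in\{0,1,2,3\}$, using \cref{prop: smallest} to replace each equivalence class by a representative $\phi\supseteq\phi^3_\ms$, so that $\phi$ arises from the multi-species interaction $\phi^3_\ms$ (for which $\cp=3$) by repeatedly connecting distinct connected components of the associated graph, each connection dropping $\cp$ by one via \cref{prop: less}. Three of the four strata are immediate. If $\cp=0$ then $\C(S)$ consists only of constants, so no conserved quantity distinguishes two states and the interaction is not separable; this stratum contributes nothing. If $\cp=3$, then by \cref{prop: smallest} the interaction is equivalent to $\phi^3_\ms$, which is separable since the standard basis $\xi^1,\xi^2,\xi^3$ already separates points (case (1)). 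If $\cp=2$, I invoke \cref{thm: 1}(3): there are exactly three equivalence classes, the diagonal--diagonal one being not separable by \cref{rem: collapse}, leaving precisely the lattice gas with energy $\phitLGE$ (case (2)) and the two-lane interaction $\phi^{\square 2}_\EX$ (case (3)), both separable because they carry an injective conserved quantity (resp.\ a conserved pair separating the four states).

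\textbf{The stratum $\cp=1$.} The substance of the argument is this stratum. Here I would pass to the completion $\hat\phi$ of \cref{lem: completion}, which is equivalent to $\phi$ and satisfies $\Ch(S)=\C(S)=\R\xi$ for some $\xi\colon S\to\R$; separability forces $\xi$ to be injective, and by \cref{lem: connected} the connected components of $(S\times S,\hat\phi)$ are exactly the level sets of $\xi_S(s_1,s_2)=\xi(s_1)+\xi(s_2)$. Two such interactions are equivalent precisely when the four-element value sets $\xi(S)\subset\R$ agree up to an affine transformation of $\R$ together with a relabeling of $S$. Thus the classification reduces to analyzing the additive coincidences $\xi(a)+\xi(b)=\xi(c)+\xi(d)$ with $\{a,b\}\neq\{c,d\}$ among a four-element set of reals: each coincidence imposes one linear condition on a candidate conserved quantity $\eta$, and since $\xi$ itself satisfies all of them, the requirement $\Ch(S)=\R\xi$ becomes the condition that these coincidences cut out a space of rank exactly $2$.

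\textbf{Enumeration.} Normalizing by affine equivalence to $\{0,b,c,1\}$ with $0<b<c<1$, a direct inspection shows the only possible nontrivial coincidences are $a+c=2b$, $b+d=2c$, $a+d=b+c$, $a+d=2b$, and $a+d=2c$; a single coincidence gives rank $1$ and corresponds to the $\cp=2$ stratum. Running through all pairs of coincidences and discarding the degenerate solutions ($b=c$, $b=0$, or $c=1$), exactly two configurations survive with rank $2$: the arithmetic progression $\{0,1,2,3\}$, giving the $3$-exclusion interaction $\phitEX$ (case (4)), and the set $\{0,1,2,4\}$, which is affinely equivalent via the reflection $x\mapsto 4-x$ to $\{0,2,3,4\}$ and gives the interaction obtained from $\phitLGE$ by connecting $(1,1)\lra(0,3)$ (case (5)). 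These two classes are genuinely distinct, since the arithmetic progression admits three additive coincidences whereas $\{0,1,2,4\}$ admits only two, and this count is affinely invariant. Summing the strata yields $0+2+2+1=5$ separable classes, as claimed.

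\textbf{Main obstacle.} The hard part will be the $\cp=1$ enumeration: one must verify that the list of candidate additive relations is exhaustive, carry out the pairwise case analysis carefully enough to see that every rank-$2$ configuration is affinely equivalent to $\{0,1,2,3\}$ or $\{0,1,2,4\}$, and in particular recognize the nonobvious affine equivalence $\{0,1,2,4\}\cong\{0,2,3,4\}$ that prevents an apparent third class from appearing. A secondary point to confirm is that, at each stage, ``adding an edge'' in the sense of \cref{prop: less} is compatible with the completion description (every rank-$2$ subspace $\R\xi$ is realized by $\hat\phi$, since $\phi^3_\ms\subseteq\hat\phi$ always), so that the stratification by edge-additions displayed in \cref{Fig: 10} matches the stratification by $\cp$.
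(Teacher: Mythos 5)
Your proposal is correct, and on the decisive stratum $\cp=1$ it takes a genuinely different route from the paper. The paper's proof stays entirely inside the graph framework: starting from $\phi^3_\ms$ it adds one edge to obtain the two separable $\cp=2$ classes (lattice gas with energy and two-lane), then adds a second edge and runs an exhaustive case analysis over the patterns (ii)/(iii), computing each resulting conserved-quantity space via \cref{prop: less}, discarding non-separable outcomes via \cref{rem: collapse}, and recognizing coincidences of classes through explicit permutations of $S$ (for instance $\sigma(0)=2$, $\sigma(1)=3$, $\sigma(2)=1$, $\sigma(3)=0$ identifying the $(0,0)\lra(1,2)$ case with the new interaction). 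You instead pass to the completion: separability together with $\cp=1$ forces the spanning conserved quantity $\xi$ to be injective, equivalence of such interactions becomes affine equivalence of the value set $\xi(S)\subset\R$, and realizability with $\cp$ exactly $1$ becomes the condition that the additive coincidences of $\xi(S)$ cut out a rank-$2$ linear system (rank $\geq 2$ would be automatic from $\phi\subset\hat\phi$, rank $>2$ would force $\cp\geq 2$ for any interaction whose conserved quantities contain $\xi$, so your reduction is sound). Your enumeration checks out: with the normalization $\{0,b,c,1\}$, $0<b<c<1$, the five coincidences you list are indeed the only ones compatible with the ordering; the pairs $\{a+c=2b,\ b+d=2c,\ a+d=b+c\}$ are linearly dependent (the third form is the sum of the first two) and any two of them give the arithmetic progression $\{0,1,2,3\}$, the pairs $\{a+c=2b,\ a+d=2c\}$ and $\{a+d=2b,\ b+d=2c\}$ give $\{0,1,2,4\}$ and $\{0,2,3,4\}$, which your reflection $x\mapsto 4-x$ identifies, and all remaining pairs degenerate to $b=0$, $c=1$ or $b=c$; distinctness of (4) and (5) follows from your affinely invariant count of coincidences (three versus two). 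What each approach buys: the paper's edge-by-edge analysis explicitly constructs the representatives of \cref{Fig: 10} inside the lattice of interactions above $\phi^3_\ms$, at the cost of a long case check and ad hoc isomorphisms; your value-set method makes the final count conceptually transparent and the identification $\{0,1,2,4\}\cong\{0,2,3,4\}$ replaces the paper's permutation argument, but it is special to the $\cp=1$ stratum, so for $\cp=2$ you must, as you do, fall back on \cref{thm: 1}(3) together with the observation that separability is an equivalence invariant.
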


\begin{proof}
	By \cref{prop: smallest}, any interaction such that $\cp=3$ is equivalent to $(S,\phi^3_\ms)$.
	This gives our classification (1).
	Any interaction on $S$ is equivalent to an interaction 
	obtained by connecting vertices in $(S\times S,\phi^3_\ms)$.
	We saw in the proof of \cref{thm: 1} that there were four patterns (i)(i')(ii)(iii) in connecting
	distinct vertices of $S\times S$ in an interaction.   Since (i)(i') produces interactions which are not separable,
	any separable interaction such that $\cp=\kappa-1=2$ 
	is obtained is obtained from the multi-species interaction via:
	(ii) the connection of a diagonal and a non-diagonal without common components
	$(a,a)\lra(b,c)$ for distinct $a,b,c\in S$, and (iii) 	
	the connection of two non-diagonal vertices without common components
	$(a,b)\lra(c,d)$ for distinct $a,b,c,d\in S$.
	Our classification (2) corresponds to (ii) 
	by connecting the vertices $(2,2)$ with $(1,3)$,
	and classification (3) corresponds to (iii), by connecting the vertices $(0,3)$ with $(1,2)$.

	The interactions such that $\cp=1$ can be constructed by connecting 
	a pair of distinct connected components in (2) or (3).
	We first consider the case starting from (2).   Since $(2,2)\lra(1,3)$,
	by \cref{rem: collapse}, connecting any other
	vertex to this component will give a non-separable interaction.
	In order to obtain a separable interaction, we have to consider patterns (ii) and (iii)
	connecting vertices outside of $(2,2)$ and $(1,3)$.
	For (ii) the connection of a diagonal and a non-diagonal without common components,
	we first consider the case when the diagonal is $(1,1)$.
	Then $(1,1)$ can connect to $(0,2)$, $(0,3)$ or $(2,3)$.
	If we connect $(1,1)\lra(0,2)$, then by \cref{prop: less},
	this gives an interaction
	whose space of conserved quantities is spanned by $\xi^1+2\xi^2+3\xi^3$,
	which is equivalent to the $3$-exclusion $\phitEX$.
	This corresponds to (4) of our classification.
	If we connect $(1,1)\lra(0,3)$, then this gives an interaction
	whose space of conserved quantities is spanned by $2\xi^1+3\xi^2+4\xi^3$,
	which gives the new interaction in (5) of our classification.
	If we connect
	$(1,1)\lra(2,3)$, then this gives an interaction 
	whose space of conserved quantities is spanned by $\wt\xi\coloneqq\xi^1+\xi^2+\xi^3$.
	Since $\wt\xi(1)=\wt\xi(2)=\wt\xi(3)=1$, this interaction is not separable.
	The case when the diagonal is $(3,3)$ is isomorphic to the case when the diagonal is $(1,1)$
	by exchanging the roles of $1$ and $3$.
	We next consider the case when the diagonal is $(0,0)$.
	Since $(0,0)$ must connect to a vertices not in the connected component containing $(1,3)$,
	the unique option up to exchange of $1$ and $3$ is to connect $(0,0)\lra(1,2)$.	
	We see that this interaction is isomorphic via the permutation $\sigma$ of $S$ 
	given by $\sigma(0)=2$, $\sigma(1)=3$, $\sigma(2)=1$ and $\sigma(3)=0$
	to the new interaction of (5).
	For (iii), since we can not connect to $(1,3)$
	the possible choices are $(0,1)\lra(2,3)$ and $(0,3)\lra(1,2)$,
	which are isomorphic if we exchange the roles of $1$ and $3$.
	If we connect $(0,3)\lra(1,2)$, then this  
	gives the an interaction
	whose space of conserved quantities is spanned by $\xi^1+2\xi^2+3\xi^3$,
	which is again equivalent to the $3$-exclusion $\phitEX$.

	We next consider the case starting from (3), which was obtained from (1) by
	connecting two non-diagonal vertices $(0,3)\lra (1,2)$.	
	In order to obtain a separable interaction, we have to consider patterns (ii) and (iii)
	connecting vertices distinct from $(0,3)$ or $(1,2)$.
	If we consider pattern (ii) connecting
	a diagonal and a non-diagonal vertices without common components,
	then we obtain an interaction isomorphic to an interaction
	obtained by starting from (2) and connecting the vertices $(0,3)\lra(1,2)$.
	This is again equivalent to the $3$-exclusion $\phitEX$.
	Finally, we consider pattern (iii) connecting 
	two non-diagonal vertices of (3) without common components.
	The possibilities are $(0,1)\lra(2,3)$ and $(0,2)\lra(1,3)$,
	which are isomorphic if we replace the roles of $1$ and $2$.
	If we let $\phi'$ be the interaction obtained from (3) by connecting $(0,1)\lra (2,3)$, then 
	$\Cd(S)$ is spanned by $\wt\xi\coloneqq\xi^1+\xi^3$.  
	Since $\wt\xi(1)=\wt\xi(3)=1$, this shows that $(S,\phi')$ 
	is an interaction which is not separable.
	
	We have exhausted all possible combinations for $\cp\geq 1$.
	If $\cp=0$, then the interaction is equivalent to a complete graph,
	hence not separable.
	Our assertion gives the complete classification
	of separable interactions on $S=\{0,1,2,3\}$.
\end{proof}

\begin{remark}
	For the case $S=\{0,1,2,3,4\}$, numerical calculation shows that
	there exists $1$ equivalence class of separable interaction such that $\cp=4$,
	$2$ equivalence classes of separable interaction such that $\cp=3$,
	$8$ equivalence classes of separable interaction such that $\cp=2$,
	and $11$ equivalence classes of separable interaction such that $\cp=1$.
	See \cite{W24}*{Appendix} for a list of irreducibly quantified interactions 
	in this case.
\end{remark}

%
%
%
\section{Irreducibly Quantified Interactions}\label{sec: IQ}
%
%
%

Let $(X,E)$ be a symmetric directed graph.  For any interaction $(S,\phi)$, we call
$\e=(\e_x)\in S^X\coloneqq\prod_{x\in X}S$ a \emph{configuration}, and $S^X$ a \emph{configuration space}.

\begin{lemma}
	For a symmetric directed graph $(X,E)$ and an interaction $(S,\phi)$,
	we let
	\[
		\Phi_E\coloneqq\{ (\e,\e')\in S^X\times S^X\mid\exists e\in E,
		((\e_{oe},\e_{te}),(\e'_{oe},\e'_{te})) \in\phi, \,\,\e_x=\e'_x\,\, \forall x\neq oe,te\}.
	\]
	Then the pair $(S^X,\Phi_E)$ is a symmetric directed graph.
	We call  $(S^X,\Phi_E)$ the configuration space with transition structure of 
	the interaction $(S,\phi)$ on the graph $(X,E)$.
\end{lemma}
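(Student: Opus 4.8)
The plan is to observe that $(S^X,\Phi_E)$ is automatically a directed graph, since $\Phi_E$ is by construction a subset of $S^X\times S^X$; hence the only substantive point is to verify the symmetry condition, namely that $(\e,\e')\in\Phi_E$ implies $(\e',\e)\in\Phi_E$. No dimension count or structural argument is needed, so the whole proof reduces to a direct unwinding of the defining set together with one appeal to the symmetry built into an interaction.

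First I would take an arbitrary pair $(\e,\e')\in\Phi_E$ and unwind the definition: there is an edge $e\in E$ with $((\e_{oe},\e_{te}),(\e'_{oe},\e'_{te}))\in\phi$ and $\e_x=\e'_x$ for all $x\neq oe,te$. The key step is then to invoke the symmetry of the interaction $\phi$ itself, from \cref{def: PI}: since $\phi$ is symmetric, $((\e_{oe},\e_{te}),(\e'_{oe},\e'_{te}))\in\phi$ yields $((\e'_{oe},\e'_{te}),(\e_{oe},\e_{te}))\in\phi$. Retaining the \emph{same} edge $e$ as witness, and noting that the coincidence condition $\e_x=\e'_x$ off $\{oe,te\}$ is manifestly symmetric in $\e$ and $\e'$, the two defining conditions for $(\e',\e)\in\Phi_E$ are satisfied, which establishes the symmetry of $\Phi_E$.

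The only point requiring care is to appeal to the symmetry of $\phi$ rather than to the symmetry of the underlying graph $(X,E)$: the reversal that matters here is the reversal of the transition recorded in $\phi$, whereas the witnessing edge $e$ is reused verbatim and one does not pass to $\bar e$. Beyond keeping this distinction straight, I anticipate no genuine obstacle; the verification is short and entirely formal, and the symmetry of $(X,E)$ plays no role in this particular statement.
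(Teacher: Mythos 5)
Your proposal is correct and follows essentially the same argument as the paper: both note that the graph structure is automatic from $\Phi_E\subset S^X\times S^X$, then verify symmetry by reusing the same witnessing edge $e\in E$ and appealing to the symmetry of $\phi$ from \cref{def: PI}. Your closing observation — that the symmetry of $(X,E)$ plays no role and one does not pass to $\bar e$ — is accurate and consistent with how the paper's proof proceeds.
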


\begin{proof}
	Since $\Phi_E\subset S^X\times S^X$, the pair $(S^X,\Phi_E)$ is a  graph.
	If $(\e,\e')\in\Phi_E$, then there exists $e\in E$ such that $\e_x=\e'_x$ for any $x\neq oe,te$ and
	$(\e_e,\e'_e)\in\phi$ where $\eta_e=(\e_{oe},\e_{te})$. 
	Since $(S\times S,\phi)$ is a symmetric graph, we see that $(\e'_e,\e_e)\in\phi$.
	Hence we have $(\e',\e)\in\Phi_E$.  This shows that $(S^X,\Phi_E)$ is symmetric as desired.
\end{proof}

Fix an interaction $(S,\phi)$.
For a finite graph $(X,E)$ and conserved quantity $\xi\colon S\rightarrow\R$, let $\xi_X\colon S^X\rightarrow\R$
be the function defined as
\[
	\xi_X(\e)\coloneqq\sum_{x\in X}\xi(\e_x),\qquad \e=(\e_x)\in S^X.
\] 
The sum is well-defined since we have assumed that $X$ is finite.

Consider the finite graph $(X,E)$ given by $X=\{1,2\}$ and $E=\{(1,2),(2,1)\}\subset X\times X$.
Then we have $S^X=S\times S$ and $\Phi_E=\phi\subset (S\times S)\times (S\times S)$.
Hence $(S^X,\Phi_E)$ coincides with the graph $(S\times S,\phi)$.
For any conserved quantity $\xi\colon S\rightarrow\R$, the function $\xi_X$ coincides with the function $\xi_S\colon S\times S\rightarrow\R$
defined in \S \ref{sec: I}.

The following irreducibly quantified condition, which we view as a condition on the interaction,
plays an important role in the theory of \cite{BKS20}.

\begin{definition}\label{def: IQ}
	We say that an interaction $(S,\phi)$ is \emph{irreducibly quantified}, if it satisfies the following condition:
	For any finite \emph{connected} graph $(X,E)$ and associated configuration space with transition structure
	$(S^X,\Phi_E)$, if $\e,\e'\in S^X$ satisfies $\xi_X(\e)=\xi_X(\e')$ for any conserved quantity $\xi\in\C(S)$,
	then $\e,\e'\in S^X$ are in the same connected component of $(S^X,\Phi_E)$.
\end{definition}

For $\e,\e'\in S^X$, then we denote $\e\lrs_{\Phi_E}\e'$ if $\e$ and $\e'$ are in the same connected
component of $(S^X,\Phi_E)$.  We have the following.

\begin{lemma}
	If $(S,\phi)$ is irreducibly quantified, then
	for any finite \emph{connected} graph $(X,E)$ and associated configuration space with transition structure
	$(S^X,\Phi_E)$,  we have $\xi_X(\e)=\xi_X(\e')$ for any conserved quantity $\xi\in\C(S)$
	if and only if $\e\lrs_{\Phi_E}\e'$.
\end{lemma}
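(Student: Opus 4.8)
The plan is to treat the two implications of the biconditional separately, noting that one of them is essentially free. The forward implication --- that $\xi_X(\e)=\xi_X(\e')$ for every conserved quantity $\xi\in\C(S)$ forces $\e\lrs_{\Phi_E}\e'$ --- is, for a fixed finite connected graph $(X,E)$, precisely the defining condition of $(S,\phi)$ being irreducibly quantified recorded in \cref{def: IQ}; so under the hypothesis of the lemma it holds with nothing to prove. Only the converse implication requires an argument, and this direction in fact holds for every interaction, irreducibly quantified or not, since it merely expresses that conserved quantities are genuinely conserved along transitions.

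For the converse, suppose $\e\lrs_{\Phi_E}\e'$, so that $\e$ and $\e'$ lie in the same connected component of $(S^X,\Phi_E)$. First I would reduce to a single edge: by definition there is a finite sequence $\e=\e^1,\e^2,\ldots,\e^N=\e'$ with $(\e^i,\e^{i+1})\in\Phi_E$ for each $i$, and since equality of $\xi_X$ is transitive, it suffices to show $\xi_X(\e)=\xi_X(\e')$ under the assumption that $(\e,\e')\in\Phi_E$ is a single edge.

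So fix an edge $(\e,\e')\in\Phi_E$. By the definition of $\Phi_E$ there is $e\in E$ with $((\e_{oe},\e_{te}),(\e'_{oe},\e'_{te}))\in\phi$ and $\e_x=\e'_x$ for all $x\neq oe,te$. In the difference $\xi_X(\e')-\xi_X(\e)=\sum_{x\in X}\bigl(\xi(\e'_x)-\xi(\e_x)\bigr)$ every term with $x\neq oe,te$ cancels, so it collapses to $\bigl(\xi(\e'_{oe})+\xi(\e'_{te})\bigr)-\bigl(\xi(\e_{oe})+\xi(\e_{te})\bigr)$. Because the pair $((\e_{oe},\e_{te}),(\e'_{oe},\e'_{te}))$ lies in $\phi$ and $\xi$ is a conserved quantity, the defining relation \eqref{eq: 1} shows this difference vanishes, giving $\xi_X(\e)=\xi_X(\e')$ as required.

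There is no real obstacle in this argument. The substance of the lemma is carried entirely by the irreducibly quantified hypothesis, which supplies the forward implication verbatim; the converse is the routine telescoping reduction to the single-edge case, where it follows immediately from the definition of a conserved quantity. The only point worth stating carefully is that the converse uses nothing about $(S,\phi)$ beyond \eqref{eq: 1}, so that the biconditional fails in general exactly when the forward implication --- i.e.\ irreducible quantification --- fails.
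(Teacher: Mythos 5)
Your proof is correct and matches the argument the paper intends: the paper states this lemma without proof precisely because the forward implication is \cref{def: IQ} verbatim, and your converse direction is the routine edge-by-edge conservation argument (telescoping along a path in $(S^X,\Phi_E)$ and applying \eqref{eq: 1} at the single edge where the configurations differ), which is the same reasoning as the proof of \cref{lem: connected} lifted to the configuration space. Your observation that the converse holds for arbitrary interactions, so that the entire content of the biconditional rests on the irreducibly quantified hypothesis, is also exactly the point the paper makes in the sentence following the lemma.
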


Thus, the irreducibly quantified condition insures that the value of the conserved quantities characterizes
the connected components of the configuration space on any finite connected graph.
This is a crucial property for the theory of hydrodynamic limits.
Note that the irreducibly quantified condition is not in general preserved via equivalence of interactions. 
For example the interaction given in \cref{Fig: 14} is
not irreducibly quantified.  However, it is equivalent to the $3$-exclusion,
which is irreducibly quantified (see \cref{thm: main} (1)).

The irreducibly quantified condition implies that the interaction is separable and exchangeable.

\begin{lemma}\label{lem: FI}
	If an interaction $(S,\phi)$ is irreducibly quantified, then $(S,\phi)$ is separable
	and exchangeable.
\end{lemma}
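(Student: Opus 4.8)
The plan is to read off both conclusions directly from \cref{def: IQ} by feeding it two well-chosen finite connected graphs $(X,E)$: the two-vertex graph yields exchangeability, and the one-vertex graph yields separability. In each case the mechanism is identical — exhibit a pair of configurations whose $\xi_X$-values agree for \emph{every} conserved quantity $\xi\in\C(S)$, so that the irreducibly quantified hypothesis forces them into a common connected component of $(S^X,\Phi_E)$, and then read the desired statement off the structure of that component.

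For exchangeability I would take $X=\{1,2\}$ and $E=\{(1,2),(2,1)\}$, for which it was already observed that $(S^X,\Phi_E)=(S\times S,\phi)$ and $\xi_X=\xi_S$. Given any $(s_1,s_2)\in S\times S$, the configurations $\eta=(s_1,s_2)$ and $\eta'=(s_2,s_1)$ satisfy $\xi_X(\eta)=\xi(s_1)+\xi(s_2)=\xi(s_2)+\xi(s_1)=\xi_X(\eta')$ for every conserved quantity $\xi$. Applying \cref{def: IQ} to this connected graph then gives $\eta\lrs_\phi\eta'$, that is $(s_1,s_2)\lrs(s_2,s_1)$, which is precisely exchangeability.

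For separability I would instead use the single-vertex graph $X=\{1\}$ with $E=\emptyset$. This is a finite connected (vacuously symmetric) graph, and since $E$ is empty we have $\Phi_E=\emptyset$, so $(S^X,\Phi_E)=(S,\emptyset)$ has only singleton connected components while $\xi_X$ reduces to $\xi$ itself. Now suppose $s,s'\in S$ satisfy $\xi(s)=\xi(s')$ for every $\xi\in\C(S)$; then $\xi_X(s)=\xi_X(s')$ for all conserved $\xi$, and \cref{def: IQ} forces $s$ and $s'$ into the same connected component of $(S,\emptyset)$, hence $s=s'$. This is exactly the separability condition.

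The main obstacle here is conceptual rather than computational: one must notice that the two-site graph does \emph{not} deliver separability — it only produces $(s,t)\lrs(s',t)$, which is perfectly consistent with $\xi(s)=\xi(s')$ and yields no contradiction — so the degenerate edgeless one-vertex graph is the essential ingredient. I would therefore take care to confirm that such a graph is admissible in \cref{def: IQ}, namely that it is finite, connected, and symmetric. Once that point is settled, everything else is a direct verification.
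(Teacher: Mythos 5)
Your proof is correct and follows essentially the same route as the paper's: the paper likewise applies \cref{def: IQ} to the single-vertex edgeless graph to obtain separability and to the two-vertex graph $X=\{1,2\}$, $E=\{(1,2),(2,1)\}$ (where $(S^X,\Phi_E)=(S\times S,\phi)$) to obtain exchangeability. Your closing observation that the one-vertex graph is the essential ingredient for separability is a fair conceptual remark, but it does not change the fact that the two arguments coincide.
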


\begin{proof}
	Suppose $(S,\phi)$ is irreducibly quantified.
	If we consider the connected graph $(X,E)$ given by $X=\{*\}$ and $E=\emptyset$,
	then $S^X=S$ and $\Phi_E=\emptyset$.  
	For any $s,s'\in S$, if $\xi(s)=\xi(s')$ for any conserved quantity $\xi\in\C(S)$,
	then $s\lrs_{\emptyset} s'$, in other words, that $s=s'$.
	This shows that $(S,\phi)$ is separable as desired.
	Next, let $(X,E)$ be the connected graph given by $X=\{1,2\}$ and $E=\{(1,2),(2,1)\}$.
	Let $(s_1,s_2)\in S^X=S\times S$.  Then
	for any conserved quantity $\xi\in\C(S)$, we have 
	$\xi_S(s_1,s_2)=\xi(s_1)+\xi(s_2)=\xi(s_2)+\xi(s_1)=\xi_S(s_2,s_1)$.
	Since $(S,\phi)$ is irreducibly quantified, we have
	$(s_1,s_2)\lrs_{\Phi_E}(s_2,s_1)$,
	hence $(s_1,s_2)\lrs_{\phi}(s_2,s_1)$.  This shows that $(S,\phi)$ is exchangeable
	as desired.
\end{proof}

On the other hand, as in \cref{Fig: 14}, there are examples of interactions which
are separable and exchangeable, but not irreducibly quantified.
See Komiya \cite{Ko23} or Giavarini \cite{G23} for other examples of such interactions.

\begin{figure}[htbp]
	\begin{center}
		\includegraphics[width=0.35\linewidth]{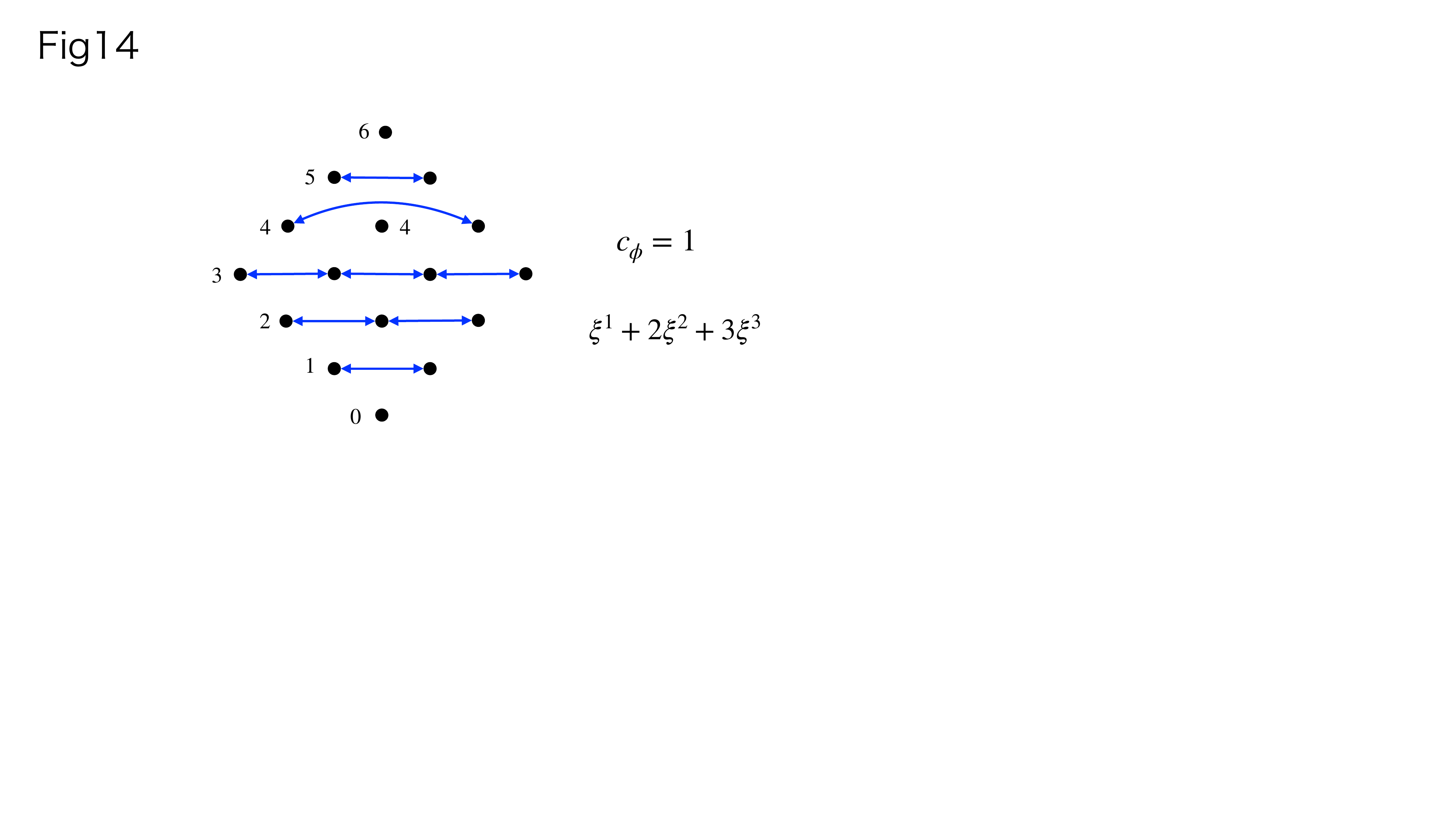}
		\caption{The above interaction is separable and exchangeable, but not irreducibly quantified.
		The numbers next to the vertices expresses the value of the conserved 
		quantity $\xi\coloneqq\xi^1+2\xi^2+3\xi^3$.
		This is equivalent to the $3$-exclusion process $\phitEX$, which is
		irreducibly quantified.}
		\label{Fig: 14}
	\end{center}
\end{figure}

Suppose now that $(S,\phi)$ is an interaction which is exchangeable.
Then by definition, for any $(s_1,s_2)\in S\times S$, we have $(s_1,s_2)\lrs_\phi(s_2,s_1)$.
Let $\e\in S^X$.  For any $x,y\in X$, we define $\e^{x,y}=(\e^{x,y}_z)\in S^X$ 
to be the configuration obtained by exchanging the $x$ and $y$ components of $\e=(\e_z)$.
In other words,
\[
	\e^{x,y}_z\coloneqq
	\begin{cases}
		\e_z &   z\neq x,y,  \\
		\e_y, & z=x, \\ 
		\e_x & x=y.
	\end{cases}
\] 
In particular, for any $e\in E$, we let $\e^{oe,te}$
be the configuration obtained by exchanging the components on the origin and target of $e$.
Note that $\e^{oe,te}_x=\e_x$ for $x\neq oe,te$ and
$(\e^{oe,te}_{oe},\e^{oe,te}_{te})=(\e_{te},\e_{oe})$.
Hence by the definition of $\Phi_E$, we see that $\e\lrs_{\Phi_E}\e^{oe,te}$.

More generally, for an exchangeable interaction, any configuration 
obtained by reshuffling a finite number of components of $\e$
is contained in the same connected component of the configuration space,
as follows:

\begin{lemma}\label{lem: exchangeable}
	Let $(S,\phi)$ be an  interaction which is exchangeable.
	Then for any connected symmetric directed graph $(X,E)$,
	configuration $\e=(\e_z)\in S^X$ and vertices $x,y\in X$,
	we have $\e\lrs_{\Phi_E}\e^{x,y}$.
	Moreover,  if $\e^\sigma=(\e^\sigma_z)\in S^X$ is obtained by shuffling 
	a finite number of components of $\e$ (i.e.\ if there exists a bijection $\sigma: X\rightarrow X$ 
	such that
	$\sigma(z)=z$ for all but finite $z\in X$
	and 
	$\e^\sigma_z\coloneqq\e_{\sigma(z)}$ for any $z\in X$), then we have $\e\lrs_{\Phi_E}\e^\sigma$.
\end{lemma}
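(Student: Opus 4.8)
\section*{Proof proposal}

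The plan is to bootstrap from a single edge-swap to an arbitrary transposition, and then from transpositions to an arbitrary finitely-supported permutation. The only property of $\phi$ I shall use is the one recorded just before the statement: for every edge $e\in E$ and \emph{every} configuration $\eta\in S^X$, exchangeability gives $\eta\lrs_{\Phi_E}\eta^{oe,te}$. Since $\lrs_{\Phi_E}$ is an equivalence relation (namely, lying in the same connected component of $(S^X,\Phi_E)$), any finite composition of such edge-swaps again preserves the connected component; this transitivity is what I shall repeatedly invoke.

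First I would prove the assertion $\e\lrs_{\Phi_E}\e^{x,y}$ for arbitrary $x,y\in X$, by induction on the length $d$ of a path joining $x$ and $y$, which exists because $(X,E)$ is connected. When $d=1$ the vertices $x$ and $y$ are joined by an edge $e$, so $\e^{x,y}=\e^{oe,te}$ and the claim is exactly the edge-swap fact. For $d>1$, I would pick a neighbour $z$ of $y$ lying on a shortest path from $x$, so that $x$ and $z$ are joined by a path of length $d-1$ while $z$ and $y$ are adjacent. The crux is the conjugation identity
\begin{equation*}
	\e^{x,y}=\bigl((\e^{x,z})^{z,y}\bigr)^{x,z},
\end{equation*}
which one verifies by tracking the values at the three positions $x,y,z$. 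Applying the induction hypothesis to the pair $(x,z)$ and the base case to the adjacent pair $(z,y)$---each to the relevant intermediate configuration---then yields the chain
\begin{equation*}
	\e\lrs_{\Phi_E}\e^{x,z}\lrs_{\Phi_E}(\e^{x,z})^{z,y}\lrs_{\Phi_E}\bigl((\e^{x,z})^{z,y}\bigr)^{x,z}=\e^{x,y},
\end{equation*}
and transitivity of $\lrs_{\Phi_E}$ gives $\e\lrs_{\Phi_E}\e^{x,y}$.

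For the second assertion, I would use that a bijection $\sigma\colon X\to X$ fixing all but finitely many vertices has finite support, hence restricts to a permutation of a finite subset of $X$ and can be written as a finite product of transpositions $\tau_1\cdots\tau_m$, each interchanging two vertices of $X$. Realizing $\e^\sigma$ as the result of applying these transpositions successively, every application stays in the same connected component by the first assertion (applied to the current configuration), so transitivity of $\lrs_{\Phi_E}$ gives $\e\lrs_{\Phi_E}\e^\sigma$.

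The only substantive point is the first assertion: expressing the transposition of two \emph{arbitrary} vertices as a product of swaps along edges, where connectivity of $(X,E)$ is precisely what supplies the path and hence the factorization. Once this is in hand, the passage to a general $\sigma$ is the routine fact that finitely-supported permutations are generated by transpositions, together with the transitivity already used throughout.
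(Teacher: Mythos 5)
Your proposal is correct and follows essentially the same route as the paper's proof: both reduce the transposition $\e^{x,y}$ to a composition of edge-swaps along a path from $x$ to $y$ supplied by connectivity (your induction with the conjugation identity $\e^{x,y}=\bigl((\e^{x,z})^{z,y}\bigr)^{x,z}$ just makes explicit the composition the paper asserts in one line), and both handle general $\sigma$ by factoring a finitely-supported permutation into transpositions. The only cosmetic difference is that the paper also notes the trivial case $x=y$, which your induction starting at path length $1$ leaves implicit.
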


\begin{proof}
	If $x=y$, then there is nothing to prove.  Suppose $\abs{X}>1$ and $x\neq y$.
	Since $(X,E)$ is connected, there exists a path $\vec p=(e^1,\ldots,e^N)$ from $x$ to $y$.
	In other words, $x=oe^1$, $y=te^N$, and $te^i=oe^{i+1}$ for $1\leq i<N$.
	We take $\vec p$ to be a minimal path so that $x$ and $te^i$ for $i=1,\ldots,N$ are all distinct.
	Our assertion follows from the fact that $\e^{x,y}$ is obtained from $\e$ as certain compositions of transpositions
	$\e\mapsto\e^{oe^i,te^i}$ along the edges of $\vec p$.
	The second assertion $\e\lrs_{\Phi_E}\e^\sigma$ follows from the fact that any bijection
	$\sigma\colon X\rightarrow X$ obtained by shuffling a finite number of vertices
	can be expressed as a composition of transposition between two elements.
\end{proof}

Using \cref{lem: exchangeable}, we can prove that the wedge sum of irreducibly quantified interactions
is again irreducibly quantified.  We first prove the following.

\begin{lemma}\label{lem: exchange}
	Suppose $(S_1,\phi_1)$ and $(S_2,\phi_2)$ are interactions which are exchangeable.
	Then the wedge sum $(S_1,\phi_1)\vee(S_2,\phi_2)$ along $*_1\in S_1$ and $*_2\in S_2$
	is also exchangeable.
\end{lemma}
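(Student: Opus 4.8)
The plan is to verify the defining property of exchangeability directly. Writing $S\coloneqq S_1\vee S_2$ and $\phi\coloneqq\phi_1\vee\phi_2$, I would fix an arbitrary pair $(t_1,t_2)\in S\times S$ and show that $(t_1,t_2)\lrs_\phi(t_2,t_1)$ by a case analysis on which of the two summands the coordinates belong to. Recall that every element of $S$ lies in $S_1$ or in $S_2$, with the basepoint $*$ belonging to both. The three cases are: (a) both $t_1,t_2\in S_1$; (b) both $t_1,t_2\in S_2$; and (c) one coordinate lies in $S_1\setminus\{*\}$ and the other in $S_2\setminus\{*\}$. These are exhaustive, since if the pair is neither entirely in $S_1$ nor entirely in $S_2$, then one coordinate must be strictly in $S_1$ and the other strictly in $S_2$.

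In case (a), exchangeability of $\phi_1$ supplies a path from $(t_1,t_2)$ to $(t_2,t_1)$ inside the graph $(S_1\times S_1,\phi_1)$; since the wedge sum contains all edges of $\phi_1$ by \cref{def: WS}, this same path lives in $(S\times S,\phi)$, so $(t_1,t_2)\lrs_\phi(t_2,t_1)$. Case (b) is identical with the roles of $\phi_1$ and $\phi_2$ interchanged. In case (c), the wedge sum was defined so as to contain the swap edge $(s,s')\lra(s',s)$ for $s\in S_1$ and $s'\in S_2$; applying this (and, when the coordinates occur in the opposite order, the symmetry of the graph) yields a single edge between $(t_1,t_2)$ and $(t_2,t_1)$, hence certainly the desired connectivity.

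The only point requiring care, and the step I would flag as the main obstacle even though it is a mild one, is the treatment of the shared basepoint $*$, which prevents the three cases from being literally disjoint. I would resolve this by reading the case split in order: first ``both coordinates in $S_1$'', then ``both coordinates in $S_2$'', and finally the genuinely mixed case. Whenever one coordinate equals $*$, it can be regarded as lying in whichever summand contains the other coordinate, so the pair already falls into case (a) or (b) and no mixed swap edge is invoked. Making this assignment explicit guarantees that the cases are exhaustive and that each use of the wedge-sum edges is legitimate; everything else is immediate from \cref{def: WS} and \cref{def: exchangeable}.
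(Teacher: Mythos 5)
Your proposal is correct and follows essentially the same case analysis as the paper's proof: both coordinates in $S_1$ (or both in $S_2$) handled by exchangeability of the summand together with $\phi_i\subset\phi_1\vee\phi_2$, and the mixed case handled by the swap edge built into \cref{def: WS}. Your explicit treatment of the basepoint $*$ is a careful touch the paper leaves implicit, but it does not change the argument.
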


\begin{proof}
	Let $(S,\phi)=(S_1,\phi_1)\vee(S_2,\phi_2)$.
	Suppose $(s,s')\in S\times S$.  If $s,s'\in S_1$
	or $s,s'\in S_2$, then $(s,s')\lrs(s',s)$ since $(S_1,\phi_1)$ and $(S_2,\phi_2)$ are
	both exchangeable.  Moreover, if $s\in S_1$ and $s'\in S_2$, then we have
	$(s,s')\lrs_{\phi_1\vee\phi_2}(s',s)$ by the definition of the wedge sum given in \cref{def: WS}.
	This proves our assertion.
\end{proof}

We have the following.

\begin{proposition}\label{prop: WS=IQ}
	Suppose $(S_1,\phi_1)$ and $(S_2,\phi_2)$ are interactions which are irreducibly quantified.
	Then the wedge sum $(S_1,\phi_1)\vee(S_2,\phi_2)$ along $*_1\in S_1$ and $*_2\in S_2$
	is also an irreducibly quantified interaction.
\end{proposition}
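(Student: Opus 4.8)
The plan is to reduce the irreducibly quantified condition for $\phi_1\vee\phi_2$ to the same condition for the two factors, exploiting that the wedge sum is exchangeable. First I would record that, since $\phi_1$ and $\phi_2$ are irreducibly quantified, they are exchangeable by \cref{lem: FI}, hence $(S_1\vee S_2,\phi_1\vee\phi_2)$ is exchangeable by \cref{lem: exchange}. Fix a finite connected graph $(X,E)$ with $n\coloneqq\abs{X}$ vertices, write $S\coloneqq S_1\vee S_2$, and let $*$ denote the common basepoint. The case $n=1$ (where $E=\emptyset$) asks exactly that $S$ be separable: this follows from \cref{prop: WS}, since $\Consv^{\phi_1\vee\phi_2}(S)=\Consv^{\phi_1}(S_1)\oplus\Consv^{\phi_2}(S_2)$ together with the separability of each factor (a consequence of \cref{lem: FI}) distinguishes any two distinct local states. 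So I would assume $n\geq 2$, whence $(X,E)$ has at least one edge.

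The key preliminary step is a reduction to \emph{abstract moves}. Because $(S,\phi_1\vee\phi_2)$ is exchangeable and $(X,E)$ is connected, \cref{lem: exchangeable} realizes every finite permutation of the components inside a single connected component of $(S^X,\Phi_E)$; so for any two positions I may permute a chosen pair of values onto the endpoints of some edge, apply a transition there, and permute back. Consequently $\e\lrs_{\Phi_E}\e'$ holds if and only if the multisets of values of $\e$ and $\e'$ are connected under abstract moves that apply a $\phi_1$- or $\phi_2$-transition to an arbitrary pair of components. I would then classify each component as \emph{type $1$} (value in $S_1\setminus\{*\}$), \emph{type $2$} (value in $S_2\setminus\{*\}$), or a basepoint, and write $T_i(\e)$ for the multiset of type-$i$ values of $\e$. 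A $\phi_1$-move only rearranges the type-$1$ and basepoint components (the $S_1$-tokens) and leaves type-$2$ components untouched, and symmetrically for $\phi_2$; the two sectors interact only through the shared basepoint.

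Next I would translate the hypothesis. By \cref{prop: WS} a conserved quantity of the wedge is a pair $(\xi_1,\xi_2)$ with $\xi_i\in\Consv^{\phi_i}(S_i)$ normalized by $\xi_i(*)=0$, and then $\xi_X(\e)$ equals the $\xi_1$-sum over $T_1(\e)$, respectively the $\xi_2$-sum over $T_2(\e)$. Thus $\xi_X(\e)=\xi_X(\e')$ for all conserved $\xi$ says precisely that $T_1(\e)$ and $T_1(\e')$ have equal $\Consv^{\phi_1}(S_1)$-sums and $T_2(\e)$, $T_2(\e')$ have equal $\Consv^{\phi_2}(S_2)$-sums. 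The single-interaction form of the same abstract-move reduction, applied to the irreducibly quantified $\phi_i$ on the complete graph, shows that two $S_i$-configurations on a fixed number of tokens lie in one component of the $\phi_i$-dynamics if and only if they have equal $\Consv^{\phi_i}(S_i)$-sums; hence the type-$i$ content of $\e$ can be evolved, using $\phi_i$-moves on its $S_i$-tokens, to any $\Consv^{\phi_i}$-equivalent $S_i$-configuration on the same number of tokens, provided enough tokens are available to host it. I would then connect both $\e$ and $\e'$ to a common hub $h$: letting $r_i$ be the least number of non-basepoint tokens in any type-$i$ multiset realizing the prescribed $\Consv^{\phi_i}$-value (this value depends only on the shared conserved data), let $h$ consist of a fixed minimal type-$1$ representative, a fixed minimal type-$2$ representative, and $n-r_1-r_2$ basepoints. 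Reducing first the type-$2$ content of $\e$ to its minimal representative (which frees basepoints), then the type-$1$ content, and finally permuting, yields $\e\lrs_{\Phi_E}h$; the identical procedure gives $\e'\lrs_{\Phi_E}h$, and transitivity of $\lrs_{\Phi_E}$ finishes the proof.

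The main obstacle is the bookkeeping of the shared basepoint: the two sectors compete for basepoints, and neither $\abs{T_1(\e)}$ nor $\abs{T_2(\e)}$ is individually conserved, so a direct attempt to match the $S_1$-content of $\e$ to that of $\e'$ can fail for lack of free tokens. Routing through the minimal hub is exactly what avoids this, since reducing each sector to its minimal representative only decreases the token demand, and the inequality $r_1+r_2\leq\abs{T_1(\e)}+\abs{T_2(\e)}\leq n$ (using $r_i\leq$ the type-$i$ size of $\e$) guarantees both that $h$ fits on $n$ tokens and that, after the first reduction leaves $n-r_2$ available $S_1$-tokens, there is room for the minimal type-$1$ representative. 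Verifying this inequality and checking that each reduction lands on the \emph{chosen} representative (which the factor's irreducibly quantified property on the complete graph does provide) are the points requiring care; everything else is the routine dictionary between $\Phi_E$-connectivity and the abstract multiset dynamics.
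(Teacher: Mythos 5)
Your proposal is correct, and while it opens with the same moves as the paper --- deducing exchangeability of the wedge from \cref{lem: FI} and \cref{lem: exchange}, using \cref{lem: exchangeable} to permute components freely, and using \cref{prop: WS} to split conserved quantities into the two sectors --- the core mechanism is genuinely different. The paper builds a \emph{single} intermediate configuration $\e''$ (equal to $\e'$ on $X_1'$, to the basepoint on $X_1\setminus X_1'$, and to $\e$ elsewhere) and invokes the irreducibly quantified hypothesis just twice, for $\phi_1$ on the induced subgraph $(X_1,E_1)$ and for $\phi_2$ on $(X_2,E_2)$; the basepoint bookkeeping is absorbed into the geometric arrangement, via exchangeability, that the $S_1$-regions $X_1'\subset X_1$ of $\e'$ and $\e$ be connected and nested. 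You instead eliminate the graph $(X,E)$ altogether through your abstract-move (multiset) reduction, apply the hypothesis of each factor on \emph{complete} graphs over token sets, and connect both $\e$ and $\e'$ to a canonical hub built from minimal representatives, with the inequality $r_1+r_2\leq \abs{T_1(\e)}+\abs{T_2(\e)}\leq \abs{X}$ doing the bookkeeping. The paper's route buys brevity (one intermediate, two invocations of the hypothesis) at the price of the slightly delicate claim that the $S_1$-regions can be made connected and nested; your route costs four invocations and the minimal-representative apparatus, but it resolves the competition for basepoints --- which you correctly identify as the crux --- by transparent counting, and it decouples the argument from the structure of $(X,E)$ entirely. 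The only glossed point worth a line in a full write-up is cross-sector separability in your $n=1$ case: for $s\in S_1\setminus\{*\}$ and $s'\in S_2\setminus\{*\}$, take $\xi_1\in\Consv^{\phi_1}(S_1)$ with $\xi_1(*_1)=0$ and $\xi_1(s)\neq 0$ (possible since $\phi_1$ is separable by \cref{lem: FI}) and extend it by zero on $S_2$; this conserved quantity of the wedge distinguishes $s$ from $s'$.
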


\begin{proof}
	Since $(S_1,\phi_1)$ and $(S_2,\phi_2)$ are irreducibly quantified,
	by \cref{lem: FI}, they are both separable.
	Let $(S,\phi)=(S_1,\phi_1)\vee(S_2,\phi_2)$.
	We view $S_1,S_2\subset S$, and $S_1\cap S_2=\{*\}$.
	In order to prove that $(S,\phi)$ is irreducibly quantified,
	consider a finite connected symmetric directed graph $(X,E)$,
	and suppose $\e,\e'\in S^X$ satisfies $\xi_X(\e)=\xi_X(\e')$ for any $\xi\in\C(S)$.
	Since $(S,\phi)$ is exchangeable by \cref{lem: exchange},
	by rearranging the components of $\e$ and $\e'$ as in \cref{lem: exchangeable}
	and exchanging the roles of $\e$ and $\e'$ if necessary,
	we may assume that
	$X_1=\{ x\in X\mid \e_x\in S_1 \}$ and $X'_1= \{ x\in X\mid \e'_x\in S_1\}$
	are both connected and satisfy $X_1'\subset X_1\subset X$.
	Let $\e''=(\e''_z)\in S^X$ be the configuration given by 
	\[
		\e''_z\coloneqq
		\begin{cases}
		 	\e'_z& z\in X'_1, \\
			*& z\in X_1\setminus X'_1,\\
			 \e_z& z\not\in X_1.
		\end{cases}
	\]
	By construction,
 	for any $\xi_1\in\Consv^{\phi_1}(S_1)$, the image of $\xi_1$ in $\C(S)$ satisfies
	\[
		(\xi_1)_{X_1}(\e|_{X_1})=(\xi_1)_{X_1}(\e''|_{X_1}).
	\]
	By applying the irreducibly quantified condition to the configuration space with transition structure
	$(S_1^{X_1},\Phi_{E_1})$  associated to $(S_1,\phi_1)$ on $(X_1,E_1)$
	for $E_1=\{e\in E\mid oe,te\in X_1\}$, we see that the restrictions $\e|_{X_1},\e''|_{X_1}\in S^{X_1}$
	satisfies $\e|_{X_1}\lrs_{\Phi_{E_1}}\e''|_{X_1}$.
	Since the components of $\e$ and $\e''$ coincides outside $X_1$, we see that 
	$\e\lrs_{\Phi_E}\e''$.  
	Next, by construction of $\e''$, by exchanging the components of $\e'$ and $\e''$,
	we may assume that $X_2=\{ x\in X\mid \e'_x\in S_2 \}=\{x\in X\mid \e''_x\in S_2\}\subset X$
	is connected.	
	For any $\xi_2\in\Consv^{\phi_2}(S_2)$, the image in $\C(S)$ satisfies
	\[
		(\xi_2)_{X_2}(\e'|_{X_2})=(\xi_2)_{X_2}(\e''|_{X_2}).
	\]
	By applying the irreducibly quantified condition to 
	$(S_2^{X_2},\Phi_{E_2})$  associated to $(S_2,\phi_2)$ on $(X_2,E_2)$
	for $E_2=\{e\in E\mid oe,te\in X_2\}$, we see that  
	$\e'|_{X_2}\lrs_{\Phi_{E_2}}\e''|_{X_2}$.
	Since the components of $\e'$ and $\e''$ coincides outside $X_2$, we see that 
	$\e'\lrs_{\Phi_E}\e''$.   This proves that 
	$\e\lrs_{\Phi_E}\e'$, hence that
	$(S,\phi)$ is irreducibly quantified
	as desired.
\end{proof}

The box product also preserves the irreducibly quantified condition.

\begin{proposition}\label{prop: BP=IQ}
	Let $(S_1,\phi_1)$ and $(S_2,\phi_2)$ be interactions which are irreducibly quantified.
	Then the box product $(S_1,\phi_1)\square(S_2,\phi_2)$ is also irreducibly quantified.
\end{proposition}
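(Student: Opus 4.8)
The plan is to reduce everything to a single structural identity for the configuration space and then exploit the decomposition of conserved quantities. First, since $(S_1,\phi_1)$ and $(S_2,\phi_2)$ are irreducibly quantified, they are exchangeable by \cref{lem: FI}, so \cref{prop: dimension} applies and gives $\Consv^{\phi_1\square\phi_2}(S_1\times S_2)\cong\Consv^{\phi_1}(S_1)\oplus\Consv^{\phi_2}(S_2)$, under which $\xi$ corresponds to a pair $(\xi_1,\xi_2)$ with $\xi(a,b)=\xi_1(a)+\xi_2(b)$.

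The heart of the proof is a structural identity for the configuration space. Fix a finite connected symmetric directed graph $(X,E)$ and set $S=S_1\times S_2$. Via the projections $\pi_i\colon S\to S_i$, each $\e\in S^X$ yields $\alpha\coloneqq\pi_1\circ\e\in S_1^X$ and $\beta\coloneqq\pi_2\circ\e\in S_2^X$, giving an identification $S^X=S_1^X\times S_2^X$. Writing $\Phi_E^{(i)}$ for the transition structure of $(S_i,\phi_i)$ on $(X,E)$, I would check directly from \cref{def: product} that $(\e,\e')\in\Phi_E$ holds if and only if either $\alpha=\alpha'$ and $(\beta,\beta')\in\Phi_E^{(2)}$, or $\beta=\beta'$ and $(\alpha,\alpha')\in\Phi_E^{(1)}$; that is, $(S^X,\Phi_E)$ is itself the box product $(S_1^X,\Phi_E^{(1)})\square(S_2^X,\Phi_E^{(2)})$. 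This follows by regrouping coordinates: a single-edge transition of $\phi_1\square\phi_2$ at $e\in E$ either leaves all $S_1$-components fixed and moves the $S_2$-components at $oe,te$ via $\phi_2$, or vice versa.

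Granting this identity, the verification of \cref{def: IQ} decouples. Suppose $\e,\e'\in S^X$ satisfy $\xi_X(\e)=\xi_X(\e')$ for every $\xi\in\Consv^{\phi_1\square\phi_2}(S)$. Since $\xi_X(\e)=(\xi_1)_X(\alpha)+(\xi_2)_X(\beta)$, choosing $\xi$ with $\xi_2=0$ and then with $\xi_1=0$ gives $(\xi_1)_X(\alpha)=(\xi_1)_X(\alpha')$ for all $\xi_1\in\Consv^{\phi_1}(S_1)$ and $(\xi_2)_X(\beta)=(\xi_2)_X(\beta')$ for all $\xi_2\in\Consv^{\phi_2}(S_2)$. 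Applying the irreducibly quantified condition for $(S_1,\phi_1)$ on the connected graph $(X,E)$ yields $\alpha\lrs_{\Phi_E^{(1)}}\alpha'$, and similarly $\beta\lrs_{\Phi_E^{(2)}}\beta'$.

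Finally, I would connect $\e$ to $\e'$ in $(S^X,\Phi_E)$ one factor at a time. A path realizing $\alpha\lrs_{\Phi_E^{(1)}}\alpha'$ lifts, keeping $\beta$ fixed, to a $\Phi_E$-path from $(\alpha,\beta)$ to $(\alpha',\beta)$; a path realizing $\beta\lrs_{\Phi_E^{(2)}}\beta'$ lifts, keeping $\alpha'$ fixed, to a $\Phi_E$-path from $(\alpha',\beta)$ to $(\alpha',\beta')$. Concatenating gives $\e\lrs_{\Phi_E}\e'$, which is exactly the irreducibly quantified condition for $\phi_1\square\phi_2$. The principal obstacle is establishing the structural identity that the configuration graph of a box product is the box product of the factor configuration graphs; once this is in place, the decoupling of conserved quantities and the coordinatewise connectivity (the elementary fact that in a box product of graphs $(a,b)$ and $(a',b')$ lie in one component whenever $a\lrs a'$ and $b\lrs b'$) are routine.
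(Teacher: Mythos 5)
Your proposal is correct and follows essentially the same route as the paper's proof: both identify $(S^X,\Phi_E)$ with the box product $(S_1^X,\Phi^1_E)\square(S_2^X,\Phi^2_E)$, decouple the conserved quantities factor-wise, apply the irreducibly quantified hypothesis to each factor, and connect $\e$ to $\e'$ through the intermediate configuration $(\alpha',\beta)$ (the paper's $\e''=(\e'_1,\e_2)$). The only cosmetic difference is that you invoke \cref{lem: FI} and the full isomorphism of \cref{prop: dimension}, whereas only the injection $\Consv^{\phi_1}(S_1)\oplus\Consv^{\phi_2}(S_2)\hookrightarrow\Consv^{\phi_1\square\phi_2}(S_1\times S_2)$ (which needs no exchangeability) is actually used.
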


\begin{proof}
	Let $(S,\phi)=(S_1,\phi_1)\square(S_2,\phi_2)$.
	For any finite connected symmetric graph $(X,E)$, consider the configuration
	space with transition structures $(S^X,\Phi_E)$, $(S_1^X,\Phi^1_E)$, $(S_2^X,\Phi^2_E)$ 
	associated to the interactions $(S,\phi)$, $(S_1,\phi_1)$, $(S_2,\phi_2)$.
	Since $S=S_1\times S_2$, we have $S^X=S_1^X\times S_2^X$ and 
	by construction of the transition structure, the graph
	$(S^X,\Phi_E)$ is identified with the box product of the graphs $(S^X_1,\Phi^1_E)$ and $(S^X_2,\Phi^2_E)$,
	given as
	\[
		\Phi_E=\{((\e_1,\e_2),(\e_1',\e_2'))\mid \text{$\e_1=\e'_1$ and $(\e_2,\e'_2)\in \Phi^2_E$
	or $\e_2=\e'_2$ and $(\e_1,\e'_1)\in \Phi^1_E$}\}.
	\]
	For any $\e,\e'\in S^X$, assume that $\xi_X(\e)=\xi_X(\e')$ for any $\xi\in\C(S)$.
	Hence we can write $\e=(\e_1,\e_2)$ and $\e'=(\e'_1,\e'_2)$ with $\e_1,\e'_1\in S_1^X$
	and $\e_2,\e'_2\in S_2^X$.  From our assumption,
	we see that $(\xi_1)_X(\e_1)=(\xi_1)_X(\e'_1)$ for any $\xi_1\in\Consv^{\phi_1}(S_1)$
	and $(\xi_2)_X(\e_2)=(\xi_2)_X(\e'_2)$ for any $\xi_2\in\Consv^{\phi_2}(S_2)$.
	Since $(S_1,\phi_1)$ and $(S_2,\phi_2)$  are both irreducibly quantified,
	we have $\e_1\lrs_{\Phi^1_E}\e'_1$ and
	$\e_2\lrs_{\Phi^2_E}\e'_2$.
	Thus by definition of the box product, we see that $\e''=(\e'_1,\e_2)\lrs_{\Phi_E}\e=(\e_1,\e_2)$
	and $\e''=(\e'_1,\e_2)\lrs_{\Phi_E}\e'=(\e'_1,\e'_2)$.
	This shows that $\e\lrs_{\Phi_E}\e'$ as desired.
\end{proof}

In \cite{BKS20}*{Proposition 2.9}, the irreducibly quantified condition for various interactions
were proved case-by-case by hand.
Using the wedge sum and the box product, we can systematically prove that 
certain interactions are irreducibly quantified.

\begin{theorem}\label{thm: main}
	Let $\kappa\geq 1$ be an integer.
	The following interactions are irreducibly quantified.
	\begin{enumerate}
		\item The $\kappa$-exclusion interaction $\phikEX$ of \cref{def: GE},
		\item The multi-species exclusion interaction $\phi^\kappa_\ms$ of \cref{def: MS}, 
		\item The lattice gas with energy interaction $\phikLGE$ of \cref{def: LGE},
		\item The $N$-lane $\kappa$-exclusion interaction $\phi^{\square N}_{\text{$\kappa$-$\EX$}}$ of \cref{def: RP}.
	\end{enumerate}
\end{theorem}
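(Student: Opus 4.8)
The plan is to deduce statements (2), (3), and (4) from statement (1), using the structural decompositions of these interactions together with the fact that the irreducibly quantified condition is preserved by wedge sums (\cref{prop: WS=IQ}) and box products (\cref{prop: BP=IQ}). First I would record the (essentially tautological) observation that the irreducibly quantified condition is invariant under \emph{isomorphism} of interactions: an isomorphism carries the associated graph, and hence every configuration space with transition structure $(S^X,\Phi_E)$, bijectively onto the corresponding object for the isomorphic interaction, while identifying the spaces of conserved quantities; thus both the connectivity of configuration spaces and the values of conserved quantities correspond, and the defining property in \cref{def: IQ} transfers. This is consistent with the remark above that the condition is \emph{not} preserved by mere equivalence.

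Granting (1), the remaining cases follow quickly. For (2), the remark following \cref{prop: WS} gives an isomorphism $\phi^\kappa_\ms\cong\vee^\kappa\phiEX$; since $\phiEX=\phioEX$ is irreducibly quantified by (1), induction on $\kappa$ via \cref{prop: WS=IQ} shows $\vee^\kappa\phiEX$ is irreducibly quantified, and isomorphism-invariance yields the claim for $\phi^\kappa_\ms$. For (3), by \cref{def: LGE} we have $\phikLGE=\phiEX\vee\phikmEX$; both summands are $\kappa'$-exclusion interactions (with $\kappa'=1$ and $\kappa'=\kappa-1$, the degenerate case $\kappa'=0$ on a one-point state being trivially irreducibly quantified), hence irreducibly quantified by (1), and \cref{prop: WS=IQ} finishes the case. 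For (4), \cref{def: RP} writes $\phi^{\square N}_{\text{$\kappa$-$\EX$}}$ as the $N$-fold box product of $\phikEX$, so by (1) and induction on $N$ using \cref{prop: BP=IQ} it is irreducibly quantified.

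It therefore remains to prove (1). Fix a finite connected graph $(X,E)$ and recall that the only conserved quantity of $\phikEX$ is a multiple of $\xi(j)=j$, so that $\xi_X(\e)=\sum_{x\in X}\e_x$ is the total particle number; the hypothesis of \cref{def: IQ} is that $\e,\e'\in S_\kappa^X$ carry the same total number of particles, while a transition along an edge moves one particle across that edge, subject to the occupancy bounds $0\le\e_x\le\kappa$. The heart of the argument is a \textbf{transport lemma}: if $\e_a\ge 1$ and $\e_c<\kappa$ for distinct $a,c\in X$, then $\e$ is connected in $(S_\kappa^X,\Phi_E)$ to the configuration obtained from $\e$ by decreasing the $a$-coordinate by one and increasing the $c$-coordinate by one, all other coordinates unchanged. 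Given this, connectivity is immediate: if $\e\neq\e'$ have equal totals, then some site $a$ has $\e_a>\e'_a$ (so $\e_a\ge 1$) and some site $c$ has $\e_c<\e'_c\le\kappa$ (so $\e_c<\kappa$); transporting one particle from $a$ to $c$ strictly decreases $\sum_{x}\abs{\e_x-\e'_x}$ by $2$, and iterating drives this nonnegative integer to $0$, i.e.\ to $\e=\e'$.

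The main obstacle is the transport lemma itself, whose difficulty is \emph{congestion}: to move a particle toward $c$ one may have to traverse sites that are already full. I would prove it by induction on the length of a simple path $a=v_0,v_1,\dots,v_k=c$ in $(X,E)$. The length-one case is a single transition across the edge from $v_0$ to $v_1$. For $k>1$, if $\e_{v_{k-1}}<\kappa$, one transports a particle from $v_0$ to $v_{k-1}$ by the inductive hypothesis and then applies the transition along $(v_{k-1},v_k)$; if instead $\e_{v_{k-1}}=\kappa$, one first applies the transition along $(v_{k-1},v_k)$ to create room at $v_{k-1}$, then transports a particle from $v_0$ to $v_{k-1}$ by the inductive hypothesis, restoring $v_{k-1}$ to capacity. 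In both cases the net effect is to move a single particle from $v_0$ to $v_k$ with all intermediate occupancies returned to their original values (note that $v_0$ is untouched until the final inductive transport, so it still satisfies $\e_{v_0}\ge 1$ at that stage). This completes the induction, hence the transport lemma, and with it the proof of (1).
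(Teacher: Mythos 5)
Your proposal is correct, and while your treatment of (2)--(4) mirrors the paper's (wedge sums via \cref{prop: WS=IQ}, box products via \cref{prop: BP=IQ}, with induction on $\kappa$ and $N$), your proof of the core case (1) takes a genuinely different route. The paper argues by induction on the size of the discrepancy set $\Delta_{\e,\e'}=\{x\in X\mid \e_x\neq\e'_x\}$: it picks sites $x,y$ with $\e_x>\e'_x$ and $\e_y<\e'_y$, invokes \cref{lem: exchangeable} to reshuffle the configurations so that $x$ and $y$ sit on a common edge, and then performs a bulk transfer $(\e_x,\e_y)\lrs_{\phikEX}(\e'_x,\e_y+(\e_x-\e'_x))$ to remove $x$ from the discrepancy set. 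Your argument instead never invokes exchangeability: your transport lemma moves a \emph{single} particle along a simple path, with the two-case induction on path length handling congestion at full sites, and you then induct on the $\ell^1$-distance $\sum_x\abs{\e_x-\e'_x}$, which drops by exactly $2$ per transport. Each approach buys something. The paper's is shorter because it reuses the reshuffling machinery already developed for \cref{prop: WS=IQ}; yours is more elementary and self-contained, and it is arguably more robust: since you move one particle at a time, every intermediate state automatically respects the occupancy bound $0\le\e_x\le\kappa$, whereas the paper's bulk transfer requires $\e_y+(\e_x-\e'_x)\le\kappa$, which can fail for an arbitrary choice of $y$ (e.g.\ $\kappa=2$, $(\e_x,\e_y)=(2,1)$, $(\e'_x,\e'_y)=(0,2)$) and so implicitly needs a more careful choice of target site or a smaller increment. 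You also make explicit a point the paper leaves implicit: the irreducibly quantified condition is invariant under \emph{isomorphism} (though not under mere equivalence), which is what licenses transferring the conclusion from $\vee^\kappa\phiEX$ and $\phiEX\vee\phikmEX$ to $\phi^\kappa_\ms$ and $\phikLGE$, and your handling of the degenerate summand $\kappa'=0$ in case (3) when $\kappa=1$ is a small completeness gain over the paper's text.
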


\begin{proof}
	(1) We first prove the case of the $\kappa$-exclusion interaction.
	The proof is same as that 
	given in \cite{BKS20}*{Proposition 2.19 (2)} for the case of interactions arising from maps.  
	We let $\xi\colon S\rightarrow\R$ be the function given by $\xi(j)=j$ for any $j\in S$.
	Then $\xi$ is a basis of the one-dimensional space $\Consv^{\phikEX}(S)$.
	Let $(X,E)$ be a finite symmetric connected graph.
	Let $\e,\e'\in S^X$ such that $\xi_X(\e)=\xi_X(\e')$.
	Let $\Delta_{\e,\e'}\coloneqq\{x\in X\mid \e_x\neq\e'_x\}$.  If $\abs{\Delta_{\e,\e'}}=0$, 
	then $\e=\e'$ and there is nothing to prove.
	Suppose $\abs{\Delta_{\e,\e'}}\geq 1$, and assume by induction that for any
	configurations $\e''$
	such that $\abs{\Delta_{\e'',\e'}}< \abs{\Delta_{\e,\e'}}$ and $\xi_X(\e')=\xi_X(\e'')$, we have $\e'\lrs_{\Phi_E}\e''$.
	Let $x\in\Delta_{\e,\e'}$.  
	By exchanging the roles of $\e$ and $\e'$ if necessary, we can assume that $\e _x > \e'_x$.
	Since $\xi_X(\e)=\xi_X(\e')$ and $\xi(\e_x)=\e_x\neq\e'_x=\xi(\e'_x)$, there exists $y\in\Delta_{\e,\e'}$
	such that $\e_y<\e'_y$.   
	Fix an edge $e=(oe,te)\in E$.  
	Note that the interaction $\phikEX$ is exchangeable.
	By reshuffling the components of $\e$ and $\e'$
	using \cref{lem: exchangeable}, we can assume that $x=oe$ and $y=te$.
	We denote by $\e''$ the configuration obtained from $\e$ by 
	replacing the $x$ and $y$ components $(\e_x,\e_y)$ with $(\e'_x,\e_y+(\e_x-\e'_x))$.
	Note that for the $\kappa$-exclusion $\phikEX$, we have 
	$(\e_x,\e_y)\lrs_{\phikEX}(\e'_x,\e_y+(\e_x-\e'_x))=(\e''_x,\e''_y)$. 
	This shows that $\e\lrs_{\Phi_E}\e''$.
	Moreover, by construction, $\abs{\Delta_{\e'',\e'}}\leq\abs{\Delta_{\e,\e'}}-1<\abs{\Delta_{\e,\e'}}$.
	Hence by the induction hypothesis, we see that $\e'\lrs_{\Phi_E}\e''$.
	This proves that $\e\lrs_{\Phi_E}\e'$  as desired.
	In particular, the exclusion interaction $\phiEX=\phioEX$ is irreducibly quantified.\\
	(2)
	Since the multi-species exclusion interaction $\phi^\kappa_\ms$ of \cref{def: MS}
	is obtained as the wedge sum $\phi^\kappa_\ms=\vee^{\kappa}\phiEX$ of the
	exclusion interaction, it is irreducibly quantified by \cref{prop: WS=IQ}.\\
	(3)
	Since the lattice gas with energy interaction $\phikLGE$ of \cref{def: LGE}
	is obtained as the wedge sum $\phikLGE=\phiEX\vee\phikmEX$ of the
	exclusion interaction and the $(\kappa-1)$-exclusion interaction, 
	it is irreducibly quantified by \cref{prop: WS=IQ}.\\
	(4)
	Since the $N$-lane $\kappa$-exclusion interaction $\phi^{\square N}_{\text{$\kappa$-$\EX$}}$ of \cref{def: RP}
	is obtained as the box product $\phi^{\square N}_{\text{$\kappa$-$\EX$}}=\phikEX\square\cdots\square\phikEX$ of the
	$\kappa$-exclusion interaction $\phikEX$,
	it is irreducibly quantified by \cref{prop: BP=IQ}.
\end{proof}

We will prove in \cref{appendix: A} that the separable interaction of \cref{cor: 3} (5) is irreducibly quantified.

\appendix
%
%
\section{A New Interaction}\label{appendix: A}
%
%
%

In this appendix, we give two physical interpretations of the New Interaction of \cref{cor: 3} (5), which we discovered from the classification result of interactions for $S=\{0,1,2,3\}$. We will also prove that this new interaction is irreducibly quantified.

The first interpretation is that the new interaction is a variant of the Lattice Gas with Energy
interaction
$\phitLGE$ given in \cref{cor: 3} (2).  See \cref{Fig: 18} for the associated graph with this interpretation.
Since $\phitLGE=\phiEX\vee \phitwEX$, it is a combination of the exclusion
process and the $2$-exclusion process, and can be interpreted as a system with particles
(the exclusion interaction $\phiEX$) with extra energy (the $2$-exclusion interaction
$\phitwEX$).
The extra energy can move between particles on adjacent sites, but can not move to
a vacant site.  See \cref{Fig: 15} for the associated graph for the Lattice Gas with Energy on $S=\{0,1,2,3\}$ with this interpretation. 

\begin{figure}[htbp]
	\begin{center}
		\includegraphics[width=0.8\linewidth]{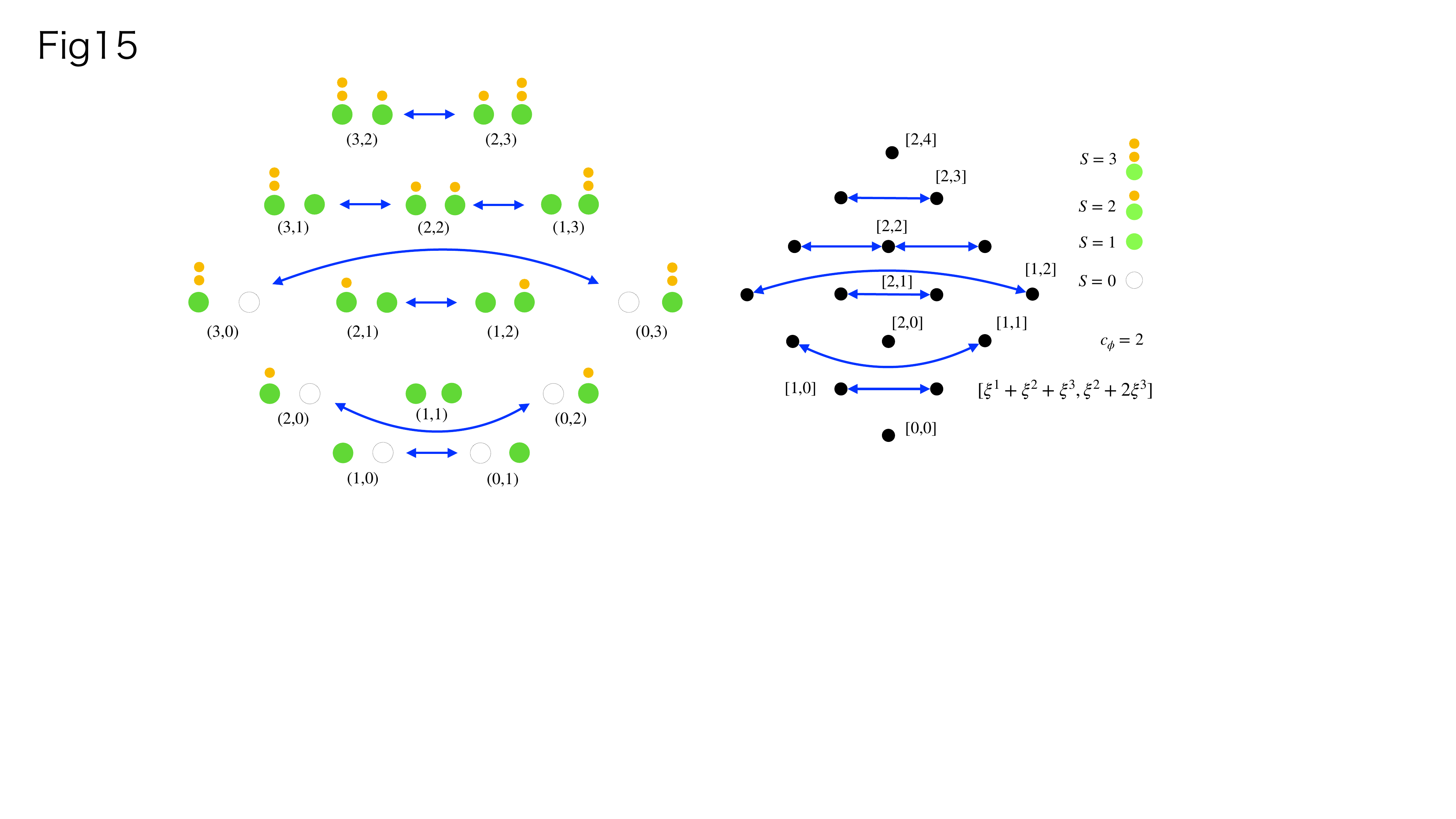}
		\caption{The figure represents the Lattice Gas with Energy on $S=\{0,1,2,3\}$.
		One can interpret the states as either empty ($=0$) or occupied by a single particle
		with no energy ($=1$), a particle with \emph{one} energy ($=2$),
		and a particle with \emph{two} energy ($=3$).
		The conserved quantity $\xi^1+\xi^2+\xi^3$ returns the total number of particles
		and  $\xi^2+2\xi^3$ returns the total energy.}
		\label{Fig: 15}
	\end{center}
\end{figure}

The new interaction of \cref{cor: 3} (5) can be interpreted as the Lattice Gas with Energy
with an additional transformation such that two energy on a single particle adjacent to a vacant site 
can be converted to single particles with no energy.
In other words, two energy is equivalent to a particle with no energy.
The conserved quantity $2\xi^1+3\xi^2+4\xi^3$ gives the total energy of the site,
where a particle is interpreted to have \emph{two} intrinsic energy (see \cref{Fig: 18}), and the total number of particles is no longer conserved. 

\begin{figure}[htbp]
	\begin{center}
		\includegraphics[width=0.8\linewidth]{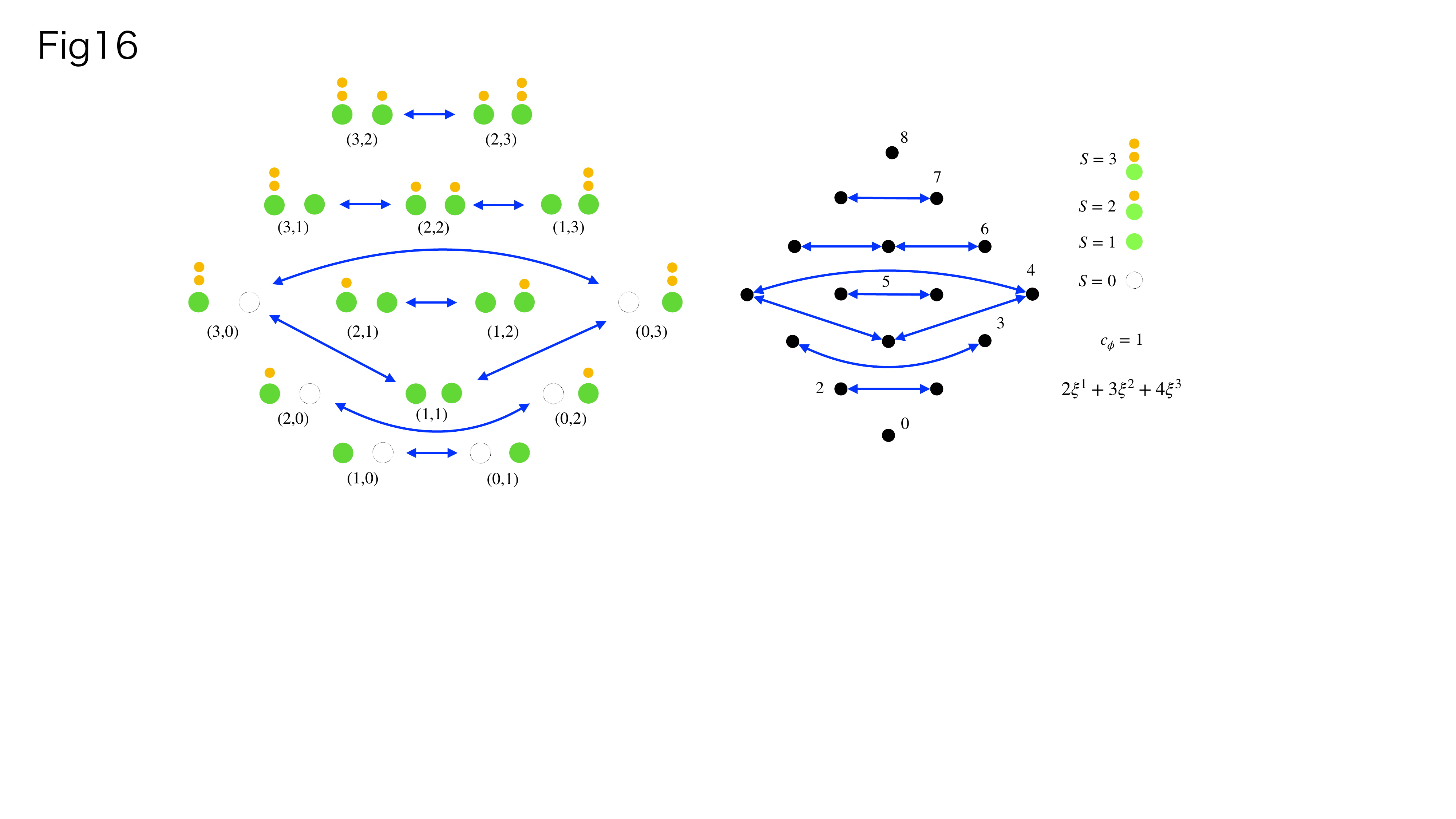}
		\caption{The figure represents the New Interaction.   It can be interpreted as the Lattice Gas with Energy
		with the additional transformation that two extra energy on a single particle adjacent to a vacant site 
		can create a single particle with no extra energy on the vacant site.}\label{Fig: 18}
	\end{center}
\end{figure}

\begin{proposition}
	The new interaction $(S,\phi)$ of \cref{cor: 3} (5) on $S=\{0,1,2,3\}$ is irreducibly quantified.
\end{proposition}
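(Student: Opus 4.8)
The plan is to use that $\cp=1$, so $\C(S)$ is spanned by the single conserved quantity $\xi\coloneqq 2\xi^1+3\xi^2+4\xi^3$, whose values $\xi(0)=0,\ \xi(1)=2,\ \xi(2)=3,\ \xi(3)=4$ are pairwise distinct. Injectivity of $\xi$ gives separability at once and disposes of the trivial graph $\abs X=1$. For the general case I first note that $\phi$ contains the multi-species interaction $\phi^3_\ms$ and is therefore exchangeable, so by \cref{lem: exchangeable} I may permute the components of any configuration $\e\in S^X$ without changing its connected component in $(S^X,\Phi_E)$. Hence the connected component of $\e$ depends only on its count vector $(n_0,n_1,n_2,n_3)$, where $n_i\coloneqq\abs{\{x\in X: \e_x=i\}}$, and the whole problem descends to a question about count vectors.

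Beyond the transpositions (which fix the multiset of states), the only edges of $\phi$ that change it are $(2,2)\lra(1,3)$ and $(1,1)\lra(0,3)$. Using exchangeability to slide two sites carrying the required states onto a fixed edge of the connected graph (which exists once $\abs X\geq 2$), these realize the two count-vector moves $(A)\colon (n_0,n_1,n_2,n_3)\mapsto(n_0,n_1{+}1,n_2{-}2,n_3{+}1)$, available when $n_2\geq 2$, and $(B)\colon (n_0,n_1,n_2,n_3)\mapsto(n_0{+}1,n_1{-}2,n_2,n_3{+}1)$, available when $n_1\geq 2$. Both preserve $n\coloneqq\sum_i n_i=\abs X$ and $T\coloneqq 2n_1+3n_2+4n_3=\xi_X(\e)$, as they must.

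The heart of the argument is a reduction to a canonical form. Starting from any count vector, I apply $(A)$ until $n_2\in\{0,1\}$ and then $(B)$ until $n_1\in\{0,1\}$; note $(B)$ does not disturb $n_2$. Each phase strictly decreases the targeted count and only ever increases $n_0,n_1,n_3$, so positivity is never violated---this is the single point at which the argument could fail, and it is exactly why I reduce using only the forward directions of $(A)$ and $(B)$. The terminal vector has $n_1,n_2\in\{0,1\}$, and as $(n_1,n_2)$ ranges over $\{0,1\}^2$ the residues $2n_1+3n_2\bmod 4$ take the values $0,2,3,1$, all distinct; thus $T\bmod 4$ determines $(n_1,n_2)$, after which $n_3=(T-2n_1-3n_2)/4$ and $n_0=n-n_1-n_2-n_3$ are forced. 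The canonical form therefore depends only on $(n,T)$.

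Finally, if $\e,\e'\in S^X$ satisfy $\xi_X(\e)=\xi_X(\e')$ then they share the same $n=\abs X$ and the same $T$, hence reduce to the same canonical form; since every move used is a transition in the symmetric graph $(S^X,\Phi_E)$, concatenating the reduction of $\e$ with the reverse reduction of $\e'$ gives $\e\lrs_{\Phi_E}\e'$. This shows $(S,\phi)$ is irreducibly quantified. The only genuine computation is the canonical-form uniqueness via $T\bmod 4$; the obstacle I would watch for, namely keeping all $n_i\geq 0$ throughout, does not arise because the two reducing moves are monotone in the coordinates that matter.
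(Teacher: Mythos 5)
Your proof is correct, but it takes a genuinely different route from the paper's. The paper's proof is modular: it sets $\wt\xi^1=\xi^1+\xi^2+\xi^3$ (particle number) and $\wt\xi^2=\xi^2+2\xi^3$ (energy), inducts on the discrepancy $\Delta_1=\wt\xi^1_X(\e')-\wt\xi^1_X(\e)$, and in the base case $\Delta_1=0$ invokes \cref{thm: main} (3) --- the lattice gas with energy is irreducibly quantified, which itself rests on the wedge-sum theorem \cref{prop: WS=IQ} --- so that the new edge $(1,1)\lra(0,3)$ is used only to bridge configurations with different particle numbers. You never invoke the irreducible quantifiedness of $\phitLGE$ at all; instead you reduce everything to a self-contained rewriting argument on count vectors $(n_0,n_1,n_2,n_3)$, with the two multiset-changing moves $\{2,2\}\lra\{1,3\}$ and $\{1,1\}\lra\{0,3\}$, a monotone reduction to the normal form $n_1,n_2\in\{0,1\}$, and uniqueness of that normal form via the residue $T\bmod 4$. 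What your approach buys is elementarity and an explicit canonical form with a concrete arithmetic certificate; what the paper's approach buys is brevity given the existing machinery and a pattern that scales (``known irreducibly quantified interaction plus one extra edge'') without having to enumerate all non-transposition edges. That enumeration is the one delicate point in your argument, and you got it right: the non-transposition edges of $\phitLGE$ coming from the wedge-sum construction of \cref{def: LGE} are $(2,2)\lra(1,3)$ and $(3,1)\lra(2,2)$, which induce the same unordered move, so your moves $(A)$ and $(B)$ exhaust the possibilities. (Note that the edge list displayed in \cref{lem: LGE}, read literally with $k=0$ allowed, would also include particle-creating edges such as $(2,0)\lra(1,1)$, which would violate conservation of $\xi_P$; the condition there should be read as $1\leq k<\kappa$, consistent with your move set.)
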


\begin{proof}
	Let $\phi'\coloneqq\phitEX$ be the lattice gas with energy
	so that $\Cd(S)$ is spanned by $\wt\xi^1\coloneqq\xi^1+\xi^2+\xi^3$ and $\wt\xi^2\coloneqq\xi^2+2\xi^3$.
	The new interaction $(S,\phi)$ is obtained from $(S,\phi')$ by connecting $(1,1)$ with $(0,3)$.
	The interaction $(S,\phi)$ is exchangeable.
	By construction, $\phi'\subset\phi$.
	We have $\cp=1$, and $\C(S)$ is spanned by the conserved quantity 
	$\xi\coloneqq2\wt\xi^1+\wt\xi^2=2\xi^1+3\xi^2+4\xi^3$.
	
	For any finite symmetric graph $(X,E)$, let $(S^X,\Phi_E)$
	and $(S^X,\Phi'_E)$
	be the configuration space with
	transition structure associated to the interactions $(S,\phi)$
	and $(S,\phi')$.  For any $\e,\e'\in S^X$, assume that $\xi_X(\e)=\xi_X(\e')$.
	Let $\Delta_1\coloneqq\wt\xi^1_X(\e')-\wt\xi^1_X(\e)$
	and $\Delta_2\coloneqq\wt\xi^2_X(\e)-\wt\xi^2_X(\e')$.
	Then $\Delta_2=2\Delta_1$.  By exchanging the roles of $\e$ and $\e'$ if necessary,
	we assume that $\Delta_1,\Delta_2\geq 0$.
	We prove by induction on $\Delta_1$ that $\e\lrs_{\Phi_E}\e'$.
	If $\Delta_1=0$, then we have $\wt\xi^1_X(\e)=\wt\xi^1_X(\e')$ and $\wt\xi^2_X(\e)=\wt\xi^2_X(\e')$.
	By \cref{thm: main} (3), the interaction
	$(S,\phi')$ is irreducibly quantified.  Hence we have $\e\lrs_{\Phi'_E}\e'$,
	which implies that $\e\lrs_{\Phi_E}\e'$ since $\phi'\subset\phi$.
	Now suppose $\Delta_1>0$, and
	assume that $\hat\e\lrs_{\Phi_E}\e'$ for any $\hat\e\in S^X$
	such that $\xi_X(\hat\e)=\xi_X(\e')$ and $\wt\xi^1_X(\e')-\wt\xi^1_X(\hat\e)<\Delta_1$.
	Since $(S,\phi)$ is exchangeable, we can reorder $\e$ and $\e'$ by \cref{lem: exchange} so that 
	there exists connected subsets $X_1 \subsetneq X'_1\subset X$ such that 
	$\wt\xi^1(\e'_x)=1$ if and only if $x\in X_1'$ and 
	$\wt\xi^1(\e_x)=1$ if and only if $x\in X_1$.
	If $\wt\xi^1_X(\e)=0$, then $\wt\xi^2_X(\e)=0$,
	hence $\xi_X(\e)=\xi_X(\e')=0$, which would imply that $\wt\xi^1_X(\e')=0$.
	This would contradict our assumption that $\Delta_1>0$.
	Hence $\wt\xi^1_X(\e)>0$, so that $X_1\neq\emptyset$.
	Let $e=(oe,te)\in E$ such that $oe\in X_1$ and $te\in X'_1\setminus X_1$.
	Since $\wt\xi^2_X(\e)\geq\Delta_2\geq 2$,
	we can take $\e''\in S^X$ such that $\e\lrs_{\Phi'_E}\e''$,
	$\wt\xi^1(\e''_x)=1$ if and only if $x\in X_1$, and 
	$\e''_{oe}=3$.
	Note $(\e''_{oe},\e''_{te})=(3,0)$.
	  Since $(3,0)\leftrightarrow_{\phi}(1,1)$,
	if we let $\hat\e=(\hat\e_x)\in S^X$ such that $\hat\e_x=\e''_x$ for
	$x\neq oe,te$ and $(\hat\e_{oe},\hat\e_{te})=(1,1)$,
	then $\e''\lra_{\Phi_E}\hat\e$.
	By construction, $\wt\xi^1_X(\e')-\wt\xi^1_X(\hat\e)<\Delta_1$.
	Hence by the induction hypothesis, we have $\hat\e\lrs_{\Phi_E}\e'$.
	This gives $\e\lrs_{\Phi_E}\e'$ as desired.
\end{proof}

\begin{figure}[htbp]
	\begin{center}
		\includegraphics[width=0.8\linewidth]{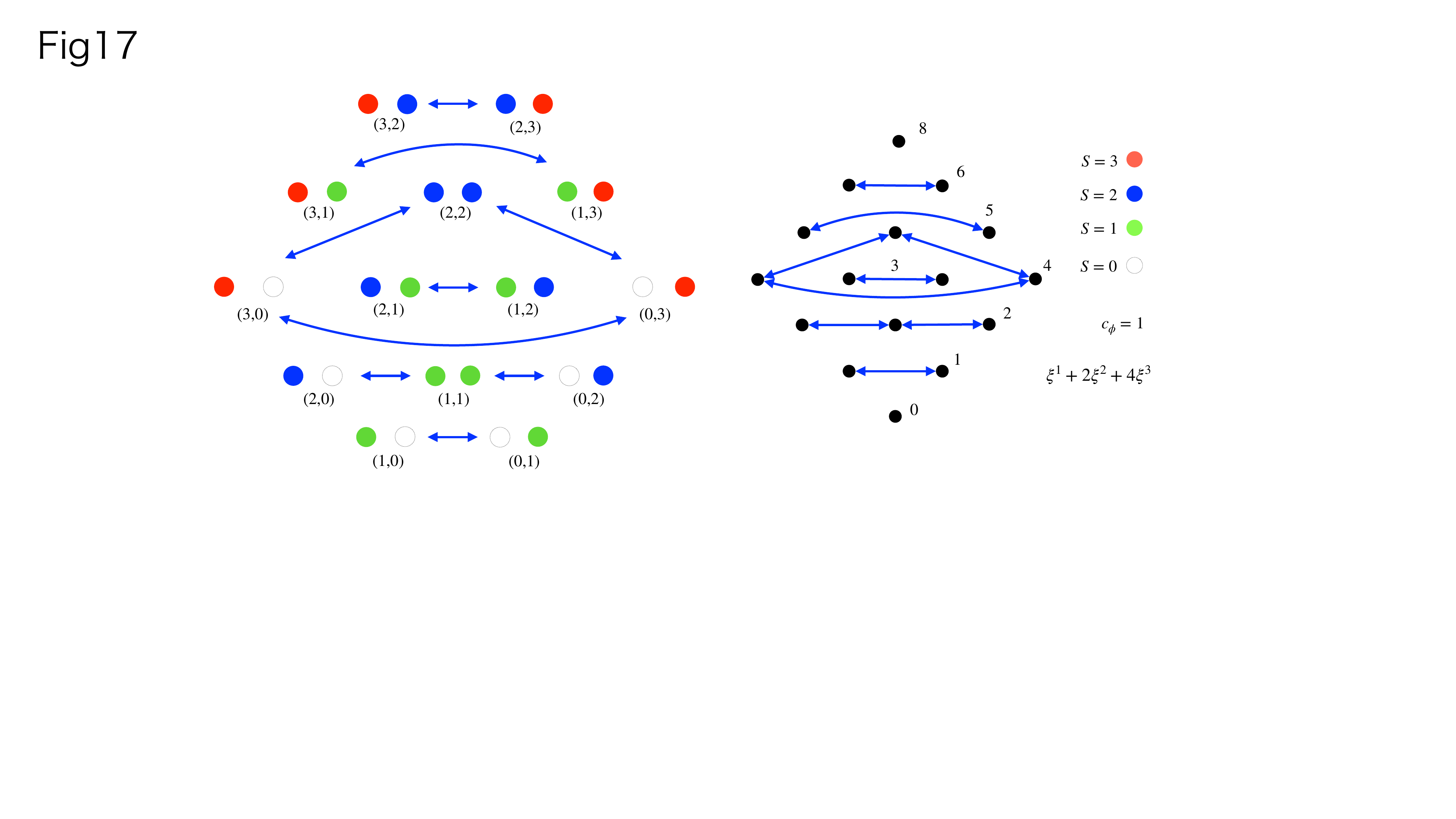}
		\caption{The figure represents an interaction which is isomorphic to the New Interaction
		 of \cref{cor: 3} (5).}
		\label{Fig: 19}
	\end{center}
\end{figure}

The second interpretation of the New Interaction of \cref{cor: 3} (5), obtained by exchanging $1$ and $2$ in
$S=\{0,1,2,3\}$, is as follows. One can interpret the states as either empty ($=0$), 
or occupied by a single green (=1), blue  (=2) and red  (=3) particles.  Two green particles can combine to make a single blue particle, and two blue particles can combine to make a single red particle.
We can interpret the green, blue and red particles having energy $1$, $2$ and $4$, and the conserved
quantity $\xi^1+2\xi^2+4\xi^3$ returns the total energy of the system.

\subsection*{Acknowledgement}
The authors would like to thank Kyosuke Adachi for introducing MIPS to the authors.
One of the motivations for considering the interaction as a graph was to extend the results
of \cite{BKS20} to the MIPS case. The authors thank 
Yusuke Komiya for his Master's thesis project
and Rapha\"el Giavarini for his internship at Keio University, which helped to propel
the classification
of the interactions.  The authors also thank members of the Hydrodynamic Limit Seminar
at Keio/RIKEN, especially Fuyuta Komura, Hayate Suda, Hiroko Sekisaka and Kazuna Kanegae 
for discussion and their continual support for this project.

\begin{bibdiv}
	\begin{biblist}
		\bibselect{Bibliography}
	\end{biblist}
\end{bibdiv}

\end{document}